\numberwithin{equation}{section}
\newtheorem{theo}{Theorem}[section]
\newtheorem{coro}[theo]{Corollary}
\newtheorem{lemm}[theo]{Lemma}
\newtheorem{prop}[theo]{Proposition}
\newtheorem{defi}[theo]{Definition}
\newtheorem{rema}[theo]{Remark}
\newtheorem{exam}[theo]{Example}
\newenvironment{proof}{\noindent \textbf{{Proof.}} \sf}
\def\qed{\hfill $\diamond$ \bigskip}
\def\C{{\mathcal C}}
\def\H{{\mathcal H}}
\def\lim{\mathop{\rm lim}\nolimits}
\def\Ext{\mathsf{Ext}}
\def\Hom{\mathsf{Hom}}
\def\Tor{\mathsf{Tor}}
\def\Ker{\mathsf{Ker}}
\def\Coker{\mathsf{Coker}}
\def\Im{\mathsf{Im}}
\def\dim{\mathsf{dim}}
\def\la{\Lambda}
\begin{document}
\sf

\title{Strongly stratifying ideals, Morita contexts and Hochschild homology}
\author{Claude Cibils,  Marcelo Lanzilotta, Eduardo N. Marcos,\\and Andrea Solotar
\thanks{\footnotesize This work has been supported by the projects  UBACYT 20020170100613BA, PIP-CONICET 11220200101855CO, USP-COFECUB.
The third mentioned author was supported by the thematic project of FAPESP 2014/09310-5, research grants from CNPq 302003/2018-5 and  310651/2022-0  and acknowledges support from the ``Brazilian-French Network in Mathematics". The fourth mentioned author is a research member of CONICET (Argentina), Senior Associate at ICTP and visiting Professor at Guangdong Technion-Israel Institute of Technology.}}

\date{}
\maketitle
\begin{abstract}

We consider stratifying ideals of finite dimensional algebras in relation with Morita contexts. A Morita context is an algebra built on a data consisting of two algebras, two bimodules and two morphisms.  For a strongly stratifying Morita context - or equivalently for a strongly stratifying ideal - we show  that Han's conjecture holds if and only if it holds for the diagonal subalgebra. The main tool is the Jacobi-Zariski long exact sequence. One of the main consequences is that Han's conjecture holds for an algebra admitting a strongly (co-)stratifying chain whose steps verify Han's conjecture.

If Han's conjecture is true for local algebras and an algebra $\Lambda$ admits a primitive strongly (co-)stratifying chain, then Han's conjecture holds for $\Lambda$.

\end{abstract}

\noindent 2020 MSC: 18G25, 16E40, 16D20, 16D25, 16E30, 16E35

\noindent \textbf{Keywords:} Hochschild, homology, relative, Han, quiver, recollement, stratifying, Morita context.


\section{\sf Introduction}

In this paper we consider finite dimensional associative algebras over a field $k$, which we call \textit{algebras} for short. We study Morita contexts  in relation with Han's conjecture, more details are given below. The intention of this paper is to reduce the analysis of Han's conjecture on an algebra to an easier algebra where Hochschild homology is more manageable.

 A Morita context \cite{BASS,JACOBSON} is  a matrix algebra built on two ``diagonal" algebras $A$ and $B$, two bimodules $M$ and $N$ and two bimodule maps $\alpha$ and $\beta$ verifying natural conditions equivalent to associativity of the product of the matrix algebra - see Definition \ref{def MC}.

One of the main results that this paper relies on is that algebras with a distinguished idempotent are essentially the same that Morita contexts, see for instance  \cite{BUCHWEITZ,GREEN PSAROUDAKIS}. For later use, we start Section \ref{clear} by recalling this well known fact in a categorical framework.

Stratifying ideals are defined by E. Cline, B. Parshall and L. Scott in \cite{CLINE PARSHALL SCOTT} and further studied in \cite{ANGELERI KOENIG LIU YANG,HAN2014,HERMANN2016}. We show that for Morita contexts the definition of a stratifying ideal  of an algebra  generated by an idempotent translates into the conditions $\Tor_n^A(M,N)=0$ for $n>0$ and $\beta$  injective. See \cite[Proposition 4.1]{GAO PSAROUDAKIS}.

Recollements are introduced by A. A. Beilinson, J. Bernstein and P. Deligne in \cite{BEILINSON BERNSTEIN DELIGNE}. Let  $\Lambda e\Lambda$ be a stratifying ideal of an algebra $\Lambda$, where $e$ is an idempotent of $\Lambda$. This gives us a recollement of the unbounded derived category $D(\Lambda)$ of complexes of $\Lambda$-modules relative to the unbounded derived categories $D(\Lambda/\Lambda e \Lambda)$ and $D(e\Lambda e)$. See \cite{CLINE PARSHALL SCOTT,HAN2014}.

In Section \ref{clear} we also introduce strongly stratifying ideals generated by an idempotent. For Morita contexts, this corresponds to $\Tor_n^A(M,N)=0$ for all $n$, and $\Tor_n^B(N,M)=0$ for $n>0$.

The Hochschild homology $HH_*(\Lambda)$ of an algebra $\Lambda$ (see \cite{HOCHSCHILD1945}, \cite{WEIBEL}, \cite{WITHERSPOON}) is called \textit{finite} if $HH_*(\Lambda)=0$ for $*>N$ for some $N$. A main purpose of this paper is to study under which circumstances the Hochschild homology of a Morita context being finite implies the Hochschild homology of its diagonal algebra is also finite. Our motivation is to attack Han's conjecture \cite{HAN2006}, which states that if an algebra has finite Hochschild homology, then it should have finite global dimension. For results in this direction, see for instance \cite{AVRAMOVVIGUE,  BERGHERDMANN, BERGHMADSEN2009, BERGHMADSEN2017, BACH, CLMS Pacific, CLMS2022bounded, CIBILSREDONDOSOLOTAR, SOLOTARSUAREZVIVAS, SOLOTARVIGUE}. Note that if Han's conjecture is true, then the following dichotomy holds: either $HH_*(\Lambda)$ is infinite or $HH_*(\Lambda)=0$ for $*>0$. See \cite{HAN2006}.

Previous results by  L. Angeleri H\"{u}gel, S. Koenig, Q. Liu and D. Yang in \cite{ANGELERI KOENIG LIU YANG 2017 472}  provides a motivation for approaching Han's conjecture through recollements. Roughly, the finiteness of the projective dimension is preserved for an idempotent which gives a stratifying ideal, through the recollement of the derived category. Moreover B. Keller in \cite{KELLER1998} and Y. Han in \cite{HAN2014} showed that in the same situation there is a long exact sequence relating the Hochschild homologies of the algebras involved.

In Section \ref{JJJZZ} we adjust and extend the Jacobi-Zariski long nearly exact sequence obtained in \cite{CLMS2021nearlyexactJZ}. The additional hypothesis for fitting  Theorem 4.2 of  \cite{CLMS2021nearlyexactJZ} is given in (\ref{missing}). This sequence links Hochschild homology of $\Lambda$ to the relative one with respect to a subalgebra introduced by G. Hochschild in \cite{HOCHSCHILD1956}. With this adjustement, we confirm our previous results in \cite{CLMS2021nearlyexactJZ} and the earlier Jacobi-Zariski long exact sequence of A. Kaygun in \cite{KAYGUN}.

Let  $\Lambda$ be an algebra with a distinguished idempotent $e$ satisfying that $\Lambda e \Lambda$ is a strongly stratifying ideal, and let $f=1-e$.
Using the previously  described tools, in Section \ref{HH of strongly str MC} we obtain the key result of this paper, that is Theorem \ref{key}: if $\Lambda$ has finite Hochschild homology, then the same holds for $e\Lambda e$ and $f\Lambda f$. In Section \ref{HAN}, we prove our main result:  $\Lambda$ verifies  Han's conjecture if and only if its subalgebra $e\Lambda e\times f\Lambda f$ does.

We next consider algebras admitting a strongly stratifying chain, that is those algebras  with an ordered complete system of orthogonal idempotents such that the successive quotients of the induced filtration by ideals are strongly stratifying in the corresponding algebra. We obtain the following interesting consequence of our key result Theorem \ref{key}.  Let $\C$ be a class of algebras verifying Han's conjecture which is closed by taking quotients. If an algebra $\la$ admits a strongly stratifying chain $\{e_1,\dots, e_n\}$ such that all the algebras $e_i\Lambda e_i$ belong to $\C$, then Han's conjecture is true for $\la$.

To avoid classes of algebras closed by taking quotients, we filter instead an algebra $\la$ by algebras $f\la f$ where the $f$'s are partial decreasing sums of a complete system of orthogonal idempotents $\{e_1,\dots, e_n\}$. We consider strongly co-stratifying chains, that is the $f$'s provide ideals which are strongly stratifying in the next algebra.  We infer from Theorem \ref{key} another main result: if an algebra $\la$ admits a strongly co-stratifying chain $\{e_1,\dots, e_n\}$ such that all the algebras $e_i\Lambda e_i$ verify Han's conjecture, then Han's conjecture is true for $\la$.

In particular if Han's conjecture is true for local algebras and if an algebra admits a primitive strongly (co-)stratifying chain, then Han's conjecture is true for this algebra.

Those algebras admitting a strongly stratifying or co-stratifying chain will be compared with standardly stratified algebras (see for instance \cite{AGOSTON DLAB LUKACS, AGOSTON HAPPEL LUKACS UNGER, MARCOS MENDOZA SAENZ SANTIAGO, XI}) in a forthcoming paper.

In the last section, assuming \textit{ad-hoc} projectivity conditions on the bimodules of a Morita context, we give patterns to produce examples.

\noindent\textit{Acknowledgements:} We thank Octavio Mendoza for a comment which led to the definition of algebras admitting a  strongly (co-)stratifying chain and ultimately to the mentioned result for these algebras.

\section{\sf Stratifying and strongly stratifying Morita contexts}\label{clear}

Let $k$ be a field. As mentioned,  a finite dimensional associative $k$-algebra is called an \textit{algebra} throughout this paper. We first recall the definition of Morita context and show that it is the same that an algebra with a distinguished  idempotent $e$. Then we specialize to the case where the two-sided ideal generated by $e$ is stratifying, in order to obtain a stratifying Morita context.

As E.L. Green and C. Psaroudakis pointed out in \cite{GREEN PSAROUDAKIS}, Morita contexts have been introduced by H. Bass \cite{BASS} in 1962, and considered by P. M. Cohn \cite{COHN} in 1996.

\begin{defi}\label{def MC}
A \textit{Morita context} is a matrix algebra $\begin{pmatrix}
                                              A  & N \\
                                              M & B
                                            \end{pmatrix}_{\alpha, \beta}$
where $A$ and $B$ are algebras, $M$ and $N$ are finite dimensional $B-A$ and  $A-B$-bimodules respectively, and $\alpha$ and $\beta$ are $A$ and $B$-bimodule maps respectively
$$\alpha : N\otimes_B M \to A  \mbox{ and } \beta : M\otimes_A N \to B$$
which verify  ``associativity" conditions
\begin{align}\label{associativity conditions}
  \alpha(n\otimes m) n' = n \beta(m\otimes n') \mbox{ and } \beta (m\otimes n) m' = m\alpha (n\otimes m').
\end{align}
The product of the Morita context is the matrix product using the above, namely
$$\begin{pmatrix}
  a & n \\
  m & b
\end{pmatrix}\begin{pmatrix}
  a' & n' \\
  m' & b'
\end{pmatrix}=\begin{pmatrix}
                aa'+\alpha(n\otimes m') & an'+nb' \\
                ma'+bm' & \beta(m\otimes n')+ bb'
              \end{pmatrix}.$$
This product is associative if and only if the ``associativity" conditions on $\alpha$ and $\beta$ hold.
\end{defi}

For completeness, we recall the well known fact that Morita contexts are in bijection with $k$-categories with an ordered pair of objects and morphisms are $k$-vector spaces. Indeed, starting with a Morita context, the associated category has $A$ (resp. $B$) as endomorphism algebra of the first (resp. second) object; morphisms from the first (resp. second) object to the second (resp. first) object are $M$ (resp. $N$); finally compositions of those morphisms are given by $\alpha$ and $\beta$, their ``associativity" conditions ensure that the composition is associative. Conversely, given a $k$-category with an ordered pair of objects, the Morita context has the algebras of endomorphisms of the objects on its diagonal. On the antidiagonal, the bimodules are the morphisms  between the objects - which are indeed bimodules over the previous algebras of endomorphisms. The maps $\alpha$ and $\beta$ are provided by the composition of the category.

\begin{defi}
The objects of the category $\mathsf{Morita.Contexts}$ are the Morita contexts. A morphism of  Morita contexts $$\begin{pmatrix}
                                              A  & N \\
                                              M & B
                                            \end{pmatrix}_{\alpha, \beta} \to \begin{pmatrix}
                                              A'  & N' \\
                                              M' & B'
                                            \end{pmatrix}_{\alpha', \beta'}$$
is a quadruple of maps
$$(A\stackrel{\varphi}{\to} A', B\stackrel{\psi}{\to} B', M\stackrel{f}{\to} M', N\stackrel{g}{\to}N'),$$
where $\varphi$ and $\psi$ are algebra maps - they provide $M'$ and $N'$ with respective structures of $B-A$ and $A-B$-bimodule. Moreover $f$ and $g$ are respectively $B-A$ and $A-B$-bimodule maps. In addition, these maps verify the following conditions:
$$\varphi\left(\alpha(n\otimes m)\right)=\alpha'\left(g(n)\otimes f(m)\right) \mbox{ and }
\psi\left(\beta(m\otimes n)\right)= \beta'\left(f(m)\otimes g(n)\right).$$
\end{defi}
The direct sum of the four maps of a morphism is an algebra map.

Note that a morphism between Morita contexts is equivalent to a functor between the corresponding categories with an ordered pair of objects, which respects the ordered pairs.

On the other hand the category of $k$-algebras with an idempotent is as follows.

\begin{defi}
The objects of the category $\mathsf{Algebras.Idempotent}$ are pairs $(\Lambda, e)$ where $\Lambda$ is a $k$-algebra and $e$ is a \textit{distinguished} idempotent of $\Lambda$. A morphism $\varphi: (\Lambda, e) \to (\Lambda', e')$ is a morphism of algebras $\varphi: \Lambda \to \Lambda'$ such that $\varphi(e)=e'$.
\end{defi}

The following result is well known.

\begin{theo}\label{identical}
The categories $\mathsf{Morita.Contexts}$ and $\mathsf{Algebras.Idempotent}$ are isomorphic.
\end{theo}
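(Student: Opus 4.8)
The plan is to exhibit two mutually inverse functors between $\mathsf{Morita.Contexts}$ and $\mathsf{Algebras.Idempotent}$ and check they are inverse on objects and on morphisms. In one direction, send a Morita context $\begin{pmatrix} A & N \\ M & B \end{pmatrix}_{\alpha,\beta}$ to the pair $(\Lambda, e)$ where $\Lambda$ is the underlying matrix algebra (whose multiplication is associative precisely because of the conditions (\ref{associativity conditions})) and $e = \begin{pmatrix} 1_A & 0 \\ 0 & 0 \end{pmatrix}$ is the distinguished idempotent. A morphism $(\varphi,\psi,f,g)$ is sent to the map $\Lambda \to \Lambda'$ acting diagonally and antidiagonally by the four components; the two displayed compatibility conditions in the definition of a morphism of Morita contexts are exactly what is needed for this map to respect the products $\alpha,\beta$ appearing in the matrix multiplication, and clearly it sends $e$ to $e'$. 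Functoriality is immediate since composition of morphisms of Morita contexts is componentwise.

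In the other direction, send $(\Lambda, e)$ to the Morita context with $A = e\Lambda e$, $B = f\Lambda f$ where $f = 1-e$, $N = e\Lambda f$, $M = f\Lambda e$, and $\alpha : N\otimes_B M \to A$, $\beta : M\otimes_A N \to B$ induced by multiplication in $\Lambda$ (well defined on the tensor products over $B = f\Lambda f$ and $A = e\Lambda e$ respectively, since $f$ and $e$ act as identities). The Peirce decomposition $\Lambda = e\Lambda e \oplus e\Lambda f \oplus f\Lambda e \oplus f\Lambda f$ gives a $k$-linear identification of $\Lambda$ with the matrix space, and one checks that multiplication in $\Lambda$ corresponds block by block to the Morita context product; in particular the associativity of multiplication in $\Lambda$ forces (\ref{associativity conditions}) for the induced $\alpha,\beta$. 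A morphism $\varphi : (\Lambda,e) \to (\Lambda',e')$ with $\varphi(e) = e'$ (hence $\varphi(f) = f'$) restricts to maps $e\Lambda e \to e'\Lambda'e'$, $f\Lambda f \to f'\Lambda'f'$, $e\Lambda f \to e'\Lambda'f'$, $f\Lambda e \to f'\Lambda'e'$, which form a morphism of Morita contexts — the compatibility with $\alpha,\beta$ being just the restriction of $\varphi$ being multiplicative.

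Finally I would verify that the two constructions are mutually inverse. Starting from a Morita context, forming $\Lambda$ and then taking $e\Lambda e$, $e\Lambda f$, etc., recovers $A$, $N$, $M$, $B$ and the original $\alpha,\beta$ on the nose (the Peirce components of the matrix algebra with its distinguished corner idempotent are literally the original data). Conversely, starting from $(\Lambda, e)$, the matrix algebra built on the Peirce decomposition is canonically isomorphic to $\Lambda$ via the sum-of-entries map, and this isomorphism sends the corner idempotent to $e$; naturality of this identification in $\Lambda$ makes the round trip the identity functor. The only mild subtlety — really the single point that needs care — is bookkeeping the bimodule structures: one must confirm that the $B$-$A$-bimodule structure on $f\Lambda e$ coming from left multiplication by $f\Lambda f$ and right multiplication by $e\Lambda e$ matches the abstract one in the Morita context, and that tensoring over these corner algebras is compatible with the restriction of $k$-algebra morphisms; all of this is routine but should be stated. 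No deeper obstacle is expected, since the statement is, as the authors note, well known.
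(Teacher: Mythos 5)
Your proposal is correct and follows essentially the same route as the paper: the matrix algebra with corner idempotent in one direction, the Peirce decomposition in the other, with the direct sum of the four component maps giving the correspondence on morphisms. You are in fact slightly more careful than the paper's own two-line verification, e.g.\ in flagging that the sum-of-entries identification must be treated as an equality (not merely a canonical isomorphism) for the composite to be the identity functor, which is what the claim of an \emph{isomorphism} of categories requires.
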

\begin{proof}
Let $\begin{pmatrix}
                                              A  & N \\
                                              M & B
                                            \end{pmatrix}_{\alpha, \beta}$
be a Morita context. The associated object in \\$\mathsf{Algebras.Idempotent}$ is the Morita context with distinguished idempotent  $\begin{pmatrix}
                                              1_A  & 0 \\
                                              0 & 0
                                            \end{pmatrix}.$ Starting from a morphism of Morita contexts, that is a quadruple of appropriate maps, their direct sum clearly preserves the distinguished idempotents.

Conversely, let $(\Lambda, e)$ be an object in $\mathsf{Algebras.Idempotent}$ and consider the idempotent $f=1-e$. Then $\begin{pmatrix}
                                              e\Lambda e  & e\Lambda f \\
                                              f\Lambda e & f \Lambda f
                                            \end{pmatrix}_{\alpha, \beta}$ is a Morita context
where
$$\alpha : e\Lambda f\otimes_{f \Lambda f} f\Lambda e \to e\Lambda e \mbox{ and } \beta :   f\Lambda e\otimes_{e \Lambda e} e\Lambda f \to f\Lambda f $$ are given by the product of $\Lambda$. Observe that a morphism $\varphi:(\Lambda, e)\to (\Lambda', e')$ also verifies $\varphi(f)=f'$, where $f'=1-e'$. Therefore, a morphism of algebras with distinguished idempotents provides a morphism of the corresponding Morita contexts.
These functors are mutual inverses.\qed
  \end{proof}

\begin{rema}
The previous Theorem \ref{identical} generalises to an algebra with a finite complete set  of  orthogonal idempotents (non necessarily primitive), see for instance \cite{CLMS2019,CIBILSMARCOS2006}.
\end{rema}

In 1996 E. Cline, B. Parshall and L. Scott \cite{CLINE PARSHALL SCOTT} considered a stratifying ideal generated by an idempotent of an algebra, we next recall its definition.

\begin{defi}\label{idempotent s}\cite{CLINE PARSHALL SCOTT}
Let $\Lambda$ be an algebra and $e\in\Lambda$ an idempotent. The ideal $\Lambda e \Lambda$ is stratifying if
\begin{enumerate}
  \item $\Tor_n^{e\Lambda e}(\Lambda e, e\Lambda)=0$ for $n>0$
  \item The surjection given by the product $\Lambda e\otimes_{e\Lambda e} e\Lambda \to \Lambda e \Lambda$ is injective.
\end{enumerate}
\end{defi}

\begin{rema} (\cite{CLINE PARSHALL SCOTT},\cite[Example 1, p.537]{HAN2014}) Let $\Lambda$ be an algebra and let $D(\Lambda)$ denote the unbounded derived category of complexes of left $\Lambda$-modules. Let $e\in \Lambda$ be an idempotent such that $\Lambda e \Lambda$ is a stratifying ideal.  Then $D(\Lambda)$ admits a recollement relative to $D(\Lambda/\Lambda e \Lambda)$ and $D(e\Lambda e)$, which is interpreted  as a short exact sequence of triangulated categories, as introduced by A. A. Beilinson, J. Bernstein and P. Deligne \cite{BEILINSON BERNSTEIN DELIGNE} in 1982.
\end{rema}

In 2009 S. Koenig and H. Nagase  \cite[p. 888]{KOENIG NAGASE}  showed that an idempotent $e\in \Lambda$ gives a stratifying ideal if and only if the canonical surjection $\Lambda \to \Lambda/\Lambda e \Lambda$ induces isomorphisms
$$\Ext^*_{\Lambda/\Lambda e \Lambda} (X,Y)\to \Ext^*_{\Lambda} (X,Y)$$
for all $\Lambda/\Lambda e \Lambda$-modules $X$ and $Y$.

It is worth noting that the above is precisely the definition of a \textit{strong} idempotent ideal given in 1992 by M. Auslander, M.I. Platzeck and G. Todorov, see \cite[p. 669]{AUSLANDER PLATZECK TODOROV}. They are not to be confused with ``strongly stratifying" ideals that we will consider later.

The following definition will allow to consider stratifying ideals in the framework of Morita contexts.

\begin{defi}
A Morita context $\begin{pmatrix}
                                              A  & N \\
                                              M & B
                                            \end{pmatrix}_{\alpha, \beta}$
is \textit{stratifying} if \begin{enumerate}
  \item $\Tor_n^A(M,N)=0$ for $n>0,$
  \item $\beta$ is injective.
\end{enumerate}

  \end{defi}

 Both stratifying definitions agree through the identification of Theorem \ref{identical} between algebras with distinguished idempotents and Morita contexts:
\begin{theo}\cite[Proposition 4.1]{GAO PSAROUDAKIS}\label{s iff s}
Let $\Lambda =\begin{pmatrix}
                                              A  & N \\
                                              M & B
                                            \end{pmatrix}_{\alpha, \beta}$ be a Morita context and let $e$ be the idempotent $\begin{pmatrix}
                                              1  & 0 \\
                                              0 & 0
                                            \end{pmatrix}$. The Morita context is stratifying if and only
the ideal $\Lambda e \Lambda $ is stratifying.

\end{theo}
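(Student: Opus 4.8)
The plan is to unwind both definitions through the dictionary of Theorem \ref{identical} and check they coincide term by term. Given the Morita context $\Lambda =\begin{pmatrix} A & N \\ M & B \end{pmatrix}_{\alpha, \beta}$ with the idempotent $e=\begin{pmatrix} 1 & 0 \\ 0 & 0 \end{pmatrix}$, one has the standard identifications $e\Lambda e \cong A$, $e\Lambda f \cong N$, $f\Lambda e \cong M$ and $f\Lambda f \cong B$, where $f=1-e$; moreover under these identifications the multiplication of $\Lambda$ restricted to $e\Lambda f\otimes_{f\Lambda f} f\Lambda e \to e\Lambda e$ is precisely $\alpha$, and the one restricted to $f\Lambda e\otimes_{e\Lambda e} e\Lambda f \to f\Lambda f$ is precisely $\beta$. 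These are exactly the assignments in the proof of Theorem \ref{identical}, so I would simply invoke that.

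First I would translate condition (1) of Definition \ref{idempotent s}. Since $\Lambda e \cong \begin{pmatrix} A \\ M \end{pmatrix}$ as an $e\Lambda e$-module is, as a right $A$-module, the direct sum $A \oplus M$, and similarly $e\Lambda \cong A \oplus N$ as a left $A$-module, we get
\[
\Tor_n^{e\Lambda e}(\Lambda e, e\Lambda) \;\cong\; \Tor_n^A(A\oplus M,\, A\oplus N).
\]
For $n>0$ the summands involving the free module $A$ vanish, so this reduces to $\Tor_n^A(M,N)$. Hence condition (1) of Definition \ref{idempotent s} holds if and only if $\Tor_n^A(M,N)=0$ for $n>0$, which is condition (1) of the stratifying Morita context.

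Next I would translate condition (2). The surjection $\Lambda e \otimes_{e\Lambda e} e\Lambda \to \Lambda e\Lambda$ decomposes, using $\Lambda e \cong A\oplus M$ and $e\Lambda \cong A\oplus N$, into four components indexed by the four matrix positions. Three of these components — the $A\otimes_A A \to A$, $A\otimes_A N\to N$, $M\otimes_A A\to M$ pieces — are canonical isomorphisms, so the only component that can fail to be injective is the bottom-right one, namely $M\otimes_A N \to f\Lambda f$, whose image is exactly $\beta(M\otimes_A N)\subseteq B$ and which is, as a map, just $\beta$. Therefore the product map $\Lambda e\otimes_{e\Lambda e} e\Lambda \to \Lambda e\Lambda$ is injective if and only if $\beta$ is injective, which is condition (2) of the stratifying Morita context. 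Combining the two translations gives the equivalence, and the theorem follows.

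I do not expect a serious obstacle here: the content is entirely bookkeeping with the Peirce decomposition $\Lambda e\Lambda$ and the identifications of Theorem \ref{identical}. The one point requiring a little care is making sure that the decomposition of the Tor and of the product map really is a decomposition as $A$-modules (resp. as $B$-bimodules into the relevant piece), so that one may pass the vanishing/injectivity statement through each summand independently; this is where I would spend a sentence verifying that $\Lambda e$ is a projective right $e\Lambda e$-module summing to $A\oplus M$ and that the matrix multiplication genuinely respects this block decomposition. Alternatively, one could simply cite \cite[Proposition 4.1]{GAO PSAROUDAKIS} directly, since the statement is theirs; but the short argument above makes the paper self-contained.
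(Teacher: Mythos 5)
Your proposal is correct and follows essentially the same route as the paper's proof: decomposing $\Lambda e = A\oplus M$ and $e\Lambda = A\oplus N$ over $e\Lambda e = A$ to reduce the Tor condition to $\Tor_n^A(M,N)$, and splitting the multiplication map $\Lambda e\otimes_{e\Lambda e} e\Lambda \to \Lambda e\Lambda$ into four blocks of which only $M\otimes_A N\to \Im\beta$ is non-trivial. No gaps; the bookkeeping points you flag (the block decomposition being one of $A$-modules, and the multiplication respecting the Peirce blocks) are exactly the ones the paper also relies on implicitly.
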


\begin{proof}
Let $\Lambda = \begin{pmatrix}
                                              A  & N \\
                                              M & B
                                            \end{pmatrix}$ and $e=\begin{pmatrix}
                                              1 & 0 \\
                                              0 & 0
                                            \end{pmatrix}$. We have
$$\Lambda e = A\oplus M \hskip5mm e\Lambda = A\oplus N$$
respectively as right and left $A$-modules because $e\Lambda e= A$. Therefore
$$\Tor_n^{e\Lambda e}(\Lambda e, e\Lambda)= \Tor_n^A(A , A)\oplus \Tor_n^A( A,N )\oplus \Tor_n^A(M ,A )\oplus \Tor_n^A( M,N ).$$
The three first direct summands on the right side of the equality are $0$ for $n>0$. This shows the equivalence on the Tor conditions.

Then note that $$\Lambda e \Lambda = \begin{pmatrix}
                                              A  & N \\
                                              M & \Im\beta
                                            \end{pmatrix}.$$
Moreover the morphism given by the product $\Lambda e\otimes_{e\Lambda e} e\Lambda \to \Lambda e \Lambda$ decomposes diagonally as the direct sum of four morphisms
$$A\otimes_A A \to A, \ \ A\otimes_A N\to N, \ \ M\otimes_A A\to M\ \mbox{ and }\ M\otimes_A N \to \Im\beta.$$
The first three are clearly isomorphisms, while the last one provides the equivalence on the injectivity conditions. \qed
\end{proof}
\begin{rema}\label{Lambda modulo the ideal through e}
From the proof of the previous Theorem \ref{s iff s} we have
\begin{equation*}
\Lambda/\Lambda e \Lambda = B/\Im\beta.
\end{equation*}
\end{rema}

We will consider strongly stratifying ideals as follows.
\begin{defi} \label{exigent stratifying ideal}\label{strongly stratifying ideal} Let $(\Lambda, e)$ be an algebra with a distinguished idempotent $e$, and let $f=1-e$. The ideal $\Lambda e \Lambda$  is \textit{strongly stratifying} if

 \begin{enumerate}
  \item $\Tor_n^{e\Lambda e}(\Lambda e, e\Lambda)=0$ for $n>0,$
  \item \label{two} $f\Lambda e \otimes_{e\Lambda e} e\Lambda f =0,$
  \item \label{three} $\Tor_n^{f\Lambda f}(\Lambda f, f\Lambda)=0$ for $n>0.$
\end{enumerate}
\end{defi}

\begin{rema}\
\begin{itemize}
  \item In Proposition \ref{idempotent es implies s} we will prove that if $\Lambda e \Lambda$  is strongly stratifying, then $\Lambda e \Lambda$ is indeed stratifying.
  \item The last requirement of Definition \ref{strongly stratifying ideal} for $\Lambda e \Lambda$ to be a strongly stratifying ideal coincides with the  first requirement of the definition for $\Lambda f\Lambda$ to be a stratifying ideal (see Definition \ref{idempotent s}).
\end{itemize}

\end{rema}

\begin{exam}\label{LIU}
We consider the example of \cite[Example 4.4]{LIU VITORIA YANG} and  \cite[Example 2.3]{ANGELERI KOENIG LIU YANG}. Let $\Lambda$ be the following bound quiver algebra

\footnotesize
\[\xymatrix{&&e_2\ar@<.7ex>[dd]^{c}\ar@<-.7ex>[dd]_{b}\\
e_1\ar[rru]^{a}&&&\\
&& e_3\ar[llu]^{d}}\hspace{10pt}\xymatrix{\\ ba=0,~ ad=0,~dc=0.}\]
\normalsize
As proved in \cite{LIU VITORIA YANG}, the idempotent $e=e_2+e_3$ provides a stratifying ideal $\Lambda e \Lambda$. We assert that  $\Lambda e \Lambda$ is a strongly stratifying ideal.

The algebra $A=e\Lambda e$ is the Kronecker algebra $\begin{tikzcd}
	{e_2} \\
	{e_3}
	\arrow["c", shift left=1, from=1-1, to=2-1]
	\arrow["b"', shift right=1, from=1-1, to=2-1]
\end{tikzcd} $.

Let $f=e_1=1-e$. Consider the right $A$-module $M=f\Lambda e$ and the left $A$-module $N= e\Lambda f$. Their associated quiver representations are
$$\begin{tikzcd}
	Me_2=k\{db\} \\
	Me_3=k\{d\}
	\arrow["1", shift left=1, from=2-1, to=1-1]
	\arrow["0"', shift right=1, from=2-1, to=1-1]
\end{tikzcd}
\hskip2cm
\begin{tikzcd}
	e_2N=k\{a\} \\
	e_3N=k\{ca\}
	\arrow["1", shift left=1, from=1-1, to=2-1]
	\arrow["0"', shift right=1, from=1-1, to=2-1]
\end{tikzcd}$$
We assert that $M\otimes_A N=0$.
Indeed,
\begin{align*}
&db\otimes a=d\otimes ba= d\otimes 0 =0, \ \ \ db\otimes ca = dbe_2\otimes ca= db\otimes e_2ca =db\otimes 0= 0\\
&d\otimes a=de_3\otimes a=d\otimes e_3a=d\otimes 0=0, \  \ \ d\otimes ca = dc\otimes a = 0\otimes a =0.
\end{align*}

Moreover $B=f\Lambda f =k$, thus the last requirement of Definition \ref{strongly stratifying ideal} holds.
For the sake of completeness we next check that $\Tor^A_n( M,N)=0$ for $n>0$, after \cite{LIU VITORIA YANG}. Consider the projective left $A$-modules $$P_1=\begin{tikzcd}
	0 \\
\\
	{k\{e_3\}}
	\arrow[shift right=1, from=1-1, to=3-1]
	\arrow[shift left=1, from=1-1, to=3-1]
\end{tikzcd}\hskip2cm
P_0=\begin{tikzcd}
	{k\{e_1\}} \\
\\
	{k\{b\}\oplus k\{c\}}
	\arrow["\begin{pmatrix}
	          0 \\
	          1
	        \end{pmatrix}", shift left=1, from=1-1, to=3-1]
	\arrow["\begin{pmatrix}
	          1 \\
	          0
	        \end{pmatrix}"', shift right=1, from=1-1, to=3-1]
\end{tikzcd}
$$
and the projective resolution of $N$
$$0\to P_1\to P_0\to N\to 0$$
where the map $P_1\to P_0$ sends $e_3$ to $b$. It is straightforward to verify that $M\otimes_AP_1= k\{d\otimes e_3\}$ and $M\otimes_A P_0= k\{d\otimes b\}$. Hence the complex computing $\Tor^A_n(M,N)$ for $n\geq 0$ is $$0\to M\otimes_AP_1\to M\otimes_A P_0\to 0$$
which is exact. Note that this gives another proof that $M\otimes_A N=0$.
\end{exam}
\begin{defi}\label{exigent stratifying MC}
A  Morita context $\begin{pmatrix}
                                              A  & N \\
                                              M & B
                                            \end{pmatrix}_{\alpha, \beta}$ is \textit{strongly stratifying} if
                                            \begin{enumerate}\label{exigent stratifying MC explicit}\label{strongly stratifying MC explicit}
                                         \item     $\Tor_n^A(M,N)=0 \text{ for } n>0,$
                                         \item $M\otimes_A N=0,$
                                        \item      $\Tor_n^B(N,M)=0 \text{ for } n>0.$
                                            \end{enumerate}
\end{defi}

\begin{rema}\label{esmc is smc}
The morphism $\beta$ of a strongly stratifying Morita context is $\beta: 0\to B$ which is of course injective. Therefore strongly stratifying Morita contexts are indeed stratifying.
\end{rema}

\begin{prop}\label{es iff es}
Let $\Lambda$ be an algebra with a distinguished idempotent $e$, and let $f=1-e$. The associated Morita context   $\begin{pmatrix}
                                              e\Lambda e  & e\Lambda f \\
                                              f\Lambda e& f\Lambda f
                                            \end{pmatrix}_{\alpha, \beta}$ is strongly stratifying if and only if the ideal $\Lambda e \Lambda$  is strongly stratifying.

\end{prop}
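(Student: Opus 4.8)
The plan is to unwind both conditions (strongly stratifying ideal versus strongly stratifying Morita context) into the three explicit Tor-vanishing statements and match them one by one, exactly in the spirit of the proof of Theorem \ref{s iff s}. With $\Lambda = \begin{pmatrix} e\Lambda e & e\Lambda f \\ f\Lambda e & f\Lambda f \end{pmatrix}$, $A=e\Lambda e$, $B=f\Lambda f$, $M=f\Lambda e$ and $N=e\Lambda f$, the Morita context is strongly stratifying iff $\Tor_n^A(M,N)=0$ for $n>0$, $M\otimes_A N=0$, and $\Tor_n^B(N,M)=0$ for $n>0$; the ideal $\Lambda e\Lambda$ is strongly stratifying iff $\Tor_n^{e\Lambda e}(\Lambda e, e\Lambda)=0$ for $n>0$, $f\Lambda e\otimes_{e\Lambda e}e\Lambda f = 0$, and $\Tor_n^{f\Lambda f}(\Lambda f, f\Lambda)=0$ for $n>0$.

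First I would handle condition (1). Since $e\Lambda e = A$, as a right $A$-module $\Lambda e = A \oplus M$ and as a left $A$-module $e\Lambda = A\oplus N$; therefore $\Tor_n^{e\Lambda e}(\Lambda e, e\Lambda)$ decomposes as the direct sum of $\Tor_n^A(A,A)$, $\Tor_n^A(A,N)$, $\Tor_n^A(M,A)$ and $\Tor_n^A(M,N)$. The first three vanish for $n>0$ because $A$ is free over itself, so $\Tor_n^{e\Lambda e}(\Lambda e, e\Lambda)=0$ for $n>0$ iff $\Tor_n^A(M,N)=0$ for $n>0$. This is verbatim the argument already given for Theorem \ref{s iff s}, so I would just cite that proof. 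Condition (2) is immediate: $f\Lambda e\otimes_{e\Lambda e}e\Lambda f = M\otimes_A N$ is literally the same object, so the two versions of (2) are identical. For condition (3), I apply the symmetric version of the observation used for (1): with $f\Lambda f = B$, as a right $B$-module $\Lambda f = B\oplus N$ (since $f\Lambda f = B$ and $e\Lambda f = N$) and as a left $B$-module $f\Lambda = B\oplus M$, so $\Tor_n^{f\Lambda f}(\Lambda f, f\Lambda)$ decomposes as the sum of $\Tor_n^B(B,B)$, $\Tor_n^B(B,M)$, $\Tor_n^B(N,B)$ and $\Tor_n^B(N,M)$; again the first three vanish for $n>0$, giving the equivalence with $\Tor_n^B(N,M)=0$ for $n>0$.

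Putting the three equivalences together yields the proposition. There is essentially no obstacle here: the only point requiring a little care is the bookkeeping of left/right bimodule structures in the direct sum decompositions of $\Lambda e$, $e\Lambda$, $\Lambda f$, $f\Lambda$ and the compatibility of the tensor products over the correct corner algebras, but this is routine and already illustrated in the proof of Theorem \ref{s iff s}. I would write the proof in a few lines, decomposing the relevant modules, invoking the direct-sum behaviour of $\Tor$, and noting the trivial identity for condition (2).
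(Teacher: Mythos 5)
Your proposal is correct and follows essentially the same route as the paper's proof: decompose $\Lambda e=e\Lambda e\oplus f\Lambda e$ and $e\Lambda=e\Lambda e\oplus e\Lambda f$ (and symmetrically for $f$), use that the corner algebra is projective over itself to kill three of the four Tor summands in positive degree, and observe that the degree-zero condition $f\Lambda e\otimes_{e\Lambda e}e\Lambda f=0$ is literally identical on both sides. No further comment is needed.
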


\begin{proof} We have
\begin{align*}
  \Tor_n^{e\Lambda e} (\Lambda e , e\Lambda) = &\Tor_n^{e\Lambda e} (e\Lambda e , e\Lambda e)\ \oplus\  \Tor_n^{e\Lambda e} (e\Lambda e , e\Lambda f)\ \oplus \\
  &\Tor_n^{e\Lambda e} (f\Lambda e , e\Lambda e)\ \oplus\ \Tor_n^{e\Lambda e} (f\Lambda e , e\Lambda f).
\end{align*}
Of course $e\Lambda e$  is a projective left $e\Lambda e$-module, it is also projective as a right $e\Lambda e$-module. Thus for $n>0$
$$\Tor_n^{e\Lambda e} (\Lambda e , e\Lambda) = \Tor_n^{e\Lambda e} (f\Lambda e , e\Lambda f).$$
Moreover for $n=0$ the second condition in both definitions  is $f\Lambda e\otimes_{e\Lambda e} e\Lambda f =0$.

Analogously, for $n>0$ we have
 $$\Tor_n^{f\Lambda f} (\Lambda f , f\Lambda) = \Tor_n^{f\Lambda f} (e\Lambda f , f\Lambda e).$$
 \qed
\end{proof}
\begin{prop} \label{idempotent es implies s}
Let $\Lambda$ be an algebra and $e\in\Lambda$ an idempotent. If the ideal $\Lambda e \Lambda$ is strongly stratifying, then it is stratifying.
\end{prop}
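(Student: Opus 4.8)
The plan is to treat this as a formal consequence of the Morita-context dictionary developed above. Condition (1) in Definition \ref{idempotent s} of a stratifying ideal is \emph{verbatim} condition (1) in Definition \ref{strongly stratifying ideal} of a strongly stratifying ideal, so nothing has to be checked there; the entire content is that the multiplication map $\mu\colon\Lambda e\otimes_{e\Lambda e} e\Lambda\to\Lambda e\Lambda$ is injective.

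First I would pass to Morita contexts. By Theorem \ref{identical}, $(\Lambda,e)$ corresponds to the Morita context $\begin{pmatrix}e\Lambda e & e\Lambda f\\ f\Lambda e & f\Lambda f\end{pmatrix}_{\alpha,\beta}$, and by Proposition \ref{es iff es} the hypothesis that $\Lambda e\Lambda$ is strongly stratifying is equivalent to this Morita context being strongly stratifying in the sense of Definition \ref{exigent stratifying MC}. In particular its second condition reads $M\otimes_A N=f\Lambda e\otimes_{e\Lambda e} e\Lambda f=0$, so the structural map $\beta\colon M\otimes_A N\to B$ has domain $0$ and is trivially injective. By Remark \ref{esmc is smc} the Morita context is therefore stratifying, and Theorem \ref{s iff s} then returns the conclusion that $\Lambda e\Lambda$ is a stratifying ideal.

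If one prefers to avoid the detour through Morita contexts, the same argument can be made directly, and is worth recording. Using the orthogonal decompositions $\Lambda e=e\Lambda e\oplus f\Lambda e$ of right $e\Lambda e$-modules and $e\Lambda=e\Lambda e\oplus e\Lambda f$ of left $e\Lambda e$-modules, the source of $\mu$ splits as
$$\Lambda e\otimes_{e\Lambda e} e\Lambda=(e\Lambda e\otimes_{e\Lambda e} e\Lambda e)\oplus(e\Lambda e\otimes_{e\Lambda e} e\Lambda f)\oplus(f\Lambda e\otimes_{e\Lambda e} e\Lambda e)\oplus(f\Lambda e\otimes_{e\Lambda e} e\Lambda f),$$
whose last summand vanishes by hypothesis (2) and whose first three summands are canonically $e\Lambda e$, $e\Lambda f$ and $f\Lambda e$. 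Matching block by block with $\Lambda e\Lambda=e\Lambda e\oplus e\Lambda f\oplus f\Lambda e\oplus f\Lambda e\Lambda f$, in which the $f\Lambda f$-block $f\Lambda e\Lambda f$ is the image of $\beta$ and hence also zero, one sees that $\mu$ is in fact an isomorphism, a fortiori injective.

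I do not expect a real obstacle: all the machinery is already in place. The only step deserving a moment's care is the bookkeeping in the last paragraph — verifying that the four summands of $\Lambda e\otimes_{e\Lambda e} e\Lambda$ land in the expected matrix blocks of $\Lambda e\Lambda$ inside $\Lambda$ — and this is immediate from the orthogonality of $e$ and $f=1-e$, being exactly the compatibility already checked in the proof of Theorem \ref{s iff s}.
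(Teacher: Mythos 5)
Your first argument is exactly the paper's proof: Proposition \ref{es iff es} passes to the strongly stratifying Morita context, Remark \ref{esmc is smc} gives injectivity of $\beta$ (its domain is zero), and Theorem \ref{s iff s} translates back to the ideal. The direct block-by-block verification you append is also correct --- it essentially re-runs the computation in the proof of Theorem \ref{s iff s} and even shows the multiplication map is an isomorphism --- but it is redundant with the first route.
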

\begin{proof}
By Proposition \ref{es iff es}, the Morita context is strongly stratifying. Thus the Morita context is stratifying by  Remark \ref{esmc is smc}. Finally the ideal $\Lambda e \Lambda$ is stratifying by  Theorem \ref{s iff s}.\qed
\end{proof}

\section{\sf Jacobi-Zariski long nearly exact sequence and bounded extensions of algebras}\label{JJJZZ}

We begin this section with a brief account of relative Hochschild homology with respect to a subalgebra, and not with respect to an ideal, as it is often considered. Next, we will provide the adjusted and extended version of the Jacobi-Zariski long near exact sequence of  in \cite{CLMS2021nearlyexactJZ}, then we confirm the results of \cite{CLMS2022bounded}.

Let $C\subset \Lambda$ be an extension of algebras, that is $C$ is a subalgebra of $\Lambda$. The relative projective $\Lambda$-modules are direct summands of $\Lambda\otimes_C V$, where $V$ is any left $C$-module. Note that if $C=k$, the relative projectives are the usual $\Lambda$-projective modules.

Recall that a relative projective resolution of a $\Lambda$-module $U$ requires the existence of a $C$-contracting homotopy, by  \cite[p. 250]{HOCHSCHILD1956} it always exits. For a right $\Lambda$-module $U'$, this leads to well defined vector spaces $\Tor_*^{\Lambda|C}(U', U)$.

Let $C^\mathsf{e}$ be the enveloping algebra $C\otimes_k C^{\mathsf{op}}$ of an algebra, and consider the extension of algebras $C^\mathsf{e}\subset \Lambda^\mathsf{e}$.  The relative Hochschild homology $H_*(\Lambda |C,X)$ of a $\Lambda$-bimodule $X$ is defined as $\Tor_*^{\Lambda^\mathsf{e} |C^\mathsf{e}}(X, \Lambda).$ Note that G. Hochschild considered the extension $ C\otimes \Lambda^{\mathsf{op}}\subset \Lambda^\mathsf{e}$ in \cite{HOCHSCHILD1956}, but the $\Tor$ vector spaces obtained this way are the same, see for instance \cite{CLMS Pacific}.

If $C=k$,  relative Hochschild homology is Hochschild homology as defined in \cite{HOCHSCHILD1945}, which is denoted $H_*(\Lambda ,X)$. If $X=\Lambda$ the usual notation is $HH_*(\Lambda)$.

\vskip5mm
The setting of \cite{CLMS2021nearlyexactJZ} is as follows.

 \begin{defi}\label{sequence nearly exact}
A sequence of positively graded chain complexes of vector spaces  \begin{equation}\label{seq}
      0\to C_*\stackrel{\iota}{\to}  D_*\stackrel{\kappa}{\to} E_*\to 0
      \end{equation} with $\iota$ injective, $\kappa$ surjective and $\kappa\iota=0$ is called \textit{short nearly exact}. The middle quotient complex $\left({\Ker\kappa}/{\Im \iota}\right)_*$ is called the \textit{gap complex}.

 \end{defi}

\begin{rema}\label{double complex}

Consider the double complex with columns at $p=0,1$ and $2$ given by the short nearly exact sequence (\ref{seq}) after the usual change of signs. Recall that the horizontal maps go from right to left, since the double complex is of homological type.

By filtering the double complex by rows we have a spectral sequence. At page $1$ the only possible non zero column is column $1$, which is the gap complex $(Ker \kappa/Im \iota)_*$. Therefore the spectral sequence converges to $H_*\left({\Ker\kappa}/{\Im \iota}\right)$.
\end{rema}

\begin{defi}\label{definition long nearly exact}
A \textit{long nearly exact sequence}  is a complex of vector spaces
\begin{align*}
 \dots &\stackrel{\delta}{\to} U_{m} \stackrel{I}{\to} V_{m} \stackrel{K}{\to} W_{m} \stackrel{\delta}{\to} U_{m-1} \stackrel{I}{\to} V_{m-1}\to \dots \\&\stackrel{\delta}{\to} U_{n} \stackrel{I}{\to} V_{n} \stackrel{K}{\to} W_{n}
\end{align*}
ending at some $n$ which is exact at $W_m$ for all $m>n$ and at all $U_m$. The graded vector space $(Ker K/ Im I)_*$ is called the \textit{gap} of the long nearly exact sequence.
\end{defi}

Next we adjust and we extend Theorem 4.2 of \cite{CLMS2021nearlyexactJZ}. Part a) is new, while part b) requires the additional hypothesis (\ref{missing}) which is missing in \cite{CLMS2021nearlyexactJZ}. It is worth noting that Jonathan Lindell wrote to us raising a question about the results in \cite{CLMS2021nearlyexactJZ}, just after we realized that Theorem 4.2 of \cite{CLMS2021nearlyexactJZ} needs this hypothesis.

\begin{theo}\label{theo gap}
Consider a short nearly exact sequence of positively graded chain complexes
 \begin{equation}\label{towards JZ nes}0\to C_*\stackrel{\iota}{\to}  D_*\stackrel{\kappa}{\to} E_*\to 0.\end{equation}
 \renewcommand{\labelenumi}{\alph{enumi})}
 \begin{enumerate}
 \item If $H_*(\Ker \kappa /\Im \iota)=0$ for $*>>0$, then there is long exact sequence
 \begin{align}\label{long n exact sequence}\dots &\stackrel{\delta}{\to} H_{m}(C_*) \stackrel{I}{\to} H_{m}(D_*) \stackrel{K}{\to} H_{m}(E_*) \stackrel{\delta}{\to} H_{m-1}(C_*)  \stackrel{I}{\to} \dots  \nonumber
 \\&\dots \stackrel{\delta}{\to} H_{n}(C_*) \stackrel{I}{\to} H_{n}(D_*) \stackrel{K}{\to} H_{n}(E_*)
\end{align}
ending at some $n$.

\item Let $I$ and $K$ be the maps induced in homology by $\iota$ and $\kappa$ respectively. If
 \begin{equation}\label{missing}\dim_k(\Ker K/ \Im I)_*= \dim_kH_*(\Ker \kappa/ \Im\iota) \mbox{ for } *>>0\end{equation} then the sequence  (\ref{long n exact sequence}) exists and it is a long nearly exact sequence.
\end{enumerate}

\end{theo}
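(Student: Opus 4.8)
The plan is to analyze the double complex associated to the short nearly exact sequence (\ref{towards JZ nes}), exactly as sketched in Remark \ref{double complex}, and extract the two halves of the long sequence from the two spectral sequences attached to it. Concretely, I would form the double complex with three columns $p=0,1,2$ equal to $C_*$, $D_*$, $E_*$ respectively (with the usual sign change on the differentials of $D_*$ and $E_*$), horizontal maps $\iota$ and $\kappa$. Because $\kappa\iota=0$, this is a genuine double complex; let $T_*$ be its total complex. Filtering by columns gives a spectral sequence whose $E_1$-page records only $H_*(C_*)$, $H_*(D_*)$, $H_*(E_*)$ in columns $0,1,2$; the $E_2$-differentials and the $E_3$-differential (the latter being the connecting map $\delta$) are precisely the maps $I$, $K$, $\delta$ in (\ref{long n exact sequence}). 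Filtering by rows, as in Remark \ref{double complex}, collapses at $E_2$ to the homology of the gap complex, so $H_n(T_*)\cong H_n(\Ker\kappa/\Im\iota)$ for all $n$.

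For part a), the hypothesis $H_*(\Ker\kappa/\Im\iota)=0$ for $*\gg 0$ means $H_n(T_*)=0$ for $n$ large. I would then run the column spectral sequence in the familiar low-degree way: the three-term $E_2$-complex $H_m(C_*)\xrightarrow{I}H_m(D_*)\xrightarrow{K}H_m(E_*)$ has homology at the middle spot equal to $E_3$ at $(1,m)$, and the $E_3$-differential $\delta\colon E_3^{(2,m)}\to E_3^{(0,m-1)}$ has kernel and cokernel surviving to $E_\infty$; since $T_*$ is bounded-above-acyclic, the associated graded pieces of $H_m(T_*)$ vanish in large degrees, which forces exactness of the assembled three-periodic sequence in all degrees $>n$ for a suitable threshold $n$, and exactness at every $H_m(C_*)$ term unconditionally (the term $H_m(C_*)$ sits in column $0$, the extreme position, so $E_\infty$ there is a subquotient of $H_m(T_*)$ and the incoming map from $E_3^{(2,m)}$ plus the outgoing map $I$ exhaust it). This is the standard ``three columns give a long exact sequence'' argument, and the only subtlety is bookkeeping the degree shift so that the sequence genuinely ends at some finite $n$ rather than continuing.

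For part b), the point is that without acyclicity we cannot expect exactness, but we can control the \emph{defect}. The defect of (\ref{long n exact sequence}) at the middle term $H_m(D_*)$ is by construction $(\Ker K/\Im I)_m$, which is $E_3^{(1,m)}$. The total homology $H_m(T_*)=H_m(\Ker\kappa/\Im\iota)$ receives contributions from $E_\infty^{(0,m)}$, $E_\infty^{(1,m)}$, $E_\infty^{(2,m)}$. In high degrees the outer columns $E_\infty^{(0,m)}$ and $E_\infty^{(2,m)}$ are precisely the homology of (\ref{long n exact sequence}) at $H_m(C_*)$ and $H_m(E_*)$ — but those vanish because the $\delta$-maps make the sequence exact there (same argument as in a), using that $C_*,E_*$-columns are extreme). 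So the dimension count $\dim_k(\Ker K/\Im I)_m = \dim_k H_m(\Ker\kappa/\Im\iota)$ forces $E_\infty^{(1,m)} = E_3^{(1,m)}$, i.e. no higher differentials touch the middle column, and hence the sequence is exact at $H_m(C_*)$ and $H_m(E_*)$ for $m\gg 0$, with the gap concentrated in the middle as required by Definition \ref{definition long nearly exact}. I would then separately argue exactness at \emph{every} $U_m=H_m(C_*)$ (not just large $m$): this is the assertion that $\Im\delta=\Ker I$ in all degrees, which again follows because column $0$ is extreme, so there is never an incoming higher differential and the only issue is the outgoing $I$ together with the $E_3$-differential $\delta$ landing there — and an explicit diagram chase on cycles in $C_*$, $D_*$ confirms $\Ker I = \Im\delta$ unconditionally.

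The main obstacle I expect is part b): making the hypothesis (\ref{missing}) do exactly the work of killing the outer $E_\infty$-contributions, i.e. showing that equality of dimensions in high degree is equivalent to the vanishing of the defect at the $C_*$- and $E_*$-spots and therefore to the collapse of higher differentials on the middle column. This requires being careful that $\dim_k$ is finite in each degree (true since we are over a field and the complexes are of vector spaces, and in the intended application finite-dimensional), and that the comparison is made degree by degree with the correct shift between $E_\infty^{(1,m)}$ and $E_3^{(1,m)}$. The diagram chases themselves are routine; the conceptual care is in aligning ``for $*\gg 0$'' thresholds across the three columns so that a single $n$ works for the statement.
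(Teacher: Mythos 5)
Your overall framework --- the three-column double complex, the row filtration computing $H_*(\Ker\kappa/\Im\iota)$ and the column filtration carrying $H_*(C_*)$, $H_*(D_*)$, $H_*(E_*)$ --- is the same as the paper's. But there is a genuine gap at the central point: the construction of the connecting map $\delta$. You describe $\delta$ as ``the $E_3$-differential $E_3^{(2,m)}\to E_3^{(0,m-1)}$'', i.e.\ as the spectral sequence differential $d_2$ from $(\Ker I)\subset H_m(C_*)$ to $\Coker K$, a quotient of $H_{m+1}(E_*)$. That map goes in the \emph{opposite} direction to the $\delta$ required in (\ref{long n exact sequence}), which must be a map $H_m(E_*)\to H_{m-1}(C_*)$. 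For a genuine short exact sequence one would get such a map from the snake lemma, but here the chase breaks down: lifting a cycle $e\in E_m$ to $d\in D_m$ gives $\partial d\in\Ker\kappa$, which need not lie in $\Im\iota$, so it has no preimage in $C_{m-1}$. The sequence (\ref{long n exact sequence}) therefore does not even exist as a complex until $\delta$ is constructed; the paper's construction is $\delta=q\,d_2^{-1}\,p$ with $p\colon H_m(E_*)\twoheadrightarrow\Coker K$ and $q\colon\Ker I\hookrightarrow H_{m-1}(C_*)$, which requires first proving that $d_2$ is invertible in high degrees.

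That invertibility is exactly what both hypotheses deliver, via the dimension identity obtained by comparing the two filtrations:
$\dim_k(\Ker d_2)_*+\dim_k(\Ker K/\Im I)_*+\dim_k(\Coker d_2)_{*+1}=\dim_kH_*(\Ker\kappa/\Im\iota)$.
In a) the right-hand side vanishes for $*\gg0$, killing all three summands; in b) hypothesis (\ref{missing}) forces $\Ker d_2=0=\Coker d_2$ while leaving the middle term as the gap. Your claim that ``an explicit diagram chase on cycles confirms $\Ker I=\Im\delta$ unconditionally'' is therefore not correct: these identities hold because $p$ is surjective and $q$ is injective in the formula for $\delta$, and only in the range where $d_2$ is invertible --- which is also the actual reason the sequence ends at some finite $n$, not mere degree bookkeeping. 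Relatedly, in b) your assertion that the dimension count prevents ``higher differentials touching the middle column'' misidentifies the issue: no differential ever touches column $1$ after page $1$ ($d_2$ goes from column $2$ to column $0$, and there is no $d_3$ in a three-column complex); what the count controls is precisely $\Ker d_2$ and $\Coker d_2$.
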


\begin{proof}
We now filter the double complex of Remark \ref{double complex} by columns. At page $1$ the vertical differentials are $0$. The horizontal ones are
$$ 0 \leftarrow H_m(E_*) \stackrel{K}{\leftarrow} H_{m}(D_*)\stackrel{I}{\leftarrow} H_{m}(C_*)\leftarrow 0$$

At page $2$ we have the following:

\begin{itemize}
\item  column $0$ is $(\Coker K)_*$,
\item  column $1$ is $(\Ker K/\Im I)_*$,
\item  column $2$ is $(\Ker I)_*$
\end{itemize}
and all the  other columns are zero.

At page $2$, the differentials are zero except possibly
 $$(E^2_{0, q+1}=\Coker K) \stackrel{d_2}{\longleftarrow}(E^2_{2,q}=\Ker I)$$
see for instance \cite[p. 122]{WEIBEL}.   Hence at page $3$ we have that:
 \begin{itemize}
\item  column $0$ is $(\Coker d_2)_*$,
\item  column $1$ is $(\Ker K/\Im I)_*$,
\item  column $2$ is $(\Ker d_2)_*$,
\end{itemize}
all the  other columns are zero and the differentials are zero. Hence these vector spaces remain the same in the following pages. Consequently the spectral sequence converges to $$(\Coker d_2)_{*+1} \oplus (\Ker K/\Im I)_* \oplus(\Ker d_2)_*.$$

Both filtrations converge to the same limit, hence Remark \ref{double complex} gives
\begin{align}\label{equality dimensions}
\dim_k(\Ker d_2)_* + \dim_k(\Ker K/\Im I)_* + \dim_k(\Coker d_2)_{*+1} =\nonumber \\ \dim_kH_*(\Ker \kappa/ \Im\iota).&
\end{align}

For a) we assume that $H_*(\Ker \kappa /\Im \iota)=0$ for $*>>0$, hence
$$\dim_k(\Ker d_2)_* + \dim_k(\Ker K/\Im I)_* + \dim_k(\Coker d_2)_{*+1}=0 \mbox{ for } *>>0.$$
In particular we have  $(\Ker K /\Im I)_*=0$ for $*>>0$. In other words the sequence is exact at $H_m(D_*)$ for $m>>0.$

Moreover, in high enough degrees we have $$\dim_k(\Ker d_2)_* =0 = \dim_k(\Coker d_2)_{*+1}$$
which means that $d_2$ is invertible in high enough degrees.

Let us consider the canonical maps $p: H_m(E_*) \twoheadrightarrow \Coker K$ and $q: \Ker I \hookrightarrow H_{m-1}(C_*)$. Define
$$\delta= q \left(d_2^{-1}\right) p.$$ We do have $\Ker\delta=\Im K$ and $\Im \delta= \Ker I$.

For b), the hypothesis (\ref{missing}) implies that $d_2$ is invertible in high enough degrees. The previous construction provides $\delta$, which  gives a long nearly exact sequence which gap is $H_*(\Ker \kappa/ \Im\iota)$.
\qed
\end{proof}

In the following, we confirm the results in \cite{CLMS2021nearlyexactJZ} after Theorem \ref{theo gap}.

Let $C\subset \Lambda$ be an extension of algebras. Let $X$ be a $\Lambda$-bimodule. By \cite[Theorem 3.3]{CLMS2021nearlyexactJZ} there is a \textit{fundamental nearly exact sequence} for $*>1$ \begin{equation}\label{funda}
0\to \check{C}_*(C,X)\stackrel{\iota}{\to}  \check{C}_*(\Lambda,X)\stackrel{\kappa}{\to} \check{C}_*(\Lambda|C,X)\to 0
\end{equation}
where the positively graded complexes are defined in  \cite[Section 2]{CLMS2021nearlyexactJZ}. The homology of these complexes gives respectively $H_{*}(C,X)$,  $H_{*}(\Lambda,X)$ and  $H_{*}(\Lambda|C,X).$

We denote by $I$ and $K$ the maps in Hochschild homology induced  by $\iota$ and $\kappa$ respectively.
In this context, we reformulate below Theorem 4.4 of \cite{CLMS2021nearlyexactJZ}, which is actually a particular case of the above Theorem \ref{theo gap}.
\begin{theo}\label{JZ}
Let $C\subset \Lambda$ be an extension of algebras and let $X$ be a $\Lambda$-bimodule. With the above notations, we have
 \renewcommand{\labelenumi}{\alph{enumi})}
\begin{enumerate}
 \item If $H_*(\Ker \kappa /\Im \iota)=0$ for $*>>0$, then there is a Jacobi-Zariski sequence
 \begin{align}\label{JJZZ}\dots &\stackrel{\delta}{\to} H_{m}(C,X) \stackrel{I}{\to} H_{m}(\Lambda,X) \stackrel{K}{\to} H_{m}(\Lambda|C,X) \stackrel{\delta}{\to}H_{m-1}(C,X) \stackrel{I}{\to}\dots\nonumber
\\\tag{JZ} &\stackrel{\delta}{\to} H_{n}(C,X) \stackrel{I}{\to} H_{n}(\Lambda,X) \stackrel{K}{\to} H_{n}(\Lambda|C,X)
\end{align}
which is long exact and ends for some $n$.
\item If $\dim_k(\Ker K/ \Im I)_*= \dim_kH_*(\Ker \kappa/ \Im\iota)$ for  $*>>0,$ then the sequence  (\ref{JJZZ}) exists and it is a long nearly exact sequence.
\end{enumerate}

\end{theo}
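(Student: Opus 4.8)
The plan is to recognize Theorem \ref{JZ} as a direct specialization of Theorem \ref{theo gap} to the fundamental nearly exact sequence (\ref{funda}). First I would invoke \cite[Theorem 3.3]{CLMS2021nearlyexactJZ} to obtain, for the extension $C\subset\Lambda$ and the $\Lambda$-bimodule $X$, the short nearly exact sequence of positively graded complexes
$$0\to \check{C}_*(C,X)\stackrel{\iota}{\to} \check{C}_*(\Lambda,X)\stackrel{\kappa}{\to} \check{C}_*(\Lambda|C,X)\to 0,$$
valid in degrees $*>1$, whose homologies are by construction $H_*(C,X)$, $H_*(\Lambda,X)$ and $H_*(\Lambda|C,X)$. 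Since Theorem \ref{theo gap} is a purely homological statement about an arbitrary short nearly exact sequence of positively graded chain complexes, and makes no use of the specific nature of the complexes beyond that, I would simply apply it verbatim to this sequence: $C_*=\check{C}_*(C,X)$, $D_*=\check{C}_*(\Lambda,X)$, $E_*=\check{C}_*(\Lambda|C,X)$, with $I$ and $K$ the induced maps in Hochschild homology.

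Part a) then follows immediately: the hypothesis $H_*(\Ker\kappa/\Im\iota)=0$ for $*\gg0$ is exactly the hypothesis of Theorem \ref{theo gap}a), so the conclusion gives a long exact sequence
$$\dots\stackrel{\delta}{\to}H_m(C,X)\stackrel{I}{\to}H_m(\Lambda,X)\stackrel{K}{\to}H_m(\Lambda|C,X)\stackrel{\delta}{\to}H_{m-1}(C,X)\stackrel{I}{\to}\dots$$
terminating at some $n$, which is the sequence (\ref{JJZZ}). Likewise, part b) follows because the hypothesis $\dim_k(\Ker K/\Im I)_*=\dim_kH_*(\Ker\kappa/\Im\iota)$ for $*\gg0$ is precisely the condition (\ref{missing}) of Theorem \ref{theo gap}b), whence the same conclusion yields the long nearly exact sequence with gap $H_*(\Ker\kappa/\Im\iota)$.

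The only genuine point requiring care — and the step I would flag as the mild obstacle — is the degree range: the fundamental nearly exact sequence (\ref{funda}) holds only for $*>1$, not in all degrees, so one must either extend or modify the complexes in low degrees to apply Theorem \ref{theo gap} (which is stated for honest positively graded complexes), or else observe that the conclusion is an assertion about behaviour in high degrees (exactness at $H_m(D_*)$ for $m\gg0$, and the construction of $\delta$ via $d_2^{-1}$ in high degrees) and that the truncation in low degrees is harmless for these conclusions since the spectral sequence arguments in the proof of Theorem \ref{theo gap} only constrain sufficiently large $*$. I would handle this by noting, as in \cite{CLMS2021nearlyexactJZ}, that one may replace the complexes below degree $2$ by any fixed complexes computing the correct homology, which does not affect the stable parts of either filtration's spectral sequence, and then the verbatim application of Theorem \ref{theo gap} goes through. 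Everything else is bookkeeping: identifying $H_*$ of the three complexes with the three relative/absolute Hochschild homologies, and matching the hypotheses and the maps $I$, $K$, $\delta$.
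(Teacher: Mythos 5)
Your proposal matches the paper's own treatment exactly: Theorem \ref{JZ} is obtained by applying Theorem \ref{theo gap} verbatim to the fundamental nearly exact sequence (\ref{funda}), identifying the homologies of the three complexes with $H_*(C,X)$, $H_*(\Lambda,X)$ and $H_*(\Lambda|C,X)$ and the induced maps with $I$ and $K$. Your extra remark about the degree range $*>1$ is a sensible point of care that the paper glosses over, and your resolution of it is correct.
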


The name Jacobi-Zariski is given after \cite[p. 61]{ANDRE},\cite{Iyengar}, see also \cite{CLMS2021nearlyexactJZ}.

\medskip
Next we approximate the homology of the gap $H_*(\Ker \kappa/ \Im \iota)$ of the fundamental sequence (\ref{funda}). This way we reconsider  Theorem 5.1 of \cite{CLMS2021nearlyexactJZ}.

\begin{theo}\label{homology of JZ}
With the above notations, if $\Tor_*^C(\Lambda/C, (\Lambda/C)^{\otimes_C n})=0$ for $*>0$ and for all $n$,
then there is a spectral sequence converging to $H_*(\Ker \kappa/ \Im \iota)$  in large enough degrees. Its terms at page $1$ are
$$E^1_{p,q}= \Tor^{C^\mathsf{e}}_{q}(X,(\Lambda/C)^{\otimes_Cp}) \mbox{   \ \ for } p,q>0$$
and $0$ anywhere else.

\end{theo}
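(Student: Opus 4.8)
The plan is to analyze the gap complex $(\Ker\kappa/\Im\iota)_*$ of the fundamental nearly exact sequence (\ref{funda}) by exhibiting it — up to a homotopy/filtration argument — as the total complex of a double complex whose rows or columns compute relative Tor groups, and then to run a spectral sequence. First I would recall from \cite[Section 2]{CLMS2021nearlyexactJZ} the explicit shape of the three complexes $\check C_*(C,X)$, $\check C_*(\Lambda,X)$ and $\check C_*(\Lambda|C,X)$: in degree $n$ they are built from tensor powers of $\Lambda$ (resp.\ $C$) over $k$, bridged by $X$, and the relative version uses $\Lambda\otimes_C\cdots$. The quotient $\Lambda/C$ enters because $\Ker\kappa$ in degree $n$ consists of those chains that die in the relative complex, and the associated graded for the obvious filtration by ``number of genuine $\Lambda/C$-slots'' is a sum of terms of the form $X\otimes_{\text{something}}(\Lambda/C)^{\otimes_C p}\otimes(\text{resolution material})$.

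Second I would set up the hypothesis $\Tor_*^C(\Lambda/C,(\Lambda/C)^{\otimes_C n})=0$ for $*>0$ and all $n$: this is exactly the condition that guarantees the tensor powers $(\Lambda/C)^{\otimes_C p}$ behave homologically like flat objects when one resolves over $C$, so that when computing the relative Tor groups $\Tor^{C^{\mathsf e}}_q(X,-)$ against them no correction terms appear, and the bar-type resolution over $C$ that underlies the relative complex stays exact after tensoring. Concretely, I expect this makes the ``horizontal'' differential (the bar differential in the $\Lambda/C$ direction) acyclic except where it produces the relative Hochschild chains, collapsing one of the two spectral sequences of the double complex to give the identification of $H_*(\Ker\kappa/\Im\iota)$.

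Third, I would filter the resulting double complex the other way to read off page $1$. The $q$-direction is the $C^{\mathsf e}$-relative (equivalently absolute, after the hypothesis) resolution of $X$, producing $\Tor^{C^{\mathsf e}}_q(X,-)$; the $p$-direction records the number of tensor factors of $\Lambda/C$, i.e.\ the argument $(\Lambda/C)^{\otimes_C p}$. Hence $E^1_{p,q}=\Tor^{C^{\mathsf e}}_q\!\big(X,(\Lambda/C)^{\otimes_C p}\big)$ for $p,q>0$, and the boundary/edge terms ($p=0$ or $q=0$) vanish because those correspond precisely to the chains that were already quotiented out (the $\Im\iota$ part) or that survive to the relative complex (so they are not in $\Ker\kappa$), which is why the range is strictly positive and why convergence only holds in large enough degrees — the low-degree behaviour of the fundamental sequence (\ref{funda}), valid only for $*>1$, is not controlled.

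The main obstacle I anticipate is the bookkeeping that identifies the gap complex with the total complex of the stated double complex — matching signs after the ``usual change of signs'' of Remark \ref{double complex}, checking that the filtration by $\Lambda/C$-slots is compatible with both differentials, and verifying that the vanishing hypothesis is strong enough to kill every potential extra term rather than just the first page. A secondary subtlety is tracking in exactly which degrees the identification is valid, since (\ref{funda}) is only a nearly exact sequence for $*>1$ and the gap complex itself may differ from the honest cokernel/kernel quotient in the bottom degrees; this is precisely what forces the ``in large enough degrees'' qualifier in the statement, and I would handle it by truncating and invoking the convergence of the spectral sequence only above a fixed bound.
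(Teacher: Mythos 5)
Your strategy is the right one, and it is in fact the same one the paper relies on: the paper's own ``proof'' of this theorem is nothing more than a pointer to the proof of Theorem 5.1 of \cite{CLMS2021nearlyexactJZ} (``avoiding its first three lines''), and that proof proceeds essentially as you describe --- the gap complex of the fundamental sequence (\ref{funda}) is controlled by a filtration according to the number of tensor factors lying in $\Lambda/C$, the hypothesis $\Tor_*^C(\Lambda/C,(\Lambda/C)^{\otimes_C n})=0$ for $*>0$ is what makes the bar-type construction over $C$ compute the honest tensor powers $(\Lambda/C)^{\otimes_C p}$ so that page $1$ reduces to $\Tor^{C^{\mathsf e}}_q(X,(\Lambda/C)^{\otimes_C p})$, and the restriction to $p,q>0$ and to large degrees reflects that the $p=0$ column and the $q=0$ row are exactly the parts removed in passing to $\Ker\kappa/\Im\iota$, together with the fact that (\ref{funda}) is only a nearly exact sequence for $*>1$.

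That said, what you have written is a plan rather than a proof, and the gap sits exactly where you flag it: you never write down the double complex, never verify that its total complex (or the relevant filtration of the gap complex) computes $H_*(\Ker\kappa/\Im\iota)$ in large degrees, and never show concretely how the vanishing hypothesis eliminates the correction terms --- you only ``expect'' that it does. Since the whole content of the theorem is this identification (the spectral-sequence formalism is routine once the double complex is in hand), the proposal does not yet establish the statement. To complete it you would need the explicit material of \cite[Section 2 and Theorem 5.1]{CLMS2021nearlyexactJZ}: the degreewise description of $\check{C}_*(\Lambda,X)$, of $\Ker\kappa$ and $\Im\iota$, and of the differentials induced on the associated graded of the slot filtration, at which point the hypothesis is used to identify the vertical homology with the stated $\Tor$ groups.
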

\begin{proof}
Theorem 5.1 of  \cite{CLMS2021nearlyexactJZ} intends to approximate the gap of the Jacobi-Zariski sequence (\ref{JJZZ}). The proof there  focuses on the homology of the gap complex. This focus is now our aim.

Thus the proof of \cite[p. 1645, Theorem 5.1]{CLMS2021nearlyexactJZ} is relevant avoiding its first three lines. \qed
\end{proof}

In the following, we confirm that the previous tools provide an alternative proof of the results of A. Kaygun in \cite{KAYGUN} as  in  \cite[Theorem 6.2]{CLMS2021nearlyexactJZ}.

\begin{theo}
Let $C\subset \Lambda$ be an extension of $k$-algebras such that $\Lambda/C$ is a flat $C$-bimodule, and let $X$ be a $\Lambda$-bimodule. There is a Jacobi-Zariski long exact sequence
\begin{align*} \dots &\stackrel{\delta}{\to} H_{m}(C,X) \stackrel{I}{\to} H_{m}(\Lambda,X) \stackrel{K}{\to} H_{m}(\Lambda|C,X) \stackrel{\delta}{\to}H_{m-1}(C,X) \stackrel{I}{\to}\dots\\
& \stackrel{\delta}{\to} H_{n}(C,X) \stackrel{I}{\to} H_{n}(\Lambda,X) \stackrel{K}{\to} H_{n}(\Lambda|C,X)
\end{align*}
 ending at some $n$.
\end{theo}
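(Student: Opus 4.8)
The statement is meant to be recovered as a special case of the machinery just developed, so the plan is to reduce it to Theorem \ref{JZ}(a), and for that, to Theorem \ref{homology of JZ}. The key observation is that the hypothesis ``$\Lambda/C$ is a flat $C$-bimodule'' is strong enough to make the gap complex contractible in every degree (not merely in large degrees), so the Jacobi-Zariski sequence is genuinely long exact rather than merely nearly exact.

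\textbf{First step: verify the vanishing hypothesis of Theorem \ref{homology of JZ}.} I would check that flatness of $\Lambda/C$ as a $C$-bimodule implies $\Tor_*^C(\Lambda/C,(\Lambda/C)^{\otimes_C n})=0$ for $*>0$ and all $n$. Indeed, iterated tensor products over $C$ of a flat $C$-bimodule are again flat $C$-bimodules (flatness on each side is preserved by $-\otimes_C(\Lambda/C)$ and by $(\Lambda/C)\otimes_C-$), so $(\Lambda/C)^{\otimes_C n}$ is a flat left (and right) $C$-module for every $n$; hence $\Tor_*^C$ of it against anything vanishes in positive degrees. This is the only place flatness is really used in an essential way; after this, Theorem \ref{homology of JZ} supplies a spectral sequence converging, in large enough degrees, to $H_*(\Ker\kappa/\Im\iota)$ with first page $E^1_{p,q}=\Tor^{C^{\mathsf e}}_q(X,(\Lambda/C)^{\otimes_C p})$ for $p,q>0$ and zero elsewhere.

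\textbf{Second step: show the gap homology vanishes.} Here I would again invoke flatness: since $(\Lambda/C)^{\otimes_C p}$ is a flat $C$-bimodule, i.e.\ a flat $C^{\mathsf e}$-module, we get $\Tor^{C^{\mathsf e}}_q(X,(\Lambda/C)^{\otimes_C p})=0$ for all $q>0$. Thus the entire first page of the spectral sequence is zero, so $H_*(\Ker\kappa/\Im\iota)=0$ in large enough degrees. In particular the hypothesis $H_*(\Ker\kappa/\Im\iota)=0$ for $*\gg 0$ of Theorem \ref{JZ}(a) is satisfied, and that theorem produces a long exact Jacobi-Zariski sequence ending at some $n$, which is exactly the assertion. (One should note the fundamental nearly exact sequence (\ref{funda}) is only available for $*>1$, which is why the conclusion is an exact sequence ending at some $n$ rather than running all the way to degree $0$; this matches the statement.)

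\textbf{Main obstacle.} There is no deep obstacle: the genuine content has been absorbed into Theorems \ref{theo gap}, \ref{JZ} and \ref{homology of JZ}. The only point requiring a little care is the bookkeeping around the phrase ``in large enough degrees'': Theorem \ref{homology of JZ} only claims convergence of the spectral sequence to the gap homology in high degrees, so strictly the argument gives $H_*(\Ker\kappa/\Im\iota)=0$ for $*\gg 0$, which is precisely the hypothesis needed for part (a) of Theorem \ref{JZ} — no more is needed. One should also make explicit that passing from ``flat $C$-bimodule'' to ``flat $C^{\mathsf e}$-module'' is legitimate (a $C$-bimodule that is flat on each side separately is flat over $C^{\mathsf e}$ is the subtlety one must be slightly careful about, but in the finite-dimensional setting over a field, or using that $(\Lambda/C)^{\otimes_C p}$ is built from the flat bimodule $\Lambda/C$, this is routine). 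Assembling these remarks gives the proof.
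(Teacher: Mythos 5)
Your proof follows the paper's argument exactly: one-sided flatness of $\Lambda/C$ gives the Tor-vanishing hypothesis of Theorem \ref{homology of JZ}, flatness of $(\Lambda/C)^{\otimes_C p}$ over $C^{\mathsf e}$ kills the first page of the resulting spectral sequence, so the gap homology vanishes in high degrees and Theorem \ref{theo gap}~a) applies. The only caution is your parenthetical claim that a bimodule flat on each side is flat over $C^{\mathsf e}$ — that is false in general (e.g.\ $C$ itself); what is actually used, as in the paper's citation of \cite[Lemma 6.1]{CLMS2021nearlyexactJZ}, is that tensor powers of the $C^{\mathsf e}$-flat bimodule $\Lambda/C$ remain $C^{\mathsf e}$-flat.
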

\begin{proof}
$\Lambda/C$ is flat as a left and as a right $C$-module (see for instance the first part of the proof of \cite[Lemma 6.1]{CLMS2021nearlyexactJZ}), hence  $\Tor_*^C(\Lambda/C, (\Lambda/C)^{\otimes_C n})=0$ for $*>0$ and for all $n$.

Now by Theorem \ref{homology of JZ}, there is a spectral sequence converging to the homology of the gap of the fundamental sequence (\ref{funda}) in large enough degrees. The first page of this spectral sequence is
$$E^1_{p,q}= \Tor^{C^\mathsf{e}}_{q}(X,(\Lambda/C)^{\otimes_Cp}) \mbox{   \ \ for } p,q>0$$
and $0$ elsewhere. For $p>0$ we have that the $C^\mathsf{e}$-module $(\Lambda/C)^{\otimes_C p}$ is flat, see for instance \cite[Lemma 6.1]{CLMS2021nearlyexactJZ}. If $q>0$, then $\Tor^{C^\mathsf{e}}_{q}(X,(\Lambda/C)^{\otimes_Cp})=0$. Consequently the first page of the spectral sequence is $0$, so the homology of the gap of the fundamental sequence is $0$. Then by  Theorem \ref{theo gap} a), there exists a long Jacobi-Zariski exact sequence as stated. \qed

\end{proof}
We recall from \cite{CLMS2021nearlyexactJZ,CLMS2022bounded} that an extension of algebras $C\subset \Lambda$  is \textit{left (respectively right) bounded} if
\begin{itemize}
\item
$\Lambda/C$ is projective as a  left (respectively right) $C$-module,
\item
$\Lambda/C$  is tensor nilpotent as a $C$-bimodule,
\item
$\Lambda/C$ is of finite projective dimension as a $C$-bimodule.
\end{itemize}

We confirm now \cite[Theorem 6.5]{CLMS2021nearlyexactJZ} - see also \cite[Theorem 2.9]{CLMS2022bounded}, by means of the previous  results. We underline that we consider an extension of algebras $C\subset \Lambda$ which is not necessarily split, namely it may not exist a two sided ideal $I$ of $\Lambda$ such that $\Lambda=C\oplus I$.

\begin{theo}
With the above notations, assume that the extension is left or right bounded.
Then there is a Jacobi-Zariski long exact sequence
\begin{align*} \dots &\stackrel{\delta}{\to} H_{m}(C,X) \stackrel{I}{\to} H_{m}(\Lambda,X) \stackrel{K}{\to} H_{m}(\Lambda|C,X) \stackrel{\delta}{\to}H_{m-1}(C,X) \stackrel{I}{\to}\dots \\&\stackrel{\delta}{\to} H_{n}(C,X) \stackrel{I}{\to} H_{n}(\Lambda,X) \stackrel{K}{\to} H_{n}(\Lambda|C,X)
\end{align*}
ending at some $n$.
\end{theo}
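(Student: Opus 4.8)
The plan is to reduce the statement for a left or right bounded extension $C\subset\Lambda$ to the case already treated, namely to an application of Theorem \ref{homology of JZ} followed by Theorem \ref{theo gap} a). First I would record the consequences of boundedness that we actually need: since $\Lambda/C$ is projective as a left (or right) $C$-module and tensor nilpotent as a $C$-bimodule, each tensor power $(\Lambda/C)^{\otimes_C n}$ is again projective on the appropriate side, and these powers vanish for $n$ large. In particular $\Tor_*^C\!\big(\Lambda/C,(\Lambda/C)^{\otimes_C n}\big)=0$ for $*>0$ and all $n$ — for $n$ large because the second argument is $0$, and for small $n$ because $(\Lambda/C)^{\otimes_C n}$ is projective (say as a left $C$-module, in the left-bounded case, which already forces the $\Tor$ to vanish). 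This is exactly the hypothesis of Theorem \ref{homology of JZ}.

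Applying Theorem \ref{homology of JZ}, there is a spectral sequence converging in large degrees to $H_*(\Ker\kappa/\Im\iota)$, the homology of the gap of the fundamental sequence (\ref{funda}), with
$$E^1_{p,q}=\Tor^{C^\mathsf{e}}_q\!\big(X,(\Lambda/C)^{\otimes_C p}\big)\quad\text{for }p,q>0,$$
and zero elsewhere. Here I would use the remaining boundedness hypothesis — finite projective dimension of $\Lambda/C$ as a $C$-bimodule together with the projectivity on one side — to control the columns. Unlike the flat case in the previous theorem, the $E^1$ page is not identically zero; instead, for fixed $p$ the column $E^1_{p,\bullet}$ is the graded vector space $\Tor^{C^\mathsf{e}}_\bullet(X,(\Lambda/C)^{\otimes_C p})$, which is \emph{bounded} because $(\Lambda/C)^{\otimes_C p}$ has finite projective dimension as a $C^\mathsf{e}$-module (the tensor power of a module of finite projective dimension that is one-sided projective again has finite projective dimension as a bimodule — this is essentially \cite[Lemma 6.1]{CLMS2021nearlyexactJZ}). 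Combined with tensor nilpotency, which kills all columns with $p$ large, only finitely many boxes $E^1_{p,q}$ are nonzero. Hence the spectral sequence, and therefore $H_*(\Ker\kappa/\Im\iota)$, vanishes for $*$ large enough.

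Once $H_*(\Ker\kappa/\Im\iota)=0$ for $*\gg0$, Theorem \ref{theo gap} a) — applied to the fundamental short nearly exact sequence (\ref{funda}), whose homologies are $H_*(C,X)$, $H_*(\Lambda,X)$ and $H_*(\Lambda|C,X)$ — produces the connecting maps $\delta$ and the desired long exact sequence ending at some $n$. The main obstacle is the middle step: verifying carefully that finite bimodule projective dimension plus one-sided projectivity of $\Lambda/C$ forces each column of the $E^1$ page to be bounded (so that the finitely supported $E^1$ page collapses into vanishing homology in high degrees), rather than merely controlling $\Tor_*^C$ of the powers. The rest is assembling the cited results in order.
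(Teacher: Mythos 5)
Your proposal follows essentially the same route as the paper: verify the $\Tor_*^C$ hypothesis of Theorem \ref{homology of JZ} from one-sided projectivity and tensor nilpotency, then use finite bimodule projective dimension of $\Lambda/C$ (hence of its tensor powers, via \cite[Chapter IX, Proposition 2.6]{CARTANEILENBERG}) together with nilpotency to see that the $E^1$ page has finite support, so $H_*(\Ker\kappa/\Im\iota)=0$ in high degrees and Theorem \ref{theo gap} a) applies. The only difference is cosmetic: the paper makes your "finitely many boxes" step explicit with the bound that $E^1_{p,q}=0$ whenever $p+q\geq v(u+1)$, where $u=\pdim_{C^\mathsf{e}}(\Lambda/C)$ and $(\Lambda/C)^{\otimes_C v}=0$.
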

\begin{proof}
We have that $\Tor_*^C(\Lambda/C, (\Lambda/C)^{\otimes_C n})=0$ for $*>0$ and for all $n$. Hence by Theorem \ref{homology of JZ} there is a spectral sequence converging to $H_*(\Ker \kappa/ \Im \iota)$  in large enough degrees.  At page $1$ we have $E^1_{p,q}= \Tor^{C^\mathsf{e}}_{q}(X,(\Lambda/C)^{\otimes_Cp})$ for $p,q>0$ and $0$ otherwise. Let $u$ be the projective dimension of the $C$-bimodule $\Lambda/C$. Then  $(\Lambda/C)^{\otimes_C p}$ is of projective dimension at most $pu$, see \cite[Chapter IX, Proposition 2.6]{CARTANEILENBERG}.

Let $v$ be such that $(\Lambda/C)^{\otimes_Cv}=0$. Note that if $p\geq v$ or $q>pu$, then $E^1_{p,q}=0$. Therefore if $p+q\geq v(u+1)$, then $E^1_{p,q}=0$. That is the terms of the spectral sequence vanish at page $1$ for high enough total degrees. Hence $H_*(\Ker \kappa/ \Im \iota)=0$ for $*>>0$. Then Theorem \ref{theo gap} part a) provides the Jacobi-Zariski long exact sequence.

\qed
\end{proof}

\begin{rema} Consider a Morita context $\Lambda=\begin{pmatrix}
                                              A  & N \\
                                              M & B
                                            \end{pmatrix}_{\alpha, \beta}$ and let $C=A\times B$ be its diagonal subalgebra.
\begin{itemize}
  \item It may happen that  $(\Lambda,\begin{pmatrix}
                                              1  & 0 \\
                                              0& 0
                                            \end{pmatrix})$  is strongly stratifying but $C\subset \Lambda$ is not a bounded extension. Nevertheless in this case $\Lambda/C$ is tensor nilpotent, as we will see in the next section.
  \item If the  extension $C\subset \Lambda$ is bounded, then $M\otimes_A N=0$ if and only if the Morita context is strongly stratifying.
\end{itemize}

\end{rema}

\section{\sf Hochschild homology of strongly stratifying Morita contexts}\label{HH of strongly str MC}

The main purpose of this section is to prove that if a strongly stratifying Morita context \small $\Lambda=\begin{pmatrix}
                                              A  & N \\
                                              M & B
                                            \end{pmatrix}_{\alpha, \beta}$ \normalsize has finite Hochschild homology, then the same holds for its diagonal subalgebra $C=A\times B$ and consequently for each diagonal algebra $A$ and $B$.

We first recall some easy to show facts that we will use.

\begin{enumerate}
\renewcommand*\labelenumi{(F\theenumi)}
  \item  A left $C$-module $X$ is the direct sum of a left $A$-module ${}_aX$ and a left $B$-module ${}_bX$, where ${}_aX=(1,0)X$   and  ${}_bX=(0,1)X$.

      Conversely, if ${}_aX$ and ${}_bX$ are left $A$ and $B$-modules respectively, then ${}_aX\oplus {}_bX$ is a left $C$-module. Note that $B$ and $A$ annihilate respectively $_aX$, and  $_bX=0$.
  \item In particular a left $A$-module $U$ becomes a left $A\times B$-module through $U\oplus 0$, with $BU=0$.
  \item\label{zero tensor} Let $U$ (resp. $V$) be a right $A$ (resp. left $B$)-module, viewed as a right (resp. left) $C$-module. We have $U\otimes_C V=0.$
\item \label{tor Atimes B}
Let $Y$ (resp. $X$) be a right (resp. left) $C$-module and $Y=Y_a\oplus Y_b$ (resp. $X= {}_aX \oplus {}_bX$) be the decomposition as above. We have $$\Tor^{C}_*(Y_a\oplus Y_b, {}_aX \oplus {}_bX)= \Tor^A_* (Y_a,{}_aX)\oplus \Tor^B_*(Y_b,{}_bX).$$
Indeed a left $C$-projective resolution of ${}_aX \oplus {}_bX$ is given by the direct sum of a left $A$-projective resolution of ${}_aX$ and a left $B$-projective resolution of  ${}_bX$. Using (F\ref{zero tensor}) we infer the result.

\item\label{decomposition as Cbimodule}  As $C=A\times B$,
$$C^\mathsf{e}=
A^\mathsf{e} \times
(A\otimes B^{\mathsf{op}} ) \times
(B\otimes A^{\mathsf{op}})\times
B^\mathsf{e}.$$
Let $X$ be a $C$-bimodule. We have
\begin{align*}
X= &{}_aX_a \oplus {}_aX_b\ \oplus {}_bX_a\oplus{}_bX_b
\end{align*}
where  ${}_aX_a$,  ${}_aX_b$, ${}_bX_a$ and ${}_bX_b$ are respectively an $A$-bimodule, an $A-B$-bimodule, a $B-A$-bimodule and a $B$-bimodule.

\item \label{tor Cbimodules} Let $Y$ and $X$ be $C$-bimodules decomposed as above. After (F\ref{tor Atimes B}) we have
\begin{align*}
\Tor^{C^\mathsf{e}}_*(Y,X)= &\Tor_*^{A^\mathsf{e}}({}_aY_a,{}_aX_a)\oplus \Tor_*^{A\otimes B^{\mathsf{op}}}({}_bY_a,{}_aX_b)\oplus \\ &\Tor_*^{B\otimes A^{\mathsf{op}}}({}_aY_b,{}_bX_a)\oplus \Tor_*^{B^\mathsf{e}}({}_bY_b,{}_bX_b).
\end{align*}
\item\label{M oplus N} As mentioned the $B-A$-bimodule $M$ is viewed as a $C$-bimodule by extending the actions by zero, that is $AM=MB=0$. Analogously, $N$ is a $C$-bimodule.
This way $$\Lambda/C= M\oplus N$$ as $C$-bimodules.
\end{enumerate}

Next we show that the hypotheses of Theorem \ref{homology of JZ} hold  for a strongly stratifying Morita context.

\begin{prop}\label{preparation}
Let $\Lambda$ be a strongly stratifying Morita context $\begin{pmatrix}
                                              A  & N \\
                                              M & B
                                            \end{pmatrix}_{\alpha, \beta}$, and let $C=A\times B$ as a subalgebra of the Morita context.

We have that $\Tor_*^C(\Lambda/C, (\Lambda/C)^{\otimes_C n})=0$ for $*>0$ and for all $n$.
\end{prop}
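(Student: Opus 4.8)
The plan is to compute $(\Lambda/C)^{\otimes_C n}$ explicitly using the decomposition $\Lambda/C = M\oplus N$ of (F\ref{M oplus N}), reduce the $\Tor^C$ vanishing to a collection of $\Tor^A$ and $\Tor^B$ statements via (F\ref{tor Atimes B}), and then invoke the strongly stratifying hypotheses together with the vanishing $M\otimes_A N=0$ (and its consequence that longer alternating tensor products vanish). First I would unwind the tensor powers: since $M$ is a $B\text{-}A$-bimodule viewed as a $C$-bimodule with $AM=MB=0$ and $N$ is an $A\text{-}B$-bimodule with $NA=BN=0$, the only nonzero tensor products over $C$ among $M$ and $N$ are the alternating ones, $M\otimes_A N\otimes_B M\otimes_A \cdots$ and $N\otimes_B M\otimes_A N\otimes_B\cdots$; but $M\otimes_A N=0$ by hypothesis (2) of Definition \ref{exigent stratifying MC explicit}, so every such product of length $\geq 2$ is already zero after the first two factors. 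Hence $(\Lambda/C)^{\otimes_C n}$ is $M\oplus N$ for $n=1$ and $0$ for $n\geq 2$, and for $n\geq 2$ the statement $\Tor_*^C(\Lambda/C,(\Lambda/C)^{\otimes_C n})=0$ is trivially true.

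The remaining case is $n=1$, where I must show $\Tor_*^C(M\oplus N,\, M\oplus N)=0$ for $*>0$. By (F\ref{tor Atimes B}), decomposing each argument as its $A$-part plus its $B$-part: $M$ as a right $C$-module is a right $B$-module (its right $A$-action being zero is irrelevant; what matters is that $(0,1)$ acts as identity), so it sits in the $B$-summand, while $N$ as a right $C$-module is a right $A$-module, sitting in the $A$-summand; dually $M$ as a left $C$-module is a left $A$-module and $N$ is a left $B$-module. Therefore $\Tor_*^C(M\oplus N, M\oplus N)$ splits as $\Tor_*^A(N_{\text{as right }A}, M_{\text{as left }A})\oplus \Tor_*^B(M_{\text{as right }B}, N_{\text{as left }B})$, the two "mixed" terms vanishing by (F\ref{zero tensor})-type reasoning. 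These are exactly $\Tor_*^A(N,M)$ — wait, one must be careful with sidedness: $N$ is the $A\text{-}B$-bimodule, so as a right $A$-module it is not naturally a right $A$-module; rather $M$ is the $B\text{-}A$-bimodule, right $A$-module, and $N$ is left $A$-module, so the $A$-summand gives $\Tor_*^A(M,N)$, which vanishes for $*>0$ by hypothesis (1); symmetrically the $B$-summand gives $\Tor_*^B(N,M)$, which vanishes for $*>0$ by hypothesis (3). (Here I am using the identification of Proposition \ref{es iff es}, or rather just reading off the bimodule structures directly.)

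The main obstacle — really the only point requiring care — is bookkeeping the left/right module structures of $M$ and $N$ over $A$, $B$, and $C$ correctly, so that the splitting of (F\ref{tor Atimes B}) lands on the three hypotheses of Definition \ref{exigent stratifying MC explicit} with the correct variance; once that dictionary is set up the computation is immediate. I would therefore write the proof as: (i) observe $(\Lambda/C)^{\otimes_C n}=0$ for $n\geq 2$ using $M\otimes_A N=0$ and the vanishing of mixed tensor products over $C$ from facts (F\ref{zero tensor}) and (F\ref{M oplus N}), disposing of all $n\geq 2$; (ii) for $n=1$ apply (F\ref{tor Cbimodules}) (or directly (F\ref{tor Atimes B})) to split $\Tor_*^C(M\oplus N,M\oplus N)$ into an $A$-piece and a $B$-piece; (iii) identify these pieces as $\Tor_*^A(M,N)$ and $\Tor_*^B(N,M)$ and conclude by hypotheses (1) and (3). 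This gives the vanishing for all $*>0$ and all $n$, which is precisely what Theorem \ref{homology of JZ} requires.
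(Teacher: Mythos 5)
There is a genuine gap: your claim that $(\Lambda/C)^{\otimes_C n}=0$ for all $n\geq 2$ is false at $n=2$. The vanishing $M\otimes_A N=0$ kills the summand $M\otimes_C N$, but the other surviving summand of $(M\oplus N)^{\otimes_C 2}$ is $N\otimes_C M=N\otimes_B M$, which contains no consecutive factor $M\otimes_A N$ and is not zero in general — the definition of a strongly stratifying Morita context imposes no condition on $N\otimes_B M$, and indeed in Example \ref{LIU} one has $B=k$ and $N\otimes_B M=N\otimes_k M\neq 0$ (the whole point of the map $\alpha\colon N\otimes_B M\to A$ being possibly nonzero in Section 6 depends on this). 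Your "alternating products die after two factors" argument is only valid for $n\geq 3$, where every length-$n$ alternating word must contain a consecutive pair $M\otimes_A N$.

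Consequently the case $n=2$ requires a real argument that your proposal omits entirely. One must show $\Tor_*^C(M\oplus N,\,N\otimes_B M)=0$ for $*>0$. Since $N\otimes_B M$ is an $A$-bimodule (hence a left $A$-module as a left $C$-module), the decomposition (F\ref{tor Atimes B}) reduces this to $\Tor_*^A(M,\,N\otimes_B M)$, and this is where the paper invokes the Cartan--Eilenberg associativity theorem \cite[Theorem 2.8, p.~167]{CARTANEILENBERG}: because $\Tor_n^A(M,N)=0=\Tor_n^B(N,M)$ for $n>0$, one may move the parenthesis to get $\Tor_*^A(M,N\otimes_B M)\cong\Tor_*^B(M\otimes_A N,\,M)=\Tor_*^B(0,M)=0$. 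This step uses all three hypotheses of Definition \ref{strongly stratifying MC explicit} simultaneously and is the heart of the proposition. Your treatment of $n=1$ (splitting into $\Tor_*^A(M,N)\oplus\Tor_*^B(N,M)$, after sorting out the sidedness) and of $n\geq 3$ is correct and agrees with the paper.
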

\begin{proof}
As noted in  (F\ref{M oplus N}), $\Lambda/C= M\oplus N$ as $C$-bimodules. To compute $$(M\oplus N)^{\otimes_C2}$$ note that $M\otimes_CM =0=N\otimes_CN$ by (F\ref{zero tensor}). Analogously, $N\otimes_C M= N\otimes_B M$ and $M\otimes_C N= M\otimes_A N$; the latter is $0$ since the Morita context is strongly stratifying. Finally
\begin{equation}\label{2explicit}(M\oplus N)^{\otimes_C2} = N\otimes_B M.\end{equation}
Moreover
\begin{equation}\label{3 zero}
(M\oplus N)^{\otimes_C3}= M\otimes_A N\otimes_B M =0.
\end{equation}

For $n\geq 3$, we infer $(M\oplus N)^{\otimes_Cn}=0$ and $\Tor_*^C(\Lambda/C, (\Lambda/C)^{\otimes_C n})=0.$

For $n=2$ we have
\begin{align*}\Tor_*^C(M\oplus N, (M\oplus N)^{\otimes_C2} )&= \Tor_*^C(M\oplus N, N\otimes_B M )\\&= \Tor_*^A(M, N\otimes_B M )\\
&\stackrel{\star}{=}\Tor_*^B(M\otimes_AN, M) \\
&=0
\end{align*}
The equality $\stackrel{\star}{=}$ is ensured by \cite[Theorem 2.8, p.167]{CARTANEILENBERG} in case the following takes place
$$\Tor_n^A(M,N)=0=\Tor_n^B(N,M) \mbox{ \ \ \ for }n>0.$$ Indeed, this holds since the Morita context is strongly stratifying.

For $n=1$
 we have
\begin{align*}
  \Tor_*^C(\Lambda/C, (\Lambda/C))&=\Tor_*^C(M\oplus N, M\oplus N)\\
&= \Tor_*^A(M, N)\oplus \Tor_*^B(N, M)
\end{align*}
according to (F\ref{tor Atimes B}).
Now $\Tor_*^A(M, N)=0$ for $*>0$ since the Morita context is stratifying. Moreover $\Tor_*^B(N, M)=0$ for $*>0$ since it is strongly stratifying.
\qed
\end{proof}
We will now show that the terms at the first page of the spectral sequence of Theorem \ref{homology of JZ} for $X=\Lambda$ vanishes.

\begin{lemm}\label{CEzero}
Let $\Lambda =\begin{pmatrix}
                                              A  & N \\
                                              M & B
                                            \end{pmatrix}_{\alpha, \beta}$ be a strongly stratifying Morita context. For $n\geq 0$
                                           $$\Tor_n^{A\otimes B^{\mathsf{ op}}} (M,N)=0= \Tor_n^{B\otimes A^{\mathsf{op}}}(N,M).$$
\end{lemm}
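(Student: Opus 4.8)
The statement asserts that, for a strongly stratifying Morita context, the ``off-diagonal'' Tor groups $\Tor_n^{A\otimes B^{\mathsf{op}}}(M,N)$ and $\Tor_n^{B\otimes A^{\mathsf{op}}}(N,M)$ vanish for all $n\geq 0$. By symmetry it suffices to treat one of them, say $\Tor_n^{A\otimes B^{\mathsf{op}}}(M,N)$; the other follows by swapping the roles of $A$ and $B$ (and $M$ and $N$). The plan is to reduce a Tor computation over the mixed algebra $A\otimes B^{\mathsf{op}}$ to a ``change of rings / base change'' computation that combines a one-sided projective resolution over $A$ with a one-sided projective resolution over $B^{\mathsf{op}}$, and then to invoke the strong stratifying hypotheses.

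\textbf{Key steps.} First I would resolve $M$ as a right $A$-module, but only after tensoring with $B^{\mathsf{op}}$ to land in $A\otimes B^{\mathsf{op}}$-modules: recall $M$ is a $B\text{-}A$-bimodule, hence a right $A\otimes B^{\mathsf{op}}$-module in the appropriate reading, and similarly $N$ is a left $A\otimes B^{\mathsf{op}}$-module. The standard tool here is the K\"unneth-type spectral sequence for Tor over a tensor product of algebras, \cite[Chapter IX]{CARTANEILENBERG}: there is a spectral sequence
\begin{equation*}
E^2_{p,q}=\bigoplus_{i+j=q}\Tor_p^{\,?}\big(\Tor_i^A(M,-),\Tor_j^{B^{\mathsf{op}}}(-,N)\big)\ \Longrightarrow\ \Tor_{p+q}^{A\otimes B^{\mathsf{op}}}(M,N),
\end{equation*}
or more cleanly: choosing a projective resolution $P_\bullet\to M$ of right $A$-modules and a projective resolution $Q_\bullet\to N$ of left $B^{\mathsf{op}}$-modules, the complex $P_\bullet\otimes_k Q_\bullet$ is a projective resolution over $A\otimes B^{\mathsf{op}}$ only if things stay flat, but in any case the homology of $M\otimes_{A\otimes B^{\mathsf{op}}} (P_\bullet\otimes Q_\bullet)$ computes what we want, and each term of that complex is of the form $(M\otimes_A P_i)\otimes_?(Q_j\otimes_{B^{\mathsf{op}}}N)$. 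The crucial simplification is that $M$ as a $C$-bimodule has $AM=MB=0$ by (F\ref{M oplus N}); I would use this to show directly that $M\otimes_{A\otimes B^{\mathsf{op}}}N$ already vanishes, which handles $n=0$, and more generally that every term in the bicomplex computing these Tor groups is built from $M\otimes_A N$, hence zero.

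Actually the cleanest route is probably this: since $M$ is a $B\text{-}A$-bimodule and $N$ is an $A\text{-}B$-bimodule, the Tor group $\Tor_n^{A\otimes B^{\mathsf{op}}}(M,N)$ should be identifiable — via a suitable associativity/change-of-rings isomorphism — with (a piece of) $\Tor^A_*(M,N)$ and $\Tor^{B}_*(N,M)$, both of which vanish in positive degrees by hypotheses (1) and (3) of Definition \ref{strongly stratifying MC explicit}, while the degree-zero piece is $M\otimes_A N=0$ by hypothesis (2). Concretely I would take a resolution of $N$ by projective $A\text{-}B$-bimodules obtained by tensoring an $A$-projective resolution of $N$ on the left with a $B^{\mathsf{op}}$-projective resolution on the right (a tensor product of resolutions), apply $M\otimes_{A\otimes B^{\mathsf{op}}}(-)$, and observe that $M\otimes_{A\otimes B^{\mathsf{op}}}(P\otimes_k Q)=(M\otimes_A P)\otimes_B Q$ for $P$ a projective right $A$-module and $Q$ a projective left $B$-module; the resulting double complex has homology governed by $\Tor^A(M,N)$ in one direction and $\Tor^B(-,M)$ in the other, and both collapse.

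\textbf{Main obstacle.} The technical heart — and the step I expect to require the most care — is setting up the correct K\"unneth / base-change spectral sequence over $A\otimes B^{\mathsf{op}}$ and verifying the flatness hypotheses needed to apply \cite[Chapter IX]{CARTANEILENBERG} (one needs, e.g., that $\Tor^k$ vanishes so that the tensor product of resolutions is again a resolution, which over a field is automatic). Once the bookkeeping identifies every contributing term with a summand of $\Tor^A_*(M,N)$ or $\Tor^B_*(N,M)$ or with $M\otimes_A N$, the conclusion is immediate from the three defining conditions of a strongly stratifying Morita context. The $n=0$ case, $M\otimes_{A\otimes B^{\mathsf{op}}}N=0$, also deserves an explicit line: it follows because in this tensor product the $A$-action on $M$ (which is zero, as $M$ sits in the $C$-bimodule with $AM=0$) must be compatible with the $A$-action on $N$, forcing the tensor to collapse — or more simply, because $M\otimes_{A\otimes B^{\mathsf{op}}}N$ is a quotient of $M\otimes_A N=0$.
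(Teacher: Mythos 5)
Your core idea for the first Tor group is the right one and is essentially the paper's: reduce $\Tor^{A\otimes B^{\mathsf{op}}}_*(M,N)$ to $\Tor^A_*(M,N)$, which vanishes in \emph{all} degrees (positive degrees by condition (1), degree zero by condition (2)). The paper does this in one line with the associativity spectral sequence of Cartan--Eilenberg [p.~347, (5a)], $H_q(B,\Tor_p^A(M,N))\Rightarrow \Tor_{p+q}^{A\otimes B^{\mathsf{op}}}(M,N)$, whose second page is identically zero; note that condition (3) is not even needed for this half. Your K\"unneth set-up, by contrast, is shaky: $M$ and $N$ are not external tensor products of one-sided modules, so a complex $P_\bullet\otimes_k Q_\bullet$ built from two one-sided resolutions of $N$ does not resolve $N$ as an $A\otimes B^{\mathsf{op}}$-module, and the displayed $E^2$-term with the unspecified base ring is not a standard spectral sequence. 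This part is repairable by switching to the Cartan--Eilenberg associativity spectral sequence.

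The genuine gap is the opening reduction ``by symmetry it suffices to treat one of them.'' The definition of a strongly stratifying Morita context is \emph{not} symmetric under swapping $(A,M,N)\leftrightarrow(B,N,M)$: condition (2) asserts $M\otimes_A N=0$, but there is no hypothesis $N\otimes_B M=0$, and indeed $N\otimes_B M\neq 0$ in the paper's examples (it is the source of $\alpha$). Rerunning your argument with the roles swapped would require the vanishing of $\Tor^B_*(N,M)$ in all degrees, and this fails in degree zero. The correct way to deduce the second vanishing from the first --- and what the paper does --- is the identity $B\otimes A^{\mathsf{op}}=(A\otimes B^{\mathsf{op}})^{\mathsf{op}}$ together with $\Tor_n^{D}(X,Y)=\Tor_n^{D^{\mathsf{op}}}(Y,X)$, which shows that $\Tor_n^{B\otimes A^{\mathsf{op}}}(N,M)$ is literally the same vector space as $\Tor_n^{A\otimes B^{\mathsf{op}}}(M,N)$.
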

\begin{proof}
We make use of the ``associativity formula" of H. Cartan and S. Eilenberg \cite[p. 347, (5a)]{CARTANEILENBERG}, namely there is a spectral sequence
$$H_q(B, \Tor_p^A(M,N))\Rightarrow \Tor_n^{A\otimes B^{\mathsf{ op}}} (M,N).$$
Since the Morita context is strongly stratifying, $\Tor_p^A(M,N)=0$ for $p\geq 0$. Hence $H_q(B, \Tor_p^A(M,N))=0$ for all $p$ and $q$, and $\Tor_n^{A\otimes B^{\mathsf{ op}}} (M,N)=0$ for all $n$.

Given an algebra $D$, a right $D$-module $X$ and a left $D$-module $Y$, it is well known that for all $n$
$$\Tor_n^D(X,Y)=\Tor_n^{D^{\mathsf{op}}}(Y,X).$$
Hence
$$\Tor_q^{B\otimes A^{\mathsf{op}}}(N,M)=\Tor_q^{A\otimes B^{\mathsf{ op}}} (M,N)=0.$$\qed
\end{proof}

\begin{prop}\label{first page vanish}
 Let $\Lambda$ be a strongly stratifying Morita context $\begin{pmatrix}
                                              A  & N \\
                                              M & B
                                            \end{pmatrix}_{\alpha, \beta}$, and let $C=A\times B.$
We have
 $$ \Tor^{C^\mathsf{e}}_{q}(\Lambda,(\Lambda/C)^{\otimes_Cp})=0 \mbox{   \ \ for } p,q>0.$$
\end{prop}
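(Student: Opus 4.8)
The goal is to show that $\Tor^{C^\mathsf{e}}_q(\Lambda,(\Lambda/C)^{\otimes_C p})=0$ for $p,q>0$. The plan is to reduce everything to the decompositions recorded in facts (F4)--(F7) together with Lemma \ref{CEzero} and Proposition \ref{preparation}. First I would treat the case $p\geq 3$: by (\ref{3 zero}) in the proof of Proposition \ref{preparation} we have $(\Lambda/C)^{\otimes_C p}=(M\oplus N)^{\otimes_C p}=0$ whenever $p\geq 3$, so the $\Tor$ groups vanish trivially for those $p$. Thus only $p=1$ and $p=2$ remain.

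For $p=1$ we have $(\Lambda/C)^{\otimes_C 1}=M\oplus N$ as $C$-bimodules by (F\ref{M oplus N}). I would decompose $\Lambda$ as a $C$-bimodule according to (F\ref{decomposition as Cbimodule}): writing $\Lambda=\begin{pmatrix} A & N \\ M & B\end{pmatrix}$, its $C$-bimodule pieces are ${}_a\Lambda_a=A$, ${}_a\Lambda_b=N$, ${}_b\Lambda_a=M$ and ${}_b\Lambda_b=B$. Similarly $M$ sits in the ${}_b(-)_a$ slot and $N$ in the ${}_a(-)_b$ slot (all other actions zero). Applying the $\Tor^{C^\mathsf{e}}$ decomposition of (F\ref{tor Cbimodules}), all the matching summands involving $M$ are of the form $\Tor_q^{B\otimes A^{\mathsf{op}}}({}_a\Lambda_b,\, {}_bM_a)=\Tor_q^{B\otimes A^{\mathsf{op}}}(N,M)$, which vanishes for all $q$ by Lemma \ref{CEzero}; and the summands involving $N$ are $\Tor_q^{A\otimes B^{\mathsf{op}}}(M,N)$, again zero by Lemma \ref{CEzero}. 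Hence $\Tor^{C^\mathsf{e}}_q(\Lambda,M\oplus N)=0$ for all $q>0$ (in fact for all $q$), settling $p=1$.

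For $p=2$, by (\ref{2explicit}) we have $(\Lambda/C)^{\otimes_C 2}=N\otimes_B M$, which is an $A$-bimodule, i.e. it lies purely in the ${}_a(-)_a$ slot. So by (F\ref{tor Cbimodules}) only the $A^\mathsf{e}$-summand survives: $\Tor^{C^\mathsf{e}}_q(\Lambda,N\otimes_B M)=\Tor_q^{A^\mathsf{e}}({}_a\Lambda_a,\,N\otimes_B M)=\Tor_q^{A^\mathsf{e}}(A,N\otimes_B M)=H_q(A,N\otimes_B M)$. To see this vanishes for $q>0$, I would use the Cartan--Eilenberg ``associativity'' spectral sequence (as in Proposition \ref{preparation}), or directly the change-of-rings argument: since $\Tor^A_*(M,N)=0=\Tor^B_*(N,M)$ for $*>0$ (stratifying resp.\ strongly stratifying), the bimodule $N\otimes_B M$ has a well-behaved resolution and Hochschild homology $H_*(A,N\otimes_B M)$ computes the same thing as the cyclic-type double complex whose total homology was already shown to vanish. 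Concretely, $H_q(A,N\otimes_B M)\cong \Tor_q^{A^\mathsf{e}}(A,N\otimes_B M)$, and one can identify this with a $\Tor$ over $B$ of $M\otimes_A N=0$ via the same $\star$-type manipulation used in Proposition \ref{preparation}.

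\textbf{Main obstacle.} The one genuinely nonformal point is the $p=2$ case: showing $H_q(A,N\otimes_B M)=0$ for $q>0$. Everything else is bookkeeping with the product-algebra decompositions. The cleanest route is probably to observe that $N\otimes_B M$ as an $A$-bimodule is, up to the vanishing of higher $\Tor^B$, computed by tensoring a projective $A$-bimodule resolution issues away — more precisely to invoke \cite[Theorem 2.8, p.167]{CARTANEILENBERG} (exactly as in Proposition \ref{preparation}) to get $\Tor_*^{A^\mathsf{e}}(A, N\otimes_B M)\cong$ a $\Tor$ group built from $M\otimes_A N$, which is $0$. I would double-check the bimodule-structure compatibilities needed to apply that Cartan--Eilenberg result in the two-sided (enveloping algebra) setting, since that is where a sign or side-convention slip is most likely.
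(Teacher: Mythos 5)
Your proposal is correct and follows essentially the same route as the paper: kill $p\geq 3$ by tensor-nilpotence, handle $p=1$ via the decomposition (F\ref{tor Cbimodules}) and Lemma \ref{CEzero}, and reduce $p=2$ to $\Tor_q^{A^\mathsf{e}}(A,N\otimes_B M)$. The only (harmless) divergence is at the last step: the paper invokes the associativity formula \cite[p.347 (4a)]{CARTANEILENBERG} to identify $\Tor_q^{A^\mathsf{e}}(A,N\otimes_B M)$ with $\Tor_q^{A^{\mathsf{op}}\otimes B}(N,M)$, which vanishes by Lemma \ref{CEzero}, whereas you aim for an identification with a $\Tor$ built from $M\otimes_A N=0$; both work under the same $\Tor$-vanishing hypotheses, and you correctly flag this two-sided change-of-rings step as the one point requiring care.
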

\begin{proof}
By (\ref{3 zero}) we have $(\Lambda/C)^{\otimes_Cp}=0$ for $p\geq 3$, thus
$\Tor^{C^\mathsf{e}}_{q}(\Lambda,(\Lambda/C)^{\otimes_Cp})=0$ for $p\geq 3$.

The decomposition of (F\ref{decomposition as Cbimodule}) of $\Lambda$ as a $C$-bimodule is $$\Lambda = A\oplus N \oplus M\oplus B.$$

For $p=1$, according to (F\ref{tor Cbimodules}) we have
$$\Tor^{C^\mathsf{e}}_{q}(A\oplus N \oplus M\oplus B,\  M\oplus N)=\Tor_q^{B\otimes A^{\mathsf{op}}}(N,M) \oplus \Tor_q^{A\otimes B^{\mathsf{ op}}} (M,N).$$
Both summands vanish by the previous Lemma \ref{CEzero}.

For $p=2$, we know by (\ref{2explicit}) that $(M\oplus N)^{\otimes_C 2}=N\otimes_B M$. Hence
\begin{align*}
\Tor^{C^\mathsf{e}}_{q}(A\oplus N \oplus M\oplus B,\ (M\oplus N)^{\otimes_C 2} )= \Tor^{A^\mathsf{e}}_{q}(A, N\otimes_B M).
\end{align*}
We show next that the hypotheses of \cite[p.347 (4a)]{CARTANEILENBERG} hold: firstly note that $\Tor_n^A(A,N)=0$ for $n>0$. Secondly, since the Morita context is strongly stratifying, $\Tor_n^B(N,M)=0$ for $n>0$. Therefore there is an isomorphism
$$\Tor_q^{A^\mathsf{e}}(A, N\otimes_BM) \simeq \Tor_q^{A^\mathsf{op}\otimes B}(A\otimes_A N, M).$$
The latter is $\Tor_q^{A^\mathsf{op}\otimes B}(N, M),$ which is zero by the previous Lemma \ref{CEzero}.\qed
\end{proof}

\begin{theo}\label{JZ esmc*}
 Let $\Lambda$ be an algebra with a distinguished idempotent $e$, such that $\Lambda e \Lambda$ is a strongly stratifying ideal  and let $C= e\Lambda e \times f\Lambda f$, where $f=1-e$.

There exists a Jacobi-Zariski long exact sequence
\begin{align*} \dots &\stackrel{\delta}{\to} H_{m}(C,\Lambda) \stackrel{I}{\to} H_{m}(\Lambda,\Lambda) \stackrel{K}{\to} H_{m}(\Lambda|C,\Lambda) \stackrel{\delta}{\to}H_{m-1}(C,\Lambda) \stackrel{I}{\to}\dots\\
& \stackrel{\delta}{\to} H_{n}(C,\Lambda) \stackrel{I}{\to} H_{n}(\Lambda,\Lambda) \stackrel{K}{\to} H_{n}(\Lambda|C,\Lambda)
\end{align*}
 ending at some $n$.
\end{theo}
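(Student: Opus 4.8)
The plan is to deduce Theorem \ref{JZ esmc*} by combining the machinery of Section \ref{JJJZZ} with the computations just carried out in Propositions \ref{preparation} and \ref{first page vanish}. By Proposition \ref{es iff es}, the hypothesis that $\Lambda e\Lambda$ is strongly stratifying is equivalent to the associated Morita context $\begin{pmatrix} e\Lambda e & e\Lambda f\\ f\Lambda e & f\Lambda f\end{pmatrix}_{\alpha,\beta}$ being strongly stratifying, so we may freely pass to the Morita context formulation and write $A=e\Lambda e$, $B=f\Lambda f$, $M=f\Lambda e$, $N=e\Lambda f$, so that $C=A\times B$ is the diagonal subalgebra and $\Lambda/C = M\oplus N$ as $C$-bimodules by (F\ref{M oplus N}).

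First I would verify that the extension $C\subset\Lambda$ satisfies the hypothesis of Theorem \ref{homology of JZ}: this is exactly the content of Proposition \ref{preparation}, which states that $\Tor_*^C(\Lambda/C,(\Lambda/C)^{\otimes_C n})=0$ for $*>0$ and all $n$. Hence Theorem \ref{homology of JZ} applies with $X=\Lambda$ and produces a spectral sequence converging in large enough degrees to $H_*(\Ker\kappa/\Im\iota)$, the homology of the gap complex of the fundamental nearly exact sequence (\ref{funda}), with first page $E^1_{p,q}=\Tor^{C^\mathsf{e}}_q(\Lambda,(\Lambda/C)^{\otimes_C p})$ for $p,q>0$ and zero elsewhere. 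The next step is to invoke Proposition \ref{first page vanish}, which says precisely that $\Tor^{C^\mathsf{e}}_q(\Lambda,(\Lambda/C)^{\otimes_C p})=0$ for all $p,q>0$; therefore the entire first page vanishes, the spectral sequence degenerates, and $H_*(\Ker\kappa/\Im\iota)=0$ in large enough degrees — in fact one even gets $H_*=0$ for $*>0$ once $p,q\ge 1$ are forced, but vanishing in high degrees is all we need.

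With the gap homology vanishing in high degrees, I would apply Theorem \ref{theo gap} part a) (equivalently Theorem \ref{JZ} part a)) to the fundamental nearly exact sequence $0\to\check{C}_*(C,\Lambda)\stackrel{\iota}{\to}\check{C}_*(\Lambda,\Lambda)\stackrel{\kappa}{\to}\check{C}_*(\Lambda|C,\Lambda)\to 0$, whose homologies are $H_*(C,\Lambda)$, $H_*(\Lambda,\Lambda)$ and $H_*(\Lambda|C,\Lambda)$ respectively. Since the gap homology is zero for $*>>0$, part a) yields a genuine long exact sequence ending at some $n$, which is exactly the asserted Jacobi-Zariski long exact sequence. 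I do not anticipate a serious obstacle here: all the hard work — the $C$-bimodule decompositions, the tensor-nilpotence computation $(\Lambda/C)^{\otimes_C 3}=0$, and the two applications of the Cartan–Eilenberg associativity spectral sequences in Lemma \ref{CEzero} and Proposition \ref{first page vanish} — has already been done; the only point requiring a little care is matching the hypotheses of Theorem \ref{homology of JZ} and Theorem \ref{theo gap} a) to the conclusions of Propositions \ref{preparation} and \ref{first page vanish}, and noting that $X=\Lambda$ is a legitimate choice of $\Lambda$-bimodule so that $H_*(\Lambda,\Lambda)=HH_*(\Lambda)$.
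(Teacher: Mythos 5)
Your proposal is correct and follows essentially the same route as the paper's proof: invoke Proposition \ref{preparation} to make the spectral sequence of Theorem \ref{homology of JZ} (with $X=\Lambda$) available, use Proposition \ref{first page vanish} to kill its first page, conclude that $H_*(\Ker\kappa/\Im\iota)=0$ for $*>>0$, and apply Theorem \ref{JZ} part a). No gaps.
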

\begin{proof}
Consider the corresponding strongly stratifying Morita context \\ $\Lambda=\begin{pmatrix}
                                              A  & N \\
                                              M & B
                                            \end{pmatrix}_{\alpha, \beta}$
where $A=e\Lambda e$ and $B=f\Lambda f$, thus $C=A\times B$. We will show that we can use part  a) of Theorem \ref{JZ}, that is we assert  $H_*(\Ker \kappa /\Im \iota)=0$ for $*>>0$. Indeed, the spectral sequence of Theorem \ref{homology of JZ} is available if we prove that $\Tor_*^C(\Lambda/C, (\Lambda/C)^{\otimes_C n})=0$ for $*>0$ and for all $n$. This follows from Proposition \ref{preparation}.

Moreover the first page of this spectral sequence  is
$$E^1_{p,q}= \Tor^{C^\mathsf{e}}_{q}(\Lambda,(\Lambda/C)^{\otimes_Cp}) \mbox{   \ \ for } p,q>0$$
and $0$ anywhere else, see Theorem \ref{homology of JZ}  for $X=\Lambda$.

This first page vanishes, due to Proposition \ref{first page vanish}. We have proved that $$H_*(\Ker \kappa /\Im \iota)=0 \mbox{ for } *>>0$$ and of Theorem \ref{JZ} a) provides the existence of the Jacobi-Zariski long exact sequence.\qed
\end{proof}

\begin{lemm}\label{0 para m mayor que 2}
In the situation of Theorem \ref{JZ esmc*}, let $X$ be a $\Lambda$-bimodule. We have that $H_{m}(\Lambda|C, X)=0$ for $m\geq 3$.
\end{lemm}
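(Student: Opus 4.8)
The statement to prove is that for a strongly stratifying ideal $\Lambda e \Lambda$ with $C = e\Lambda e \times f\Lambda f$, one has $H_m(\Lambda|C, X) = 0$ for all $m \geq 3$ and any $\Lambda$-bimodule $X$. The relative Hochschild homology $H_*(\Lambda|C,X) = \Tor_*^{\Lambda^\mathsf{e}|C^\mathsf{e}}(X,\Lambda)$ is computed by a relative bar-type resolution, and the natural vanishing mechanism here is that $\Lambda/C = M \oplus N$ is tensor nilpotent as a $C$-bimodule: by equation (\ref{3 zero}), $(\Lambda/C)^{\otimes_C 3} = M\otimes_A N \otimes_B M = 0$, so $(\Lambda/C)^{\otimes_C p} = 0$ for all $p \geq 3$.

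\textbf{Key steps.} First I would invoke the standard relative bar resolution of $\Lambda$ as a $\Lambda^\mathsf{e}$-module relative to $C^\mathsf{e}$: the complex with terms $\Lambda \otimes_C (\Lambda/C)^{\otimes_C p} \otimes_C \Lambda$ in degree $p$ (this is the relative bar complex built from the extension $C \subset \Lambda$, which is $C^\mathsf{e}$-split and relatively projective). Since $(\Lambda/C)^{\otimes_C p} = 0$ for $p \geq 3$, this resolution has length at most $2$; hence $\Tor_p^{\Lambda^\mathsf{e}|C^\mathsf{e}}(X,\Lambda) = 0$ automatically for $p \geq 3$, because one can compute relative Tor from a relative projective resolution of $\Lambda$ (the first variable) of length $2$. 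Concretely, $H_m(\Lambda|C,X)$ is the homology in degree $m$ of a complex whose terms $X \otimes_{C^\mathsf{e}} \left(\Lambda \otimes_C (\Lambda/C)^{\otimes_C p} \otimes_C \Lambda\right) \cong X \otimes_{C} (\Lambda/C)^{\otimes_C p} \otimes_{C} \cdots$ vanish for $p \geq 3$, and this forces $H_m = 0$ for $m \geq 3$. Alternatively, and perhaps cleanest given the machinery already set up, I would reuse the spectral sequence of Theorem \ref{homology of JZ}: its first page $E^1_{p,q}$ is concentrated in columns $p = 1, 2$ (since $(\Lambda/C)^{\otimes_C p} = 0$ for $p\geq 3$ kills all higher columns, and column $p=0$ is zero by construction), so the abutment $H_*(\Ker\kappa/\Im\iota)$ vanishes in degrees $\geq 3$; combining this with the fundamental nearly exact sequence (\ref{funda}) and the fact that $H_m(C,X) = 0$ for $m$ large is not quite what is needed — so I would instead argue directly from the relative bar complex.

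\textbf{Cleanest route.} I would spell out the relative bar complex $\cdots \to \Lambda\otimes_C \overline{\Lambda}\otimes_C\overline{\Lambda}\otimes_C\Lambda \to \Lambda\otimes_C\overline{\Lambda}\otimes_C\Lambda \to \Lambda\otimes_C\Lambda \to \Lambda \to 0$ where $\overline{\Lambda} = \Lambda/C$, note it is a relative projective resolution of $\Lambda$ over $\Lambda^\mathsf{e}$ relative to $C^\mathsf{e}$ (it admits a $C^\mathsf{e}$-contracting homotopy by \cite[p.~250]{HOCHSCHILD1956}), and observe that by (\ref{3 zero}) every term in homological degree $\geq 3$ is zero since it contains a tensor factor $\overline{\Lambda}^{\otimes_C p}$ with $p \geq 3$. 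Applying $X \otimes_{\Lambda^\mathsf{e}} (-)$ yields a complex concentrated in degrees $0,1,2$, whose homology in degree $m \geq 3$ is therefore $0$. Since this complex computes $\Tor_*^{\Lambda^\mathsf{e}|C^\mathsf{e}}(X,\Lambda) = H_*(\Lambda|C,X)$, we conclude $H_m(\Lambda|C,X) = 0$ for $m\geq 3$.

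\textbf{Main obstacle.} The only delicate point is confirming that relative $\Tor$ in the first variable can be computed from this particular (truncated) relative projective resolution of $\Lambda$ — i.e., that the relative bar resolution of $\Lambda$ over $\Lambda^\mathsf{e}$ relative to $C^\mathsf{e}$ really is the standard one and that it genuinely terminates, rather than merely having vanishing homology only after tensoring. This is where one must be careful that tensor nilpotence of $\overline{\Lambda}$ as a $C$-bimodule (two-sided, via $\otimes_C$) is exactly what makes the bar terms $\Lambda \otimes_C \overline{\Lambda}^{\otimes_C p}\otimes_C \Lambda$ themselves vanish for $p \geq 3$, not just their homology. Given Proposition \ref{preparation} and equation (\ref{3 zero}) this is immediate, so the lemma follows with essentially no further computation.
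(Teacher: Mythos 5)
Your proposal is correct and follows essentially the same route as the paper: the paper simply cites \cite[Corollary 2.4]{CLMS2021nearlyexactJZ} for the fact that $H_m(\Lambda|C,X)$ is computed by the reduced relative bar complex with terms $X\otimes_{C^{\mathsf{e}}}(\Lambda/C)^{\otimes_C m}$, which vanish for $m\geq 3$ by (\ref{3 zero}). The "delicate point" you flag (that the truncated relative bar resolution genuinely computes relative $\Tor$) is exactly what that cited corollary supplies, so no further argument is needed.
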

\begin{proof}
By \cite[Corollary 2.4]{CLMS2021nearlyexactJZ}  we have that $H_{m}(\Lambda|C,X)$ is the homology of the following chain complex
 \begin{equation*}\ \ \cdots  \stackrel{}{\to} X\otimes_{C^\mathsf{e}} (\Lambda/C)^{\otimes_Cm}\stackrel{}{\to}\cdots \stackrel{}{\to} X\otimes_{C^\mathsf{e}}\Lambda/C \stackrel{}{\to} X_C\to 0\end{equation*}
  where $X_C= \Lambda\otimes_{C^\mathsf{e}}C=X/\langle cx-xc\rangle = H_0(C,X).$
  On the other hand, (\ref{3 zero}) ensures that $(\Lambda/C)^{\otimes_C m}=0$ for $m\geq 3$.\qed

\end{proof}

\begin{theo}\label{key}
Let $\Lambda$ be an algebra with a distinguished idempotent $e$ such that $\Lambda e \Lambda$ is a strongly stratifying ideal, and let $f=1-e$.  If $HH_*(\Lambda)$ is finite, then $HH_*(e\Lambda e)$ and $HH_*(f \Lambda f)$ are finite.
\end{theo}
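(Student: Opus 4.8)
The plan is to combine the Jacobi-Zariski long exact sequence of Theorem \ref{JZ esmc*} with the vanishing in high degrees of the relative term, which is recorded in Lemma \ref{0 para m mayor que 2}. First I would recall that we have the diagonal subalgebra $C=e\Lambda e\times f\Lambda f$ and, by Theorem \ref{JZ esmc*}, a long exact sequence
\begin{align*}
 \dots \stackrel{\delta}{\to} H_{m}(C,\Lambda) \stackrel{I}{\to} HH_{m}(\Lambda) \stackrel{K}{\to} H_{m}(\Lambda|C,\Lambda) \stackrel{\delta}{\to}H_{m-1}(C,\Lambda) \stackrel{I}{\to}\dots
\end{align*}
ending at some $n$. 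By Lemma \ref{0 para m mayor que 2} applied to $X=\Lambda$, we have $H_m(\Lambda|C,\Lambda)=0$ for $m\ge 3$. Feeding this into the long exact sequence, the connecting maps $\delta\colon H_m(\Lambda|C,\Lambda)\to H_{m-1}(C,\Lambda)$ are zero for $m\ge 3$, so for $m\ge 3$ the map $I\colon H_m(C,\Lambda)\to HH_m(\Lambda)$ is injective; equivalently $H_m(C,\Lambda)$ embeds into $HH_m(\Lambda)$ for all $m$ large enough. Hence if $HH_*(\Lambda)$ is finite, then $H_*(C,\Lambda)$ is finite as well.

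Next I would pass from $H_*(C,\Lambda)$ to $HH_*(C)$. Here the point is that $\Lambda$, viewed as a $C$-bimodule, decomposes as $\Lambda=C\oplus M\oplus N$ (with $M,N$ having the actions extended by zero), so by additivity $H_*(C,\Lambda)=H_*(C,C)\oplus H_*(C,M)\oplus H_*(C,N)=HH_*(C)\oplus H_*(C,M)\oplus H_*(C,N)$. In particular $HH_*(C)$ is a direct summand of $H_*(C,\Lambda)$, so finiteness of the latter forces finiteness of $HH_*(C)$. Finally, since $C=e\Lambda e\times f\Lambda f$ is a product of two algebras, $HH_*(C)=HH_*(e\Lambda e)\oplus HH_*(f\Lambda f)$, and therefore both $HH_*(e\Lambda e)$ and $HH_*(f\Lambda f)$ are finite, which is the desired conclusion.

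The one subtlety worth spelling out — and the step I expect to require the most care — is bridging the gap between ``$H_m(C,\Lambda)$ injects into $HH_m(\Lambda)$ for $m$ large'' and ``$H_*(C,\Lambda)$ is finite''. The long exact sequence only gives injectivity of $I$ in degrees $m\ge 3$; in the finitely many low degrees finiteness is automatic since everything there is finite-dimensional. So the argument is: choose $N$ with $HH_m(\Lambda)=0$ for $m>N$; then for $m>\max(N,2)$ injectivity of $I$ gives $H_m(C,\Lambda)=0$; hence $H_*(C,\Lambda)$ is finite. The rest is the purely formal bookkeeping with the $C$-bimodule decomposition of $\Lambda$ and the product decomposition of $C$, which presents no real obstacle once one has noted that $H_*(C,\Lambda)$ has $HH_*(C)=HH_*(e\Lambda e)\oplus HH_*(f\Lambda f)$ as a direct summand.
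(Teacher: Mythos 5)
Your argument is correct and follows essentially the same route as the paper: combine the Jacobi-Zariski long exact sequence of Theorem \ref{JZ esmc*} with Lemma \ref{0 para m mayor que 2} to get $H_*(C,\Lambda)=0$ in high degrees, then extract $HH_*(e\Lambda e)\oplus HH_*(f\Lambda f)$ from $H_*(C,\Lambda)$. The only cosmetic difference is at the last step, where the paper computes $H_*(C,\Lambda)=HH_*(e\Lambda e)\oplus HH_*(f\Lambda f)$ exactly via the decomposition (F6), whereas you only exhibit $HH_*(C)$ as a direct summand — which suffices.
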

\begin{proof}
Consider the corresponding strongly stratifying Morita context \\ $\Lambda=\begin{pmatrix}
                                              A  & N \\
                                              M & B
                                            \end{pmatrix}_{\alpha, \beta}$ where $A=e\Lambda e$ and $B=f\Lambda f$ and $C=A\times B$. The  Lemma \ref{0 para m mayor que 2} and the Jacobi-Zariski long exact sequence of Theorem \ref{JZ esmc*} provide $H_*(C, \Lambda)= 0$ for $*>>0$. Moreover
$$\Tor_*^{C^\mathsf{e}}(C, \Lambda)= \Tor_*^{C^\mathsf{e}}(A\oplus B, \ A\oplus N\oplus M\oplus B).$$
By (F\ref{tor Cbimodules}) the latter is $$\Tor^{A^\mathsf{e}}(A,A)\oplus \Tor^{B^\mathsf{e}}(B,B)=HH_*(A)\oplus HH_*(B).$$ We infer that $HH_*(A)$ and $HH_*(B)$ are finite.\qed
\end{proof}

\section{\sf Han's conjecture}\label{HAN}

We recall Han's conjecture \cite{HAN2006} for an algebra $\Lambda$: if $HH_*(\Lambda)$ is finite, then $\Lambda$ has finite global dimension.

\begin{theo}\label{diagonales OK entonces el contexto OK}
Let $\Lambda$ be an algebra with a distinguished idempotent $e$ such that $\Lambda e \Lambda$ is a strongly stratifying ideal,  and let $f=1-e$.
The algebra $\Lambda$ verifies Han's conjecture if and only if $e\Lambda e\times f\Lambda f$ does.
\end{theo}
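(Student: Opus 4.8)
The plan is to deduce the equivalence from the key result Theorem~\ref{key} together with the fact that Han's conjecture is insensitive to passing between $\Lambda$ and $C=e\Lambda e\times f\Lambda f$ at the level of global dimension, modulo the homological control provided by the strongly stratifying hypothesis. Recall that Han's conjecture for an algebra asserts: finite Hochschild homology $\Rightarrow$ finite global dimension. So I must show two implications, and in each of them I may assume the conjecture for one side and must derive it for the other.

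\medskip

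\textbf{The forward direction.} Assume $\Lambda$ verifies Han's conjecture; I want to show $C=e\Lambda e\times f\Lambda f$ does. Suppose $HH_*(C)$ is finite. Since $HH_*(C)=HH_*(e\Lambda e)\oplus HH_*(f\Lambda f)$, both summands are finite. I would like to conclude $HH_*(\Lambda)$ is finite and then invoke Han for $\Lambda$ to get $\gldim\Lambda<\infty$, and finally transfer finiteness of global dimension from $\Lambda$ to $e\Lambda e$ and $f\Lambda f$. The last transfer is exactly where the strongly stratifying hypothesis pays off: by Proposition~\ref{idempotent es implies s} the ideal $\Lambda e\Lambda$ is stratifying, so the Koenig--Nagase/Auslander--Platzeck--Todorov characterization gives $\Ext^*_{\Lambda/\Lambda e\Lambda}(X,Y)\cong\Ext^*_\Lambda(X,Y)$; combined with the standard fact that $e\Lambda e$ has finite global dimension when $\Lambda$ does (since $e\Lambda e$ is a corner ring and $\Lambda e$, $e\Lambda$ are projective on the relevant sides), one gets $\gldim e\Lambda e<\infty$. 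For the $f$-corner one uses symmetrically that condition~(\ref{three}) of Definition~\ref{strongly stratifying ideal} says $\Lambda f\Lambda$ satisfies the first stratifying condition, and together with $M\otimes_A N=0$ one obtains enough to control $\gldim f\Lambda f$. To close the loop I still need ``$HH_*(C)$ finite $\Rightarrow HH_*(\Lambda)$ finite,'' which is the Jacobi--Zariski sequence of Theorem~\ref{JZ esmc*} read in the other direction: in that long exact sequence the terms $H_m(C,\Lambda)=HH_*(C)$ vanish for $*\gg0$ by hypothesis, and $H_m(\Lambda|C,\Lambda)=0$ for $m\ge3$ by Lemma~\ref{0 para m mayor que 2}, so exactness forces $H_m(\Lambda,\Lambda)=HH_m(\Lambda)=0$ for $m\gg0$.

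\medskip

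\textbf{The backward direction.} Assume $C=e\Lambda e\times f\Lambda f$ verifies Han's conjecture; I want $\Lambda$ to verify it. Suppose $HH_*(\Lambda)$ is finite. By Theorem~\ref{key}, $HH_*(e\Lambda e)$ and $HH_*(f\Lambda f)$ are finite, hence $HH_*(C)$ is finite; applying Han's conjecture to $C$ yields $\gldim e\Lambda e<\infty$ and $\gldim f\Lambda f<\infty$. Now I must lift finiteness of global dimension from the two corners back up to $\Lambda$. Here one uses that a strongly stratifying ideal gives a recollement of derived categories $D(\Lambda/\Lambda e\Lambda)$, $D(\Lambda)$, $D(e\Lambda e)$ (noted in the excerpt right after Definition~\ref{idempotent s}); by Remark~\ref{Lambda modulo the ideal through e} the quotient $\Lambda/\Lambda e\Lambda\cong B/\Im\beta$, and in the strongly stratifying case $\Im\beta=0$ (Remark~\ref{esmc is smc}), so $\Lambda/\Lambda e\Lambda\cong B=f\Lambda f$. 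Thus the recollement is relative to $D(f\Lambda f)$ and $D(e\Lambda e)$, both of finite global dimension; by the standard result that in a recollement finiteness of global dimension of the two outer terms, together with finite $\pdim$ of the relevant exact functors (guaranteed for stratifying ideals, cf.\ Angeleri~H\"ugel--Koenig--Liu--Yang~\cite{ANGELERI KOENIG LIU YANG 2017 472}), implies finiteness of global dimension of the middle term, we get $\gldim\Lambda<\infty$. This is exactly the behaviour of global dimension under stratifying ideals; I will cite it rather than reprove it.

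\medskip

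\textbf{The main obstacle} I expect is making the global-dimension transfers completely rigorous in both directions: upward along the recollement (needing the precise projective-dimension bounds on the functors, which is where one really must use that the ideal is \emph{strongly} stratifying and not merely stratifying, so that $\Lambda/\Lambda e\Lambda$ is genuinely $f\Lambda f$ and the ``$f$-side'' of the recollement is an honest corner algebra) and downward to the two corners (needing that both $e\Lambda e$ and $f\Lambda f$ inherit finite global dimension, the second of which is less standard and leans on conditions~(\ref{two}) and~(\ref{three}) of Definition~\ref{strongly stratifying ideal}). Everything on the Hochschild-homology side is already packaged in Theorems~\ref{key} and~\ref{JZ esmc*} and Lemma~\ref{0 para m mayor que 2}; the work is purely on the global-dimension side, and I would organize the write-up so that the homological-finiteness implications are dispatched in two lines each and the bulk of the argument addresses the recollement-theoretic control of $\gldim$.
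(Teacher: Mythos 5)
Your proposal follows essentially the same route as the paper: Theorem~\ref{key} (resp.\ a long exact sequence) handles the Hochschild side, the identification $\Lambda/\Lambda e\Lambda=f\Lambda f$ (via $\beta=0$) identifies the outer terms of the recollement, and the Angeleri H\"ugel--Koenig--Liu--Yang/Han result transfers finiteness of global dimension between $\Lambda$ and the two corners. Two remarks. First, where the paper deduces ``$HH_*(C)$ finite $\Rightarrow HH_*(\Lambda)$ finite'' from the Keller--Han long exact sequence attached to the recollement, you instead read the Jacobi--Zariski sequence of Theorem~\ref{JZ esmc*} backwards; this works, but your assertion $H_m(C,\Lambda)=HH_m(C)$ is not automatic -- it is exactly the computation $\Tor_*^{C^{\mathsf e}}(C,\Lambda)=HH_*(A)\oplus HH_*(B)$ carried out in the proof of Theorem~\ref{key} using (F6) and the vanishing of the cross terms, so you should cite or reproduce that step. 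Second, and more seriously, your ``standard fact that $e\Lambda e$ has finite global dimension when $\Lambda$ does (since $e\Lambda e$ is a corner ring and $\Lambda e$, $e\Lambda$ are projective on the relevant sides)'' is false in general: a corner of an algebra of finite global dimension can have infinite global dimension (this happens already for quasi-hereditary algebras). The correct justification is the one the paper uses, namely that the recollement arising from the stratifying ideal makes the AKLY/Han statement an equivalence, so finiteness of $\gldim\Lambda$ descends to both $e\Lambda e$ and $\Lambda/\Lambda e\Lambda=f\Lambda f$; your appeal to the Koenig--Nagase $\Ext$-isomorphism does handle the $f$-corner, but the $e$-corner genuinely needs the recollement result rather than a corner-ring argument.
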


\begin{proof}
Consider the corresponding strongly stratifying Morita context $$\Lambda = \begin{pmatrix}
                                              A  & N \\
                                              M & B
                                            \end{pmatrix}_{\alpha, \beta},$$ where $A=e\Lambda e$ and $B=f\Lambda f$.
Assume that $A\times B$ satisfies Han's conjecture and let us prove that this is also the case for $\Lambda$. So suppose $HH_*(\Lambda)$ is finite.  By  Theorem \ref{key},  we have that $HH_*(A)$ and $HH_*(B)$ are finite. It is well known that $HH_*(A\times B) = HH_*(A)\oplus HH_*(B)$, then $H_*(A\times B)$ is finite and thus $A\times B$ has finite global dimension. Hence $A$ and $B$ have finite global dimension.

Note that by Remark \ref{esmc is smc} we have $\beta:0\to B$, then $\Im\beta=0$ and thus $B/\Im\beta=B$. Therefore $\Lambda/ \Lambda e \Lambda =B$, see Remark \ref{Lambda modulo the ideal through e}.    Since the  ideal $\Lambda e \Lambda$ is strongly stratifying, it is stratifying. Hence there is a recollement of $D(\Lambda)$ relative to $D(\Lambda/\Lambda e \Lambda)$ and $D(e\Lambda e)$, that is relative to $D(B)$ and $D(A)$.

According to L. Angeleri H\"{u}gel, S. Koenig, Q. Liu and D. Yang in \cite[Theorem I, p. 17]{ANGELERI KOENIG LIU YANG 2017 472} (see also \cite[Proposition 4, p.541]{HAN2014}), since $A$ and $B$ have finite global dimension, $\Lambda$ has finite global dimension.

Next we show the other implication. Assume that $\Lambda$ satisfies Han's conjecture, our aim is to show that $A\times B$ also does. Let's suppose that  $HH_*(A\times B)$ is finite. Since $HH_*(A\times B) = HH_*(A)\oplus HH_*(B)$ we infer that $HH_*(A)$ and $HH_*(B)$ are finite.

 We have that $\Lambda e\Lambda$ is a strongly stratifying ideal, hence it is stratifying and there is a recollement. According to \cite[Corollary 2, p. 543]{HAN2014} after B. Keller \cite{KELLER1998}, there is a long exact sequence in Hochschild homology
\begin{equation*}
  \cdots\to HH_{n+1}(\Lambda/\Lambda e \Lambda) \to HH_n(e\Lambda e)\to HH_n(\Lambda) \to HH_n(\Lambda/\Lambda e \Lambda) \to \cdots
\end{equation*}
that is for the Morita context
\begin{equation*}
  \cdots\to HH_{n+1}(B) \to HH_n(A)\to HH_n(\Lambda) \to HH_n(B) \to \cdots .
\end{equation*}
We infer that $HH_*(\Lambda)$ is finite. Since $\Lambda$ verifies Han's conjecture,  $\Lambda$ is of finite global dimension.

Using again the above cited result in \cite{ANGELERI KOENIG LIU YANG 2017 472,HAN2014} we infer that $A$ and $B$, and thus $A\times B$, have finite global dimension. \qed
\end{proof}
Theorem \ref{diagonales OK entonces el contexto OK} will also be useful for considering algebras filtered by ideals which successive quotients provide strongly stratifying ideals. The following result shows that  Definition \ref{chain strongly stratifying} below makes sense.

\begin{lemm}\label{equality}
Let $\Lambda$ be an algebra and let $u,v\in \Lambda$ be orthogonal idempotents. We have
$$\frac{\Lambda(u+v)\Lambda}{\Lambda u \Lambda} = \frac{\Lambda}{\Lambda u \Lambda}\overline{v}\frac{\Lambda}{\Lambda u \Lambda}$$ where  $\overline{v}$ is the class of $v$ in ${\Lambda}/{\Lambda u \Lambda}$.

\end{lemm}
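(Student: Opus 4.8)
The plan is to prove the claimed equality of ideals in $\Lambda/\Lambda u \Lambda$ by a direct computation, passing to the quotient algebra and reducing both sides to the same subset. Write $\Gamma = \Lambda/\Lambda u \Lambda$ and let $\pi : \Lambda \to \Gamma$ denote the canonical surjection, so that $\bar v = \pi(v)$. The left-hand side $\frac{\Lambda(u+v)\Lambda}{\Lambda u \Lambda}$ is, by the correspondence theorem for ideals, the image $\pi\big(\Lambda(u+v)\Lambda\big)$, since $\Lambda u \Lambda \subseteq \Lambda(u+v)\Lambda$ (indeed $u = (u+v)u \in \Lambda(u+v)\Lambda$ because $vu=0$, using orthogonality). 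The right-hand side is by definition $\Gamma \bar v \Gamma = \pi(\Lambda)\pi(v)\pi(\Lambda) = \pi(\Lambda v \Lambda)$.

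So everything reduces to the set equality $\pi\big(\Lambda(u+v)\Lambda\big) = \pi\big(\Lambda v \Lambda\big)$ inside $\Gamma$. First I would establish $\supseteq$: since $v = (u+v) - u$ and $\pi(u) = 0$ (as $u \in \Lambda u \Lambda$), we get $\pi(v) = \pi(u+v)$, hence for any $a,b \in \Lambda$, $\pi(avb) = \pi(a(u+v)b) \in \pi(\Lambda(u+v)\Lambda)$; this gives $\pi(\Lambda v \Lambda) \subseteq \pi(\Lambda(u+v)\Lambda)$. For $\subseteq$ the same identity $\pi(u+v) = \pi(v)$ applies symmetrically: $\pi(a(u+v)b) = \pi(avb) \in \pi(\Lambda v \Lambda)$. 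Thus the two images coincide, and combined with the identifications of the two sides as these images, the lemma follows.

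There is essentially no obstacle here: the only point requiring care is the bookkeeping with the correspondence theorem, namely checking that $\Lambda u \Lambda \subseteq \Lambda(u+v)\Lambda$ so that the quotient $\frac{\Lambda(u+v)\Lambda}{\Lambda u \Lambda}$ genuinely makes sense as a sub-object of $\Gamma$, and that it equals $\pi(\Lambda(u+v)\Lambda)$. The containment uses only $vu = 0 = uv$: from $u = u\cdot u = u(u+v) \cdot 1 \cdot$ — more precisely $u = 1\cdot u = 1\cdot(u+v)\cdot u$ since $(u+v)u = u^2 + vu = u$ — one sees $u \in \Lambda(u+v)\Lambda$, whence $\Lambda u\Lambda \subseteq \Lambda(u+v)\Lambda$. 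Everything else is the elementary observation that modding out by $\Lambda u\Lambda$ kills the difference between $u+v$ and $v$. I would present this in three or four lines.
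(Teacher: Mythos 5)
Your proof is correct and is essentially the paper's argument in a slightly different dress: the paper first writes $\Lambda(u+v)\Lambda=\Lambda u\Lambda+\Lambda v\Lambda$ and applies the isomorphism theorems, identifying $\frac{\Lambda v\Lambda}{\Lambda u\Lambda\cap\Lambda v\Lambda}$ with the image of $\Lambda v\Lambda$ in $\Lambda/\Lambda u\Lambda$, whereas you compute the images $\pi(\Lambda(u+v)\Lambda)=\pi(\Lambda v\Lambda)$ directly from $\pi(u+v)=\pi(v)$. Both hinge on the same use of orthogonality, namely $(u+v)u=u$ giving $\Lambda u\Lambda\subseteq\Lambda(u+v)\Lambda$, so there is nothing to add.
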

\begin{proof}
First note that since $u$ and $v$ are orthogonal idempotents we have $$\Lambda(u+v)\Lambda = \Lambda u \Lambda + \Lambda v \Lambda,$$ consequently
$$\frac
{\Lambda(u+v)\Lambda}{\Lambda u \Lambda}=\frac{\Lambda u \Lambda + \Lambda v \Lambda} {\Lambda u \Lambda}=
 \frac{\Lambda v\Lambda}{\Lambda u\Lambda \cap \Lambda v\Lambda}\cdot$$
Next  consider the composition $\Lambda v \Lambda \hookrightarrow \Lambda \twoheadrightarrow
{\Lambda}/{\Lambda u \Lambda}.$ Its image is  $\frac
{\Lambda}{\Lambda u \Lambda}\overline{v}\frac
{\Lambda}{\Lambda u \Lambda}$ and its kernel is $\Lambda u\Lambda \cap \Lambda v\Lambda$.\qed
\end{proof}

\begin{defi}\label{chain strongly stratifying}
Let $\Lambda$ be an algebra. A \textit{strongly stratifying $n$-chain} is an ordered complete system of orthogonal idempotents $\{e_1,\dots,e_n\}$ of $\Lambda$ such that the filtration by ideals
$$0\subset \Lambda e_1\Lambda \subset \Lambda (e_1+e_2)\Lambda \subset \cdots \subset \Lambda (e_1+e_2+\dots + e_{n-1})\Lambda \subset \Lambda$$
verifies that for  $1\leq i\leq n$ the quotient $ {\Lambda (e_1+\dots + e_{i})\Lambda}/{\Lambda (e_1+\dots + e_{i-1})\Lambda} $
is a strongly stratifying ideal of ${\Lambda}/{\Lambda (e_1+\dots + e_{i-1})\Lambda}.$
\end{defi}
\begin{rema}\
\begin{itemize}
 \item The bound quiver algebra $\Lambda$ of Example \ref{LIU}
\footnotesize
\[\xymatrix{&&e_2\ar@<.7ex>[dd]^{c}\ar@<-.7ex>[dd]_{b}\\
e_1\ar[rru]^{a}&&&\\
&& e_3\ar[llu]^{d}}\hspace{10pt}\xymatrix{\\ ba=0,~ ad=0,~dc=0.}\]
\normalsize admits a strongly stratifying $2$-chain $\{e_2 +e_3, e_1\}.$
 \item For $1\leq i\leq n$, let $\Lambda_i={\Lambda}/{\Lambda (e_1+\dots + e_{i})\Lambda}.$ According to Lemma \ref{equality}
$$\frac {{\Lambda (e_1+\dots + e_{i})\Lambda}}{{\Lambda (e_1+\dots + e_{i-1})\Lambda}} = \Lambda_{i-1} \overline{e_i} \Lambda_{i-1}$$
where $\overline{e_i}$ denotes the class of $e_i$ in $\Lambda_{i-1}.$
\end{itemize}
\end{rema}

\begin{defi}
Let $\C$ be a class of algebras. A \textit{$\C$-strongly stratifying $n$-chain} of an algebra $\Lambda$ is a strongly stratifying $n$-chain $\{e_1,\dots,e_n\}$ of $\Lambda$ such that for $1\leq i\leq n$ the algebra $e_i\Lambda e_i$ belongs to $\C$.
\end{defi}

We will need the following lemma.

\begin{lemm}\label{preparation for Han for chains}
  Let $\C$ be a class of algebras which is closed by taking quotients.   Let $\Lambda$ be an algebra admitting a $\C$-strongly stratifying $n$-chain $\{e_1,\dots,e_n\}$ for $n>1$. The algebra $\Lambda/\Lambda e_1 \Lambda$ admits a  $\C$-strongly stratifying $n-1$-chain $\{\overline{e_2},\overline{e_3},\dots,\overline{e_n}\}.$

\end{lemm}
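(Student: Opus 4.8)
The plan is to unwind the definitions and use Lemma \ref{equality} repeatedly. Write $\Lambda' = \Lambda/\Lambda e_1 \Lambda$ and $\overline{e_i}$ for the class of $e_i$ in $\Lambda'$. Since $\{e_1,\dots,e_n\}$ is a complete system of orthogonal idempotents of $\Lambda$, the images $\{\overline{e_2},\dots,\overline{e_n}\}$ form a complete system of orthogonal idempotents of $\Lambda'$ (completeness because $\overline{e_1} = 0$ in $\Lambda'$, as $e_1 \in \Lambda e_1 \Lambda$). So the candidate chain is at least a legitimate ordered complete system of orthogonal idempotents.

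Next I would identify the filtration of $\Lambda'$ induced by $\{\overline{e_2},\dots,\overline{e_n}\}$ with the tail of the filtration of $\Lambda$. For $2 \le i \le n$, set $g_i = e_1 + e_2 + \dots + e_i$ in $\Lambda$. The $(i-1)$-st step of the $\Lambda'$-filtration is $\Lambda' (\overline{e_2} + \dots + \overline{e_i}) \Lambda' = \Lambda' \overline{g_i} \Lambda'$, which by Lemma \ref{equality} (applied with $u = e_1$, $v = e_2 + \dots + e_i$, noting these are orthogonal idempotents of $\Lambda$) equals $\Lambda g_i \Lambda / \Lambda e_1 \Lambda$. So the $\Lambda'$-filtration is exactly
$$0 \subset \frac{\Lambda g_2 \Lambda}{\Lambda e_1 \Lambda} \subset \frac{\Lambda g_3 \Lambda}{\Lambda e_1 \Lambda} \subset \cdots \subset \frac{\Lambda g_{n-1}\Lambda}{\Lambda e_1 \Lambda} \subset \Lambda',$$
which is the image under $\Lambda \twoheadrightarrow \Lambda'$ of the tail $\Lambda e_1 \Lambda \subset \Lambda g_2 \Lambda \subset \cdots \subset \Lambda$ of the original filtration. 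The successive quotients are therefore literally unchanged: for $2 \le i \le n$, the quotient of consecutive steps of the $\Lambda'$-filtration is $\big(\Lambda g_i \Lambda / \Lambda e_1 \Lambda\big) \big/ \big(\Lambda g_{i-1}\Lambda / \Lambda e_1 \Lambda\big) \cong \Lambda g_i \Lambda / \Lambda g_{i-1}\Lambda$, and similarly the ambient algebra at step $i$ is $\Lambda' / (\Lambda g_{i-1}\Lambda/\Lambda e_1 \Lambda) \cong \Lambda / \Lambda g_{i-1}\Lambda = \Lambda_{i-1}$, exactly as in the original chain. Hence each such quotient is a strongly stratifying ideal of $\Lambda_{i-1}$ by the hypothesis that $\{e_1,\dots,e_n\}$ is a strongly stratifying $n$-chain for $\Lambda$; this shows $\{\overline{e_2},\dots,\overline{e_n}\}$ is a strongly stratifying $(n-1)$-chain of $\Lambda'$.

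It remains to check the $\C$-condition: for $2 \le i \le n$ we need $\overline{e_i}\,\Lambda'\,\overline{e_i} \in \C$. The quotient map $\Lambda \to \Lambda'$ restricts to a surjective algebra map $e_i \Lambda e_i \to \overline{e_i} \Lambda' \overline{e_i}$ (its image is $\overline{e_i \Lambda e_i}$, and $e_i (\Lambda e_1 \Lambda) e_i = \overline{e_i}\,\overline{\Lambda e_1 \Lambda}\,\overline{e_i}$ is precisely the part of $\Lambda e_1 \Lambda$ killed, so the image is all of $\overline{e_i}\Lambda'\overline{e_i}$). Since $e_i \Lambda e_i \in \C$ and $\C$ is closed under quotients, $\overline{e_i}\Lambda'\overline{e_i} \in \C$. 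The main (and only slightly delicate) point is the bookkeeping with idempotent ideals and the corner algebras under the quotient map — specifically verifying that $\overline{e_i}\,\Lambda'\,\overline{e_i}$ really is a quotient of $e_i \Lambda e_i$ rather than merely a subquotient; everything else is a direct application of Lemma \ref{equality} and standard isomorphism theorems.
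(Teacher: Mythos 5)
Your proposal is correct and follows essentially the same route as the paper: Lemma \ref{equality} to identify the induced filtration of $\Lambda/\Lambda e_1\Lambda$ with the image of the tail of the original filtration, the third isomorphism theorem to see that the successive quotients and ambient algebras are unchanged, and the identification of $\overline{e_i}(\Lambda/\Lambda e_1\Lambda)\overline{e_i}$ as the quotient $e_i\Lambda e_i/(\Lambda e_1\Lambda\cap e_i\Lambda e_i)$ to conclude via closure of $\C$ under quotients. No gaps.
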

\begin{proof} We have the following quotient filtration of $\Lambda/\Lambda e_1 \Lambda$
$$0\subset  \frac{\Lambda (e_1+e_2)\Lambda}{\Lambda e_1 \Lambda} \subset \cdots \subset \frac{\Lambda (e_1+e_2+\dots + e_{n-1})\Lambda }{\Lambda e_1 \Lambda} \subset \frac{\Lambda}{\Lambda e_1 \Lambda}\cdot$$
Using Lemma \ref{equality},  the ideals of this filtration are as follows
$$ \frac{\Lambda(e_1+\dots +e_i)\Lambda}{\Lambda e_1 \Lambda}= \frac{\Lambda(e_2+\dots +e_i)\Lambda}{\Lambda e_1 \Lambda \cap \Lambda(e_2+\dots +e_i)\Lambda}= \frac{\Lambda}{\Lambda e_1 \Lambda} (\overline{e_2}+\dots + \overline{e_i}) \frac{\Lambda}{\Lambda e_1 \Lambda}\cdot$$
Hence the quotient filtration is indeed the one corresponding to the complete system of orthogonal idempotents $\{\overline{e_2},\dots ,\overline{e_i}\}$ of $\Lambda/\Lambda e_1 \Lambda$, namely
$$0\subset  \frac{\Lambda}{\Lambda e_1 \Lambda}\overline{e_2} \frac{\Lambda}{\Lambda e_1 \Lambda}\subset \cdots \subset \frac{\Lambda}{\Lambda e_1 \Lambda} (\overline{e_2}+\dots + \overline{e_n}) \frac{\Lambda}{\Lambda e_1 \Lambda}\subset \frac{\Lambda}{\Lambda e_1 \Lambda}\cdot$$
To verify that the successive quotients of this filtration of $\Lambda/\Lambda e_1 \Lambda$ are strongly stratifying in the corresponding quotient of $\Lambda/\Lambda e_1 \Lambda$, note that
$$\frac {{\Lambda (e_1+\dots + e_{i})\Lambda}/\Lambda e_1 \Lambda}{{\Lambda (e_1+\dots + e_{i-1})\Lambda}/\Lambda e_1 \Lambda}=\frac {{\Lambda (e_1+\dots + e_{i})\Lambda}}{{\Lambda (e_1+\dots + e_{i-1})\Lambda}}$$
and
$$\frac{{\Lambda/\Lambda e_1 \Lambda}}{{\Lambda (e_1+\dots + e_{i-1})\Lambda}/\Lambda e_1 \Lambda}=\frac{{\Lambda}}{{\Lambda (e_1+\dots + e_{i-1})\Lambda}}\cdot$$

Finally observe that
$$\overline{e_i}\frac{\Lambda}{\Lambda e_1\Lambda}\overline{e_i}=\frac{e_i\Lambda e_i}{\Lambda e_1 \Lambda \cap e_i\Lambda e_i}.$$
Since $e_i\Lambda e_i$ is in $\C$ which is closed by taking quotients, we infer that $\overline{e_i}\frac{\Lambda}{\Lambda e_1\Lambda}\overline{e_i}$ also belongs to $\C$. This way $\{\overline{e_2},\dots ,\overline{e_n}\}$ is a  $\C$-strongly stratifying $n-1$-chain of $\Lambda/\Lambda e_1 \Lambda$.
 \qed
\end{proof}
\begin{theo}\label{Han for chains}
 Let $\C$ be a class of algebras which is closed by taking quotients, and assume that Han's conjecture holds for all algebras in $\C$.   Let $\Lambda$ be an algebra which admits a $\C$-strongly stratifying $n$-chain for some $n>0$. Then $\Lambda$ verifies Han's conjecture.
\end{theo}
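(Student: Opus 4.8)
The plan is to argue by induction on the length $n$ of the $\C$-strongly stratifying chain, using Theorem \ref{key} as the engine and Lemma \ref{preparation for Han for chains} to shorten the chain. Assume $HH_*(\Lambda)$ is finite; we must show $\Lambda$ has finite global dimension.

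For the base case $n=1$, the chain consists of a single idempotent $e_1$ which must equal $1$ (it is a complete system), so the only content is that $\Lambda = e_1\Lambda e_1 \in \C$, and Han's conjecture for $\C$ gives the conclusion directly.

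For the inductive step, suppose $n>1$ and the result holds for chains of length $n-1$. Write $e=e_1$ and $f=1-e_1=e_2+\dots+e_n$, so that $\Lambda e \Lambda$ is by hypothesis a strongly stratifying ideal of $\Lambda$. First I would apply Theorem \ref{key}: since $HH_*(\Lambda)$ is finite, so are $HH_*(e\Lambda e)$ and $HH_*(f\Lambda f)$. Now $e\Lambda e = e_1\Lambda e_1 \in \C$, so Han's conjecture for $\C$ forces $\gldim(e\Lambda e)<\infty$. Next, by Lemma \ref{preparation for Han for chains} the quotient algebra $\Lambda/\Lambda e_1\Lambda$ admits a $\C$-strongly stratifying $(n-1)$-chain $\{\overline{e_2},\dots,\overline{e_n}\}$; moreover, as recorded in Remark \ref{esmc is smc} and used in the proof of Theorem \ref{diagonales OK entonces el contexto OK}, for a strongly stratifying ideal one has $\Im\beta=0$, hence $\Lambda/\Lambda e_1\Lambda = f\Lambda f = B$ (via Remark \ref{Lambda modulo the ideal through e} and Proposition \ref{es iff es}). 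Therefore $HH_*(\Lambda/\Lambda e_1\Lambda)=HH_*(f\Lambda f)$ is finite, and the induction hypothesis applied to the shorter chain on $\Lambda/\Lambda e_1\Lambda$ yields $\gldim(\Lambda/\Lambda e_1\Lambda)<\infty$, i.e. $\gldim(B)<\infty$.

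To conclude, invoke that $\Lambda e \Lambda$ strongly stratifying implies stratifying (Proposition \ref{idempotent es implies s}), so $D(\Lambda)$ admits a recollement relative to $D(\Lambda/\Lambda e\Lambda)=D(B)$ and $D(e\Lambda e)=D(A)$. Since both $A$ and $B$ have finite global dimension, the result of Angeleri H\"ugel--Koenig--Liu--Yang \cite[Theorem I]{ANGELERI KOENIG LIU YANG 2017 472} (see also \cite[Proposition 4]{HAN2014}) gives $\gldim(\Lambda)<\infty$, which is exactly Han's conjecture for $\Lambda$. The only subtle point — and the step I expect to need the most care — is making sure the hypotheses of Theorem \ref{key} genuinely apply at each stage, i.e. that $\Lambda e_1\Lambda$ really is a strongly stratifying ideal of $\Lambda$ (immediate from the definition of the chain with $i=1$, since the quotient by the zero ideal is $\Lambda$ itself), and that the identification $\Lambda/\Lambda e_1\Lambda \cong f\Lambda f$ is legitimate; both follow cleanly from the earlier machinery, so the argument is essentially a clean induction with no genuinely new obstacle beyond bookkeeping.
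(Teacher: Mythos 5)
Your proof is correct and follows essentially the same route as the paper: induction on the chain length, Lemma \ref{preparation for Han for chains} to pass the chain to $\Lambda/\Lambda e_1\Lambda$, the identification $\Lambda/\Lambda e_1\Lambda=(1-e_1)\Lambda(1-e_1)$ via $\beta=0$, and the combination of Theorem \ref{key} with the recollement result of Angeleri H\"ugel--Koenig--Liu--Yang. The only cosmetic difference is that you unroll the relevant direction of Theorem \ref{diagonales OK entonces el contexto OK} inline, whereas the paper invokes that theorem as a black box and then verifies Han's conjecture for the product $e_1\Lambda e_1\times(1-e_1)\Lambda(1-e_1)$.
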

\begin{proof} By induction, let $\Lambda$ be an algebra admitting a $\C$-strongly stratifying $n$-chain $\{e_1,\dots, e_n\}.$    If $n=1$, then $e_1=1$ and the algebra $\Lambda = 1\Lambda 1$ is in $\C$. By hypothesis, $\Lambda$ verifies Han's conjecture.

Let $n>1$ and consider the algebra $\Lambda/\Lambda e_1 \Lambda$ which admits a $\C$-strongly stratifying $n-1$-chain by  Lemma \ref{preparation for Han for chains}. Hence Han's conjecture holds for it.

The ideal $\Lambda e_1 \Lambda$ is strongly stratifying in $\Lambda$. By Theorem \ref{diagonales OK entonces el contexto OK}, Han's conjecture holds for $\Lambda$ if and only if it holds  for $e_1\Lambda e_1\times (1-e_1)\Lambda (1-e_1)$. To verify the latter, suppose that $HH_*(e_1\Lambda e_1\times (1-e_1)\Lambda (1-e_1))$ is finite.
  So $HH_*(e_1\Lambda e_1)$ is finite. But $e_1\Lambda e_1$  belongs to $\C$, thus by hypothesis it verifies Han's conjecture. Then $e_1\Lambda e_1$ has finite global dimension.

On the other hand we also have that $HH_*((1-e_1)\Lambda (1-e_1))$ is finite. Consider the Morita context given by $e_1$. By Remark \ref{Lambda modulo the ideal through e} $$ \frac{\Lambda}{\Lambda e_1 \Lambda} = \frac{(1-e_1)\Lambda (1-e_1)}{\Im\beta}\cdot$$
Since $\Lambda e_1 \Lambda$ is strongly stratifying, we have that $\beta=0$. Consequently
$$ \Lambda/\Lambda e_1 \Lambda = (1-e_1)\Lambda (1-e_1)$$
and $HH_*(\Lambda/\Lambda e_1 \Lambda)$ is finite. Han's conjecture holds for $\Lambda/\Lambda e_1 \Lambda$ by the inductive hypothesis - see above.  Then $\Lambda/\Lambda e_1 \Lambda=(1-e_1)\Lambda (1-e_1)$ has  finite global dimension.

We have established before that $e_1\Lambda e_1$ has finite global dimension. We infer that $e_1\Lambda e_1\times (1-e_1)\Lambda (1-e_1)$ has finite global dimension, that is this algebra verifies Han's conjecture as needed.  \qed
\end{proof}

\begin{coro}
Assume that Han's conjecture holds for local algebras. If an algebra $\Lambda$ admits a strongly stratifying chain $\{e_1,\dots, e_n\}$ with  $e_i$ primitive for all $i$, then Han's conjecture is true for $\Lambda$.
\end{coro}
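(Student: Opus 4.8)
The plan is to deduce this corollary directly from Theorem \ref{Han for chains} by making a judicious choice of the class $\C$. First I would take $\C$ to be the class consisting of all (finite dimensional) local $k$-algebras together with the zero algebra. The hypothesis of the corollary says exactly that Han's conjecture holds for every algebra in $\C$: for local algebras this is the assumption, and for the zero algebra it is trivial, since $HH_*(0)=0$ and the zero algebra has finite global dimension.

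The one point that genuinely needs checking is that this class $\C$ is closed under taking quotients, which is what Lemma \ref{preparation for Han for chains} and Theorem \ref{Han for chains} require. If $A$ is local, with Jacobson radical $\rad A$, then for any two-sided ideal $I$ of $A$ either $I=A$, in which case $A/I=0\in\C$, or $I\subsetneq I+\rad A$ is contained in $\rad A$ and $A/I$ has the image of $\rad A$ as its unique maximal ideal, hence $A/I$ is again local. So $\C$ is closed under quotients.

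Next I would verify that a strongly stratifying $n$-chain $\{e_1,\dots,e_n\}$ with every $e_i$ primitive is automatically a $\C$-strongly stratifying $n$-chain. Indeed, a primitive idempotent $e_i$ of $\Lambda$ is, by definition, an idempotent with no nontrivial idempotent summand, so the corner algebra $e_i\Lambda e_i$ is a finite dimensional algebra without nontrivial idempotents, i.e. a local algebra; thus $e_i\Lambda e_i\in\C$ for each $i$. Applying Theorem \ref{Han for chains} with this class $\C$ then immediately gives that $\Lambda$ verifies Han's conjecture, which is the assertion.

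The only place where a little care is needed — and the step I would flag as the (rather minor) obstacle — is the bookkeeping inside the induction already carried out in the proof of Theorem \ref{Han for chains} via Lemma \ref{preparation for Han for chains}: passing from $\Lambda$ to $\Lambda/\Lambda e_1\Lambda$ replaces $e_i\Lambda e_i$ by $\overline{e_i}\,(\Lambda/\Lambda e_1\Lambda)\,\overline{e_i}=e_i\Lambda e_i/(\Lambda e_1\Lambda\cap e_i\Lambda e_i)$, and $\overline{e_i}$ may become zero, so this corner algebra can degenerate to the zero algebra. Admitting the zero algebra into $\C$ (as above) is precisely what keeps $\C$ closed under quotients and lets the induction run unchanged; no computation beyond what is already in Theorem \ref{Han for chains} is required.
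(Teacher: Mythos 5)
Your proposal is correct and is exactly the intended argument: the corollary is an immediate application of Theorem \ref{Han for chains} with $\C$ the class of local algebras (padded with the zero algebra), using that this class is closed under quotients and that $e_i\Lambda e_i$ is local when $e_i$ is primitive. The paper gives no separate proof, and your extra care about corner algebras possibly degenerating to zero in the induction is a sensible, harmless refinement.
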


In order to avoid classes of algebras closed by taking quotients, instead of filtering by ideals of an algebra $\Lambda$,  below we filter $\la$ by algebras $f\Lambda f$, where $f$ is a partial sum of a complete system of orthogonal idempotents.

The following lemma can be easily proved.
\begin{lemm}\label{trivial}
Let $\Lambda$ be an algebra with an ordered complete set of orthogonal idempotents $\{e_1,\dots, e_n\}$. Consider the following idempotents
$$f_0=e_{1}+\dots +e_n,\ f_1= e_{2}+\dots +e_n,\ \dots\ ,f_i= e_{i+1}+\dots +e_n, \ \dots ,\ f_{n-1}= e_n.$$

For  $0\leq i \leq n-1$,  consider the algebra $f_i\Lambda f_i$ with unit $f_i$.
\begin{itemize}
  \item For $j> i$ we have $e_j=f_ie_j=e_jf_i=f_ie_jf_i$, therefore $e_j\in f_i\Lambda f_i$ and $e_j(f_i\Lambda f_i )e_j = e_j\Lambda e_j$,
 \item $f_i\Lambda f_i$ has a complete set of orthogonal idempotents $\{e_{i+1},\dots, e_n\}$ and $f_{i+1}=f_i - e_{i+1}$,
   \item For $j\geq i$ we have $f_i f_{j}= f_{j}f_{i}= f_{j}$, therefore $f_{j}\la f_{j}=f_{j}(f_i\Lambda f_i )f_{j}$,
   \item
$0\ \subset \ f_{n-1}\la f_{n-1}\ \subset\ \dots\ \subset\  f_{i}\la f_{i}\ \subset\ \dots \ \subset\ f_{1}\la f_{1} \ \subset f_{0}\la f_{0}=\Lambda.$
\end{itemize}
\end{lemm}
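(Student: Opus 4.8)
The plan is to verify each bullet of the statement by a direct computation with the orthogonality relations $e_je_k=0$ for $j\ne k$ and $e_j^2=e_j$. First I would record that every $f_i$ is an idempotent, so that $f_i\Lambda f_i$ is indeed an algebra with unit $f_i$: expanding $f_i^2=\bigl(\sum_{j>i}e_j\bigr)^2$, orthogonality kills all cross terms and leaves $\sum_{j>i}e_j=f_i$. The same expansion shows that for $j>i$ one has $f_ie_j=\sum_{k>i}e_ke_j=e_j$, because only the summand with $k=j$ survives, and symmetrically $e_jf_i=e_j$; hence $e_j=f_ie_jf_i$, which places $e_j$ inside $f_i\Lambda f_i$. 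From $e_jf_i=f_ie_j=e_j$ I would then deduce $e_j(f_i\Lambda f_i)e_j=e_jf_i\Lambda f_ie_j=e_j\Lambda e_j$, completing the first item.

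For the second item I would observe that $e_{i+1},\dots,e_n$ are pairwise orthogonal idempotents by hypothesis, lie in $f_i\Lambda f_i$ by the previous step, and sum to $f_i$, the unit of $f_i\Lambda f_i$; this is exactly what it means for them to form a complete orthogonal system in that algebra. The identity $f_{i+1}=f_i-e_{i+1}$ is read off directly from $f_i=e_{i+1}+\cdots+e_n$ and $f_{i+1}=e_{i+2}+\cdots+e_n$.

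For the third item, when $j\ge i$ the product $f_if_j=\sum_{k>i}\sum_{\ell>j}e_ke_\ell$ collapses by orthogonality to $\sum_{\ell>j}e_\ell=f_j$, since every index $\ell>j$ also satisfies $\ell>i$; symmetrically $f_jf_i=f_j$. Consequently $f_j(f_i\Lambda f_i)f_j=f_jf_i\Lambda f_if_j=f_j\Lambda f_j$, so $f_j\Lambda f_j\subseteq f_i\Lambda f_i$ whenever $j\ge i$. Specialising to $j=i+1$ yields the successive inclusions of the displayed chain; at the bottom $f_{n-1}\Lambda f_{n-1}=e_n\Lambda e_n$, and at the top $f_0=e_1+\cdots+e_n=1$ because the system is complete, so $f_0\Lambda f_0=\Lambda$.

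I do not expect a genuine obstacle here: every assertion is a one-line manipulation of the orthogonality relations. The only point requiring a little care is the bookkeeping of index ranges — making sure that the hypothesis ``$j\ge i$'' (rather than ``$j>i$'') is precisely what forces $f_if_j=f_j$ — together with keeping in mind that the unit of the non-unital subalgebra $f_i\Lambda f_i$ is $f_i$ and not $1$, so that the chain of inclusions is stated with the correct ambient identities.
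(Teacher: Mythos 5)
Your proof is correct; the paper omits the argument entirely (stating only that the lemma ``can be easily proved''), and your direct verification via the orthogonality relations $e_je_k=\delta_{jk}e_j$ is exactly the intended routine computation. All index bookkeeping, including the distinction between $j>i$ and $j\ge i$ and the identification of $f_i$ as the unit of $f_i\Lambda f_i$, is handled correctly.
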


We keep the notations of Lemma \ref{trivial} in the sequel.
\begin{defi}
Let $\Lambda$ be an algebra. A \textit{strongly co-stratifying $n$-chain} of $\Lambda$ is an ordered complete set of  orthogonal idempotents $\{e_1,\dots, e_n\}\subset \Lambda$
such that the ideals provided by the following idempotents
$$f_1 \in \Lambda,\ f_2\in f_{1}\la f_{1},\ \dots\ ,\ f_{i+1}\in f_i\la f_i,\  \dots\ , \ f_{n-1}\in f_{n-2}\la f_{n-2}$$
 are strongly stratifying in their respective algebras.
\end{defi}

\begin{exam}
The bound quiver algebra $\Lambda$ of Example \ref{LIU}
\footnotesize
\[\xymatrix{&&e_2\ar@<.7ex>[dd]^{c}\ar@<-.7ex>[dd]_{b}\\
e_1\ar[rru]^{a}&&&\\
&& e_3\ar[llu]^{d}}\hspace{10pt}\xymatrix{\\ ba=0,~ ad=0,~dc=0.}\]
\normalsize
admits  a  strongly co-stratifying $3$-chain $\{e_1,e_2,e_3\}$. Indeed, consider the filtration
$$0\ \subset \  e_3\Lambda e_3 \ \subset\   (e_2 + e_3)\Lambda (e_2 + e_3)\ \subset\ \Lambda .$$
We know that the idempotent $e_2+e_3$ provides a strongly stratifying ideal in $\Lambda$. Moreover $e_3$ gives trivially a strongly stratifying ideal of the Kronecker algebra  \newline $(e_2 + e_3)\Lambda (e_2 + e_3)$ since the corresponding Morita context is $\begin{pmatrix}
                                              k  &  k\oplus k\\
0 & k
                                            \end{pmatrix}$.
\end{exam}
\begin{defi}
 An \textit{$\H$-strongly co-stratifying $n$-chain} of $\Lambda$ is a strongly co-stratifying $n$-chain $\{e_1,\dots, e_n\}$  such that $e_i\Lambda e_i$ verifies Han's conjecture for all $i$.
\end{defi}
\begin{theo}
Let $\Lambda$ be an algebra which admits an $\H$-strongly co-stratifying $n$-chain.  Then $\Lambda$ verifies Han's conjecture.
\end{theo}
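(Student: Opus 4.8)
The plan is to prove the statement by induction on $n$, mirroring the structure of the proof of Theorem \ref{Han for chains} but using the filtration by subalgebras $f_i\Lambda f_i$ of Lemma \ref{trivial} instead of the filtration by quotient ideals. The base case $n=1$ is immediate: then $f_0=e_1=1$, so $\Lambda=e_1\Lambda e_1$ belongs to the class of algebras for which Han's conjecture is assumed, hence $\Lambda$ verifies Han's conjecture.

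For the inductive step, suppose $n>1$ and that the result holds for algebras admitting an $\H$-strongly co-stratifying $(n-1)$-chain. Given an $\H$-strongly co-stratifying $n$-chain $\{e_1,\dots,e_n\}$ of $\Lambda$, I would first observe that $f_1\Lambda f_1$ carries the $\H$-strongly co-stratifying $(n-1)$-chain $\{e_2,\dots,e_n\}$. Indeed, by Lemma \ref{trivial} the idempotents $f_2,\dots,f_{n-1}$ and the associated ideals inside $f_2\Lambda f_2\subset\cdots\subset f_1\Lambda f_1$ are exactly the data defining a strongly co-stratifying chain for $f_1\Lambda f_1$, and Lemma \ref{trivial} also gives $e_j(f_1\Lambda f_1)e_j=e_j\Lambda e_j$ for $j\geq 2$, so each corner algebra still verifies Han's conjecture. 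By the inductive hypothesis, $f_1\Lambda f_1$ verifies Han's conjecture.

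Next I would apply Theorem \ref{diagonales OK entonces el contexto OK} to the algebra $\Lambda$ with its distinguished idempotent $f_1$: since $\Lambda f_1\Lambda$ is a strongly stratifying ideal, $\Lambda$ verifies Han's conjecture if and only if $f_1\Lambda f_1\times (1-f_1)\Lambda(1-f_1)$ does. Note $1-f_1=e_1$, so the second factor is $e_1\Lambda e_1$, which verifies Han's conjecture by hypothesis. To conclude, suppose $HH_*(f_1\Lambda f_1\times e_1\Lambda e_1)$ is finite; then $HH_*(f_1\Lambda f_1)$ and $HH_*(e_1\Lambda e_1)$ are both finite. Since $e_1\Lambda e_1$ verifies Han's conjecture it has finite global dimension, and since $f_1\Lambda f_1$ verifies Han's conjecture (by the previous paragraph) it too has finite global dimension. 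Therefore the product $f_1\Lambda f_1\times e_1\Lambda e_1$ has finite global dimension, i.e.\ it verifies Han's conjecture, and hence by Theorem \ref{diagonales OK entonces el contexto OK} so does $\Lambda$.

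The only genuinely delicate point is the first paragraph of the inductive step: one must check carefully that the restriction of the chain data to $f_1\Lambda f_1$ is again a \emph{strongly co-stratifying} chain, i.e.\ that the ideals $f_{i+1}(f_i\Lambda f_i)$ are strongly stratifying \emph{in} $f_i\Lambda f_i$ and that the relevant corner algebras are unchanged. This is exactly what Lemma \ref{trivial} is designed to supply ($f_{j}\Lambda f_{j}=f_{j}(f_i\Lambda f_i)f_{j}$ for $j\geq i$, and $e_j(f_i\Lambda f_i)e_j=e_j\Lambda e_j$ for $j>i$), so the obstacle is largely bookkeeping rather than a new idea. Everything else is a direct invocation of Theorem \ref{diagonales OK entonces el contexto OK} together with the additivity $HH_*(A\times B)=HH_*(A)\oplus HH_*(B)$ and the corresponding additivity of global dimension.
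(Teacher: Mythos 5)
Your proof is correct and follows essentially the same route as the paper's: induction on $n$, with the key steps being that Lemma \ref{trivial} shows $\{e_2,\dots,e_n\}$ is an $\H$-strongly co-stratifying $(n-1)$-chain of $f_1\Lambda f_1$, and that Theorem \ref{diagonales OK entonces el contexto OK} applied to the strongly stratifying ideal $\Lambda f_1\Lambda$ reduces the claim to $f_1\Lambda f_1\times e_1\Lambda e_1$. The only difference is cosmetic ordering of the two halves of the inductive step.
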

\begin{proof}  By induction,  let $\Lambda$ be an algebra which admits an $\H$-strongly co-stratifying $n$-chain $\{e_1,\dots, e_n\}$. If $n=1$, then $e_1=1$  and $\la=e_1\Lambda e_1$ verifies Han's conjecture.

For $n>1$, recall that $f_1=e_2+\dots + e_n=1-e_1$.  Since $\Lambda f_1 \Lambda $ is a strongly stratifying ideal of $\Lambda$, by  Theorem \ref{diagonales OK entonces el contexto OK} we have that Han's conjecture holds for $\Lambda$ if and only if it holds  for $f_1\la f_1\times e_1\Lambda e_1$. To verify the latter, suppose that $HH_*( f_1\la f_1\times e_1\Lambda e_1)$ is finite, then $HH_*(f_1\la f_1)$ and $HH_*(e_1\Lambda e_1)$ are finite.  We have that  $e_1\Lambda e_1$  verifies Han's conjecture    thus $e_1\Lambda e_1$ is of finite global dimension.

On the other hand   we assert that $f_1\la f_1$ admits a  $\H$-strongly co-stratifying $(n-1)$-chain $\{e_2,\dots, e_n\}$. First by Lemma \ref{trivial}, for $j\geq 2$ we have $f_j(f_1\la f_1) f_j = f_j\la f_j$. Thus the ideal provided by $f_{j+1}$ in $f_j(f_1\la f_1) f_j$ is strongly stratifying since $\{e_1,\dots, e_n\}$ is a strongly co-stratifying $n$-chain of $\la$. Second, by Lemma \ref{trivial} for $j\geq 2$, we have $e_j(f_1\la f_1) e_j = e_j\la e_j$ and the latter verifies Han's conjecture.

Therefore the inductive hypothesis ensures that $f_1\la f_1$ verifies Han's conjecture,   hence $f_1\la f_1$ is of finite global dimension. We infer that $f_1\la f_1\times e_1\Lambda e_1$ is of finite global dimension, that is Han's conjecture is true for $f_1\la f_1\times e_1\Lambda e_1$.
\qed
\end{proof}

\begin{coro}
  Assume that Han's conjecture holds for local algebras. If an algebra $\la$ admits a co-stratifying chain consisting of primitive idempotents, then Han's conjecture is true for $\la$.
\end{coro}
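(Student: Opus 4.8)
The plan is to reduce the statement immediately to the preceding theorem, which asserts that any algebra admitting an $\H$-strongly co-stratifying $n$-chain verifies Han's conjecture. (Here the chain referred to in the corollary is, in the terminology of the text, a \emph{strongly} co-stratifying $n$-chain, and ``$\H$-strongly co-stratifying'' means precisely that each corner algebra $e_i\la e_i$ satisfies Han's conjecture.) So the only thing that needs to be supplied is the identification of the hypothesis ``the $e_i$ are primitive'' with the hypothesis ``the $e_i\la e_i$ satisfy Han's conjecture'', given that Han's conjecture is assumed for local algebras.

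The one ingredient not already present in the excerpt is the classical fact that, for a finite dimensional algebra $\la$ and an idempotent $e\in\la$, the idempotent $e$ is primitive if and only if the corner algebra $e\la e$ is local (equivalently, $e\la e$ has no idempotents other than $0$ and $e$). I would invoke this from a standard reference on finite dimensional algebras rather than reprove it.

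With that in hand the argument is short: if $\la$ admits a strongly co-stratifying $n$-chain $\{e_1,\dots,e_n\}$ with every $e_i$ primitive, then each $e_i\la e_i$ is a local algebra, hence by the standing hypothesis each $e_i\la e_i$ verifies Han's conjecture; by definition this makes $\{e_1,\dots,e_n\}$ an $\H$-strongly co-stratifying $n$-chain of $\la$. Applying the preceding theorem to $\la$ then yields that $\la$ verifies Han's conjecture. The argument is essentially formal; the only non-bookkeeping point is the passage from primitivity of $e_i$ to locality of $e_i\la e_i$, and this presents no genuine obstacle.
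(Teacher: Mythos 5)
Your proposal is correct and is exactly the intended argument: the paper states this corollary without proof as an immediate consequence of the preceding theorem, and the only ingredient needed is the standard fact that for a finite dimensional algebra an idempotent $e$ is primitive if and only if $e\la e$ is local, which you correctly identify and invoke.
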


\begin{rema}
 As quoted in the Introduction, a comparison between algebras admitting a strongly stratifying or co-stratifying chain with algebras which are standardly stratified will be considered in a forthcoming paper. For the convenience of the reader, we recall the definition of standardly stratified algebras (see for instance \cite{AGOSTON DLAB LUKACS, AGOSTON HAPPEL LUKACS UNGER,  MARCOS MENDOZA SAENZ SANTIAGO, XI}).

 With the same notations as in Lemma \ref{trivial}, recall that  $f_i= e_{i+1}+\dots +e_n$, and let $f_n=0$. Consider the set $\Delta$ of \textit{standard left $\Lambda$-modules} $\Delta_i = \Lambda e_i /   \Lambda f_i\Lambda  e_i $ for $i=1,\dots, n$. As mentioned in \cite{AGOSTON DLAB LUKACS} the module $\Delta_i$ is the largest quotient of $\Lambda e_i$ such that its composition factors are not isomorphic to $(\Lambda / r)e_j$ for $j>i$, where $r$ is the radical of $\Lambda$.

 The algebra $\Lambda$ is \textit{standardly stratified} if it admits a filtration by left submodules which successive quotients belong to $\Delta$, up to isomorphism.
\end{rema}

\section{\sf Patterns for examples of strongly stratifying Morita contexts}
 In the following we provide patterns for obtaining families of strongly stratifying Morita contexts, through assuming  projectivity hypothesis for $M$ and/or $N$.

 \begin{rema}\
 \begin{itemize}
   \item In Example \ref{LIU} from \cite[Example 4.4]{LIU VITORIA YANG},\cite[Example 2.3]{ANGELERI KOENIG LIU YANG}, neither $M$ is projective as a right $A$-module, nor $N$ is projective as a left $A$-module.
   \item In \cite{CIBILSREDONDOSOLOTAR} Morita contexts with $\alpha=\beta=0$ and $M$ and $N$ projective bimodules are considered. In what follows, in general $\alpha\neq 0$. In Proposition \ref{any N}, $N$ is any bimodule and $M$ is a projective bimodule. In Proposition  \ref{Mleftproj Nleftproj}, $M$ and $N$ are left projective modules.
       \item We emphasize that our results for a strongly stratifying Morita context do not depend on the morphism $\alpha$, while $\beta=0$ since its source vector space vanishes. In other words changing $\alpha$ to $\alpha'$ provides in general different Morita contexts, nevertheless the Morita context remains strongly stratifying and the results of this paper still apply.
 \end{itemize}
\end{rema}

\begin{lemm}\label{associative conditions with beta zero}

Let $\Lambda$ be a Morita context  $\begin{pmatrix}
                                              A  & N \\
                                              M & B
                                            \end{pmatrix}_{\alpha, \beta}$ with $\beta=0$.
The associativity conditions \ref{associativity conditions} are equivalent to
$$(\Im \alpha) N=0= M(\Im \alpha). $$

\end{lemm}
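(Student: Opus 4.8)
The plan is to write out the associativity conditions (\ref{associativity conditions}) explicitly in the case $\beta = 0$ and observe that one of the two conditions becomes vacuous while the other becomes exactly the stated identity. Recall that the two conditions read $\alpha(n\otimes m) n' = n \beta(m\otimes n')$ and $\beta(m\otimes n) m' = m\alpha(n\otimes m')$ for all $m, m' \in M$ and $n, n' \in N$. Since $\beta = 0$, the first condition becomes $\alpha(n\otimes m)n' = 0$ for all $n, n' \in N$ and $m \in M$, and the second becomes $0 = m\alpha(n\otimes m')$ for all $m, m' \in M$ and $n \in N$.

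Next I would translate each of these into the asserted vanishing statements. The element $\alpha(n\otimes m)$ ranges over a spanning set of $\Im\alpha \subseteq A$ as $n$ and $m$ vary, so the first condition says precisely that every element of a generating set of $\Im\alpha$ kills $N$ on the left; since $\Im\alpha$ is spanned by such elements and the action of $A$ on $N$ is $k$-bilinear, this is equivalent to $(\Im\alpha)N = 0$. Symmetrically, the second condition says that every generator $\alpha(n\otimes m')$ of $\Im\alpha$ annihilates $M$ on the right, which is equivalent to $M(\Im\alpha) = 0$. This gives the equivalence in both directions: if the associativity conditions hold then both products vanish, and conversely if $(\Im\alpha)N = 0 = M(\Im\alpha)$ then reading the equalities backwards shows (\ref{associativity conditions}) holds (with $\beta = 0$).

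There is essentially no obstacle here; the only point requiring a word of care is the passage between ``$\alpha(n\otimes m)$ acts as zero for all $n,m$'' and ``$\Im\alpha$ acts as zero'', which uses that $\Im\alpha$ is the $k$-linear span of the elements $\alpha(n\otimes m)$ (indeed $\alpha$ is a $k$-linear map on $N\otimes_B M$, whose image is spanned by pure tensors) together with bilinearity of the module structures. I would state this explicitly and then the proof is complete.

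\begin{proof}
Since $\beta = 0$, the associativity conditions (\ref{associativity conditions}) become
$$\alpha(n\otimes m)\, n' = 0 \quad\text{and}\quad m\,\alpha(n\otimes m') = 0$$
for all $m, m' \in M$ and $n, n' \in N$. The subspace $\Im\alpha$ of $A$ is the $k$-linear span of the elements $\alpha(n\otimes m)$ with $n\in N$, $m\in M$, because $\alpha\colon N\otimes_B M \to A$ is $k$-linear and $N\otimes_B M$ is spanned by pure tensors. As the left action of $A$ on $N$ and the right action of $A$ on $M$ are $k$-bilinear, the first condition holds for all $n, n', m$ if and only if $(\Im\alpha)N = 0$, and the second holds for all $m, m', n$ if and only if $M(\Im\alpha) = 0$. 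Hence the associativity conditions are equivalent to $(\Im\alpha)N = 0 = M(\Im\alpha)$.
\qed
\end{proof}
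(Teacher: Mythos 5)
Your proof is correct, and the paper itself omits the proof of this lemma entirely (it is stated as an easy consequence of the definitions); your argument is exactly the intended one: specialize the two associativity conditions to $\beta=0$ and use that $\Im\alpha$ is spanned by the elements $\alpha(n\otimes m)$.
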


\begin{prop}\label{MandN bimodules proyectives}
Let $A$ and $B$ be algebras, $a$, $a'$ be idempotents in $A$ and $b$, $b'$ be idempotents in $B$. Let
$$M=Bb\otimes aA \mbox{ and } N= Aa'\otimes b'B.$$
Let $\alpha: N\otimes_BM\to A$ be a morphism of $A$-bimodules.
There is a  strongly stratifying Morita context $\begin{pmatrix}
                                              A  & N \\
                                              M & B
                                            \end{pmatrix}_{\alpha, \beta}$ if and only if $aAa'=0.$
\end{prop}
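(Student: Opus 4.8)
The claim is that the Morita context $\begin{pmatrix} A & N \\ M & B \end{pmatrix}_{\alpha,\beta}$ with $M = Bb\otimes aA$ and $N = Aa'\otimes b'B$ is strongly stratifying if and only if $aAa' = 0$. By Definition \ref{strongly stratifying MC explicit} I must check three things: (1) $\Tor_n^A(M,N) = 0$ for $n>0$, (2) $M\otimes_A N = 0$, and (3) $\Tor_n^B(N,M) = 0$ for $n>0$. The first step is to observe that as a right $A$-module $M = Bb\otimes_k aA$ is a direct sum of copies of the projective right $A$-module $aA$, hence projective; similarly $N = Aa'\otimes_k b'B$ is projective as a left $A$-module, and also $N$ is projective as a right $B$-module and $M$ is projective as a left $B$-module. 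Consequently conditions (1) and (3) hold automatically for \emph{any} choice of $\alpha$, and the whole statement reduces to the condition $M\otimes_A N = 0$ together with checking that such an $\alpha$ with the associativity conditions actually exists (which is where $\beta = 0$ and Lemma \ref{associative conditions with beta zero} enter).

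\textbf{Key computation.} The heart of the argument is the identification
$$M\otimes_A N = (Bb\otimes_k aA)\otimes_A(Aa'\otimes_k b'B) \cong Bb\otimes_k (aA\otimes_A Aa')\otimes_k b'B \cong Bb\otimes_k aAa'\otimes_k b'B,$$
using that $aA\otimes_A Aa' \cong aAa'$ via the multiplication map. Therefore $M\otimes_A N = 0$ if and only if $Bb\otimes_k aAa'\otimes_k b'B = 0$ if and only if $aAa' = 0$ (assuming $Bb\neq 0$ and $b'B\neq 0$; if either of those vanishes then $M=0$ or $N=0$ and both sides of the equivalence are trivially satisfied, so the statement still holds — I would note this edge case briefly). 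This gives the equivalence between the strongly stratifying property and $aAa'=0$, modulo the existence question for $\alpha$.

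\textbf{Existence of $\alpha$ and the associativity conditions.} Since $\beta$ has source $M\otimes_A N = 0$ when $aAa'=0$, necessarily $\beta = 0$, and by Lemma \ref{associative conditions with beta zero} the associativity conditions reduce to $(\Im\alpha)N = 0 = M(\Im\alpha)$. I would remark that $\alpha = 0$ always works, so a strongly stratifying Morita context with these $M,N$ exists precisely when $aAa'=0$; and conversely if the Morita context in the statement is strongly stratifying then by Remark \ref{esmc is smc} $\beta = 0$, so the only constraint on the data is $M\otimes_A N = 0$, i.e. $aAa'=0$. (One should be slightly careful about what is being quantified: the cleanest reading is that there exists such a context iff $aAa'=0$, and that any $\alpha$ satisfying the associativity conditions automatically gives a strongly stratifying context once $aAa'=0$ holds.)

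\textbf{Main obstacle.} There is no deep obstacle here; the only subtlety is bookkeeping — making sure the projectivity of $M$ and $N$ on the correct one-sided module structures is invoked to kill \emph{all} the Tor groups (so that conditions (1) and (3) are vacuous regardless of $\alpha$), and correctly handling the tensor-cancellation $aA\otimes_A Aa'\cong aAa'$ as well as the trivial corner cases where $b$ or $b'$ act as zero. The argument is essentially a direct unwinding of definitions combined with Lemma \ref{associative conditions with beta zero} and Remark \ref{esmc is smc}.
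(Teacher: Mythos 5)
Your proposal follows the paper's proof almost step for step: projectivity of $M$ and $N$ as one-sided modules kills the higher $\Tor$ groups, and the cancellation $aA\otimes_A Aa'\cong aAa'$ reduces the remaining condition to $aAa'=0$. The one place where you fall short of what the proposition actually asserts is the treatment of $\alpha$. The statement fixes an \emph{arbitrary} $A$-bimodule map $\alpha:N\otimes_BM\to A$ and claims that this specific datum yields a strongly stratifying Morita context iff $aAa'=0$; so for the ``if'' direction one must check that this \emph{given} $\alpha$ satisfies the associativity conditions of Lemma \ref{associative conditions with beta zero}, namely $(\Im\alpha)N=0=M(\Im\alpha)$ --- otherwise the matrix product is not associative and there is no Morita context at all. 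You only observe that $\alpha=0$ works and retreat to an existence reading, but the proposition is applied later with nonzero $\alpha$, so this step cannot be dodged. The missing argument is short: since $N\otimes_BM=Aa'\otimes b'Bb\otimes aA$ and $\alpha$ is an $A$-bimodule map, $\Im\alpha\subseteq Aa'AaA$, whence $(\Im\alpha)N\subseteq Aa'A(aAa')\otimes b'B=0$ and $M(\Im\alpha)\subseteq Bb\otimes (aAa')AaA=0$ once $aAa'=0$. With that supplied, your proof coincides with the paper's.

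A further small point: your parenthetical about the corner cases $Bb=0$ or $b'B=0$ is not quite right. If, say, $Bb=0$, then $M=0$ and the context is automatically strongly stratifying, yet $aAa'$ need not vanish, so the ``only if'' direction would in fact fail there rather than being ``trivially satisfied''. The paper silently assumes this degenerate situation does not occur (in practice $b$ and $b'$ are nonzero idempotents), so this is not held against you, but the way you dispose of the edge case is incorrect as written.
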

\begin{proof}
Note that $M$ and $N$ are projective bimodules, so they are left and right projective. Thus both $\Tor_n^A(M,N)=0$ and
 $\Tor_n^B(N,M)=0$ are zero for $n>0$.
 Also
 $$M\otimes_A N = Bb\otimes aA \otimes_A Aa'\otimes b'B= Bb\otimes aAa'\otimes b'B.$$
 If the Morita context is strongly stratifying then $M\otimes_A N=0$. We infer $aAa'=0$.
 Conversely, if $aAa'=0$, then $M\otimes_A N=0$.   Note that  $N\otimes_B M= Aa'\otimes b'Bb \otimes aA$, so $\Im \alpha\subset Aa'AaA.$ Consequently
 $$(\Im \alpha) N \subset Aa'AaAa'\otimes b'B =0 \mbox{ and } M(\Im \alpha) \subset Bb\otimes aAa'AaA=0.$$
 The associativity conditions of Lemma \ref{associative conditions with beta zero} are satisfied, thus there is a Morita context. \qed
\end{proof}
\begin{rema}
Under the hypothesis of Proposition \ref{MandN bimodules proyectives}
\begin{align*}
\dim_k \Hom_{A-A} (N\otimes_B M, A)= &\dim_k \Hom_{A-A} (Aa'\otimes b'Bb \otimes aA, A)\\=&\dim_k (a'Aa) \dim_k (b'Bb).
\end{align*}
Hence it is possible to choose $\alpha\neq 0$ if and only if $a'Aa\neq 0$ and $b'Bb\neq 0$.
\end{rema}

In the following we provide an example for Proposition \ref{MandN bimodules proyectives}, keeping the same notations.

\begin{exam}
Let $A$ be the algebra of the quiver
\[\begin{tikzcd}
	& {a'} \\
	u & v \\
	& a
	\arrow["x", from=1-2, to=2-1]
	\arrow["y", from=2-1, to=3-2]
	\arrow["{z_1}"', from=3-2, to=2-2]
	\arrow["{z_2}"', from=2-2, to=1-2]
\end{tikzcd}\]
with the relation $yx=0$. Let $B=k$, with $b=b'=1$. We have $M=aA$ and $N=Aa'$. Moreover, $aAa'=0.$
The projective $A$-bimodule $N\otimes_B M$ is $Aa'\otimes aA$. We have
$$\Hom_{A-A}(Aa'\otimes aA, A)=a'Aa = k\{z_2z_1\}.$$
A non-zero $\alpha$ is determined by $\alpha (a'\otimes a)=z_2z_1$. We denote by $m$ and $n$ the generators $a$ and $a'$ of $M$ and $N$ respectively. The strongly stratifying Morita context has a presentation given by the quiver
\[\begin{tikzcd}
	&& {a'} \\
	{b=b'} & u & v \\
	&& a
	\arrow["x", from=1-3, to=2-2]
	\arrow["y", from=2-2, to=3-3]
	\arrow["n", from=2-1, to=1-3]
	\arrow["m", from=3-3, to=2-1]
	\arrow["{z_1}"', from=3-3, to=2-3]
	\arrow["{z_2}"', from=2-3, to=1-3]
\end{tikzcd}\]
and the relations $yx=0$ and $nm=z_2z_1$.

\end{exam}

\begin{exam}
Consider $A$ the algebra of the quiver
\[\begin{tikzcd}
	& {a'} \\
	u \\
	& a
	\arrow["x", from=1-2, to=2-1]
	\arrow["y", from=2-1, to=3-2]
	\arrow["z"', from=3-2, to=1-2]
\end{tikzcd}\]

with the relation $yx=0$. Let $B=k$, with $b=b'=1$. We have $M=aA$ and $N=Aa'$. Moreover, $aAa'=0.$
The projective $A$-bimodule $N\otimes_B M$ is $Aa'\otimes aA$. We have
$$\Hom_{A-A}(Aa'\otimes aA, A)=a'Aa = k{z}.$$
A non-zero $\alpha$ is determined by $\alpha (a'\otimes a)=z$. We denote $m$ and $n$ the generators of $M$ and $N$ respectively. The strongly stratifying Morita context has a presentation given by the quiver
\[\begin{tikzcd}
	&& {a'} \\
	{b=b'} & u \\
	&& a
	\arrow["x", from=1-3, to=2-2]
	\arrow["y", from=2-2, to=3-3]
	\arrow["z"', from=3-3, to=1-3]
	\arrow["n", from=2-1, to=1-3]
	\arrow["m", from=3-3, to=2-1]
\end{tikzcd}\]
and the relations $yx=0$ and $nm=z$. An admissible presentation of this Morita context is given by the quiver
\[\begin{tikzcd}
	&& {a'} \\
	{b=b'} & u \\
	&& a
	\arrow["x", from=1-3, to=2-2]
	\arrow["y", from=2-2, to=3-3]
	\arrow["n", from=2-1, to=1-3]
	\arrow["m", from=3-3, to=2-1]
\end{tikzcd}\]
and the admissible relation $yx=0$.

In other words, this algebra is also a Morita context but relative to an algebra $A'$ instead of $A$.

\end{exam}

The projectivity requirements for $M$ and $N$ can be relaxed as follows.

\begin{prop}\label{any N}
Let $A$ and $B$ be algebras with respective idempotents $a$ and $b$. Let
$M=Bb\otimes aA$, and let $N$ be any $A-B$-bimodule. Let $\alpha: N\otimes_B M\to A$ be an $A$-bimodule map. There is a  strongly stratifying Morita context $\begin{pmatrix}
                                              A  & N \\
                                              M & B
                                            \end{pmatrix}_{\alpha, \beta}$ if and only if $aN=0$ and $a(\Im\alpha)=0.$
\end{prop}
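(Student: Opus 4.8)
The plan is to apply the criterion for a strongly stratifying Morita context, Definition~\ref{strongly stratifying MC explicit}, checking its three conditions, and simultaneously to use Lemma~\ref{associative conditions with beta zero} to verify that the data actually assemble into a Morita context with $\beta=0$. Since $M=Bb\otimes aA$ is a projective $B$-bimodule, it is in particular projective on the left over $B$ and on the right over $A$; hence $\Tor_n^A(M,N)=0$ and $\Tor_n^B(N,M)=0$ automatically for $n>0$, regardless of $N$. Thus the only substantive condition from the definition is $M\otimes_A N=0$.

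First I would compute $M\otimes_A N = Bb\otimes aA\otimes_A N = Bb\otimes aN$. This vanishes if and only if $aN=0$ (using that $Bb\neq 0$; strictly one should note $Bb\otimes_k(-)$ is faithful as a functor on the relevant category, or simply that $aN$ is a direct summand of something whose vanishing is equivalent). So $M\otimes_A N=0 \iff aN=0$, which accounts for one half of the stated condition. Next I would turn to the existence of the Morita context: with $\beta=0$, Lemma~\ref{associative conditions with beta zero} says the associativity conditions reduce to $(\Im\alpha)N=0=M(\Im\alpha)$. Here $N\otimes_B M = N\otimes_B Bb\otimes aA = Nb\otimes aA$, so $\Im\alpha\subseteq A(aA)=AaA$ — more precisely $\Im\alpha$ lies in the image of a map landing in $AaA$, and as an $A$-sub-bimodule generated by elements of the form (something)$\cdot a\cdot$(something) it satisfies $\Im\alpha\subseteq AaA$. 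I would then check $M(\Im\alpha)\subseteq Bb\otimes aA\cdot AaA = Bb\otimes aAaA$; this is automatically contained in $Bb\otimes aA$ and there is no constraint forcing it to vanish unless we impose one, so I would re-examine: in fact $M(\Im\alpha)=0$ should follow once we know $\Im\alpha$ annihilates $M$ on the left appropriately, and the clean hypothesis that captures both associativity relations is $a(\Im\alpha)=0$ together with $aN=0$. Indeed if $a(\Im\alpha)=0$ then $M(\Im\alpha)=Bb\otimes aA(\Im\alpha)$; writing $\Im\alpha\subseteq AaA$ does not immediately give $aA\cdot AaA$ small, but the point is that $\alpha$ itself factors through $N\otimes_B M$ whose image in $A$ is spanned by $\alpha(n\otimes b\otimes x)=\alpha(nb\otimes x)$ with the $aA$-component on the right, so $A(\Im\alpha)\subseteq AaA$ and left-multiplying the generators of $M$, which live in $aA$, we only need $a\cdot(\Im\alpha)$, giving $M(\Im\alpha)\subseteq Bb\otimes a(\Im\alpha)\cdot(\text{stuff})=0$. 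Similarly $(\Im\alpha)N\subseteq (\Im\alpha)N$ and since $\Im\alpha\subseteq AaA$, we get $(\Im\alpha)N\subseteq AaAN$, and $aN=0$ forces $aAN$... this needs $AaN=0$, which follows from $aN=0$ only after noting $\Im\alpha$ lands in the $a$-th column; I would make this precise by tracking $\alpha(n\otimes b\otimes x)r = \alpha(n\otimes b\otimes xr)$ so $(\Im\alpha)$ absorbs right multiplication and its elements are of the shape $\sum \lambda_i a \mu_i$; then $(\Im\alpha)N$ is spanned by $\lambda a\mu n$, and since we may move $\mu n\in N$ and only $a$ acts, $aN=0$ kills it. For the converse, if the Morita context is strongly stratifying then $M\otimes_A N=0$ forces $aN=0$ as shown, and the associativity condition $M(\Im\alpha)=0$ unwound gives $Bb\otimes a(\Im\alpha)(\cdots)=0$, i.e. $a(\Im\alpha)=0$.

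The main obstacle I anticipate is the bookkeeping in the associativity/Morita-context part: one must correctly identify $N\otimes_B M$, locate $\Im\alpha$ inside $A$ (it sits in $AaA$, in fact the generators absorb left- and right-multiplication in a specific columnar way), and then verify that $(\Im\alpha)N=0=M(\Im\alpha)$ is \emph{equivalent} to the pair $aN=0$ and $a(\Im\alpha)=0$ rather than merely implied by it. This is the delicate direction, since one needs that the conditions on $\alpha$ and on $N$ are not just sufficient but necessary, which requires choosing test elements (e.g. the canonical generator $b\otimes a\in M$) to extract the constraints. The $\Tor$-vanishing is immediate from projectivity of $M$ as a bimodule, so that part contributes nothing hard; the entire content is the interplay between $aN=0$, $a(\Im\alpha)=0$, and the two associativity identities, exactly as recorded in Lemma~\ref{associative conditions with beta zero}. \qed
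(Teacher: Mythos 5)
Your proposal is correct and follows essentially the same route as the paper's proof: the $\Tor$ conditions are automatic from the bimodule-projectivity of $M$, the identification $M\otimes_A N=Bb\otimes aN$ gives the equivalence with $aN=0$, and Lemma \ref{associative conditions with beta zero} reduces the existence of the Morita context to $(\Im\alpha)N=0=M(\Im\alpha)$, which you correctly show is equivalent (given $aN=0$) to $a(\Im\alpha)=0$. Despite the hedging mid-argument, your bookkeeping is if anything slightly more careful than the paper's, which asserts $\Im\alpha\subset Aa$ where only $\Im\alpha\subset AaA$ holds in general; the argument survives exactly as you note, since $(\Im\alpha)N\subset AaAN=Aa(AN)=AaN=0$ and $A\,\Im\alpha=\Im\alpha$.
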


\begin{proof}
For $n>0$ we have  $\Tor_n^A(M,N)=0$ and $\Tor_n^B(N,M)=0$. Moreover $M\otimes_A N= Bb\otimes aN$, hence $M\otimes_A N=0$ if and only if $aN=0$.
Also, $N\otimes_B M= Nb\otimes aA$, hence $\Im\alpha \subset Aa$.

If the Morita context is strongly stratifying then $M\otimes_A N= 0$, whence $aN=0$.
By Lemma \ref{associative conditions with beta zero}, $M(\Im\alpha)=0$, that is $Bb\otimes Aa(\Im\alpha )=0$ which is equivalent to $a(\Im\alpha) =0$.

For the converse, it remains to prove that $(\Im\alpha) N=0$ in order to satisfy the conditions of Lemma \ref{associative conditions with beta zero}. We have that $\Im\alpha \subset Aa$. Hence
$$\Im\alpha N\subset AaN =0.$$\qed
\end{proof}
\begin{prop}\label{Mleftproj Nleftproj}
Let $A$ and $B$ be algebras with respective idempotents $a$ and $b$. Let $M'\neq 0$ be a right $A$-module and
$M=Bb\otimes M'$. Let $N'\neq 0$ be a right $B$-module and $N= Aa\otimes N'$. Let $\alpha: N\otimes_B M\to A$ be an $A$-bimodule map. There is a  strongly stratifying Morita context $\begin{pmatrix}
                                              A  & N \\
                                              M & B
                                            \end{pmatrix}_{\alpha, \beta}$ if and only if $M'a=0$ and $(\Im\alpha)a=0.$
\end{prop}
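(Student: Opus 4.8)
The plan is to proceed exactly as in Propositions \ref{MandN bimodules proyectives} and \ref{any N}: since $M\otimes_A N = 0$ forces $\beta = 0$ (Remark \ref{esmc is smc}), I would analyse the three conditions of Definition \ref{exigent stratifying MC} together with the associativity constraints of Lemma \ref{associative conditions with beta zero}, and show that they collapse to the two stated equalities $M'a = 0$ and $(\Im\alpha)a = 0$.

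First I would dispose of the $\Tor$ conditions. As $N = Aa\otimes N'$ is a direct sum of copies of the projective left $A$-module $Aa$, it is left $A$-projective, so $\Tor_n^A(M,N) = 0$ for $n>0$; symmetrically $M = Bb\otimes M'$ is left $B$-projective, so $\Tor_n^B(N,M) = 0$ for $n>0$. Hence a strongly stratifying Morita context on this data (necessarily with $\beta = 0$) exists if and only if $M\otimes_A N = 0$ together with the associativity identities $(\Im\alpha)N = 0 = M(\Im\alpha)$. Using the isomorphisms $M'\otimes_A Aa\cong M'a$ and $N'\otimes_B Bb\cong N'b$ I would compute $M\otimes_A N\cong Bb\otimes_k M'a\otimes_k N'$, so that (as $Bb\ne 0$, $N'\ne 0$) the vanishing $M\otimes_A N = 0$ is equivalent to $M'a = 0$; and likewise $N\otimes_B M\cong Aa\otimes_k N'b\otimes_k M'$.

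Next I would locate $\Im\alpha$. Since $a$ is idempotent and $\alpha$ is left $A$-linear, from $a\cdot(a\otimes\nu\otimes\mu) = a\otimes\nu\otimes\mu$ one gets $\alpha(a\otimes\nu\otimes\mu) = a\,\alpha(a\otimes\nu\otimes\mu)\in aA$, and as these elements generate $N\otimes_B M$ as a left $A$-module this yields $\Im\alpha\subseteq AaA$. Then $(\Im\alpha)N\cong ((\Im\alpha)a)\otimes_k N'$ (using that $\Im\alpha$ is a two-sided ideal), which vanishes iff $(\Im\alpha)a = 0$; while $M(\Im\alpha)\cong Bb\otimes_k(M'\,\Im\alpha)$ with $M'\,\Im\alpha\subseteq M'AaA\subseteq (M'a)A$, so $M'a = 0$ already forces $M(\Im\alpha) = 0$. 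Combining, such a strongly stratifying Morita context exists iff $M'a = 0$ and $(\Im\alpha)a = 0$; the ``if'' direction is then immediate, since those two equalities give $M\otimes_A N = 0$, $(\Im\alpha)N = 0$ and $M(\Im\alpha) = 0$, so Lemma \ref{associative conditions with beta zero} produces a genuine Morita context which, by the $\Tor$ remarks above, is strongly stratifying.

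The only step I expect to require care is the inclusion $\Im\alpha\subseteq AaA$, on which the ``automatic'' vanishing of $M(\Im\alpha)$ rests; this is exactly where the idempotency of $a$ and the shape $N = Aa\otimes N'$ enter. The remainder is routine manipulation of tensor products over $k$, together with the standard projectivity facts for $Aa$ and $Bb$. As in the preceding propositions, I would also record explicitly that the hypotheses $M'\ne 0$, $N'\ne 0$ (and $a,b\ne 0$) are what make the ``only if'' directions of the equivalences $M\otimes_A N = 0\Leftrightarrow M'a = 0$ and $(\Im\alpha)N = 0\Leftrightarrow(\Im\alpha)a = 0$ go through.
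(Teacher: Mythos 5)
Your proof is correct and follows essentially the same route as the paper's: projectivity of $Aa\otimes N'$ and $Bb\otimes M'$ disposes of the higher $\Tor$'s, the explicit computations $M\otimes_A N= Bb\otimes M'a\otimes N'$ and $N\otimes_B M= Aa\otimes N'b\otimes M'$ reduce everything to the two stated equalities, and Lemma \ref{associative conditions with beta zero} handles the associativity constraints. Your localisation $\Im\alpha\subseteq AaA$ is in fact slightly more precise than the paper's stated $\Im\alpha\subset aA$, but either containment yields $M'(\Im\alpha)\subseteq (M'a)A=0$, so the converse goes through identically.
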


\begin{proof}
We have that
\begin{itemize}
  \item $\Tor_n^A(M,N)=0$ and $\Tor_n^B(N,M)=0$ for $n>0$,
  \item $M\otimes_A N= Bb\otimes M'a \otimes N'$, whence $M\otimes_A N=0$ if and only if $M'a=0$,
  \item $N\otimes_B M= Aa\otimes N'b \otimes M'$, whence $\Im \alpha \subset aA$.
\end{itemize}

If the Morita context is strongly stratifying then $M'a=0$.
By Lemma \ref{associative conditions with beta zero}, $(\Im\alpha)N=0$, that is $(\Im\alpha)Aa\otimes N'=0$ which is equivalent to $ (\Im\alpha)a=0$.

For the converse, it remains to prove that $M(\Im\alpha)=0$. We have that $(\Im\alpha) \subset aA$. Hence
$$M(\Im\alpha) = Bb\otimes M' (\Im\alpha) \subset Bb\otimes M'aA =0.$$\qed

\end{proof}

\footnotesize
\noindent C.C.:\\
Institut Montpelli\'{e}rain Alexander Grothendieck, CNRS, Univ. Montpellier, France.\\
{\tt Claude.Cibils@umontpellier.fr}

\medskip
\noindent M.L.:\\
Instituto de Matem\'atica y Estad\'\i stica  ``Rafael Laguardia'', Facultad de Ingenier\'\i a, Universidad de la Rep\'ublica, Uruguay.\\
{\tt marclan@fing.edu.uy}

\medskip
\noindent E.N.M.:\\
Departamento de Matem\'atica, IME-USP, Universidade de S\~ao Paulo, Brazil.\\
{\tt enmarcos@ime.usp.br}

\medskip
\noindent A.S.:
\\IMAS-CONICET y Departamento de Matem\'atica,
Facultad de Ciencias Exactas y Naturales,\\
Universidad de Buenos Aires, Argentina. \\{\tt asolotar@dm.uba.ar}


\begin{thebibliography}{99}

\bibitem{AGOSTON DLAB LUKACS} \'{A}goston, I.; Dlab, V.; Luk\'{a}cs, E.
Approximations of algebras by standardly stratified algebras. J. Algebra 319 (2008), 4177--4198.

\bibitem{AGOSTON HAPPEL LUKACS UNGER} \'{A}goston, I.; Happel, D.; Luk\'{a}cs, E.; Unger, L.
Standardly stratified algebras and tilting. J. Algebra 226 (2000), 144--160.

\bibitem{ANDRE} Andr\'{e}, M.  Homologie des alg\`{e}bres commutatives. Die Grundlehren der mathematischen Wissenschaften (In Einzeldarstellungen mit besonderer Ber\"{u}cksichtigung der Anwendungsgebiete), vol 206. Springer, Berlin, Heidelberg, 1974.

    \bibitem{ANGELERI HUGEL KOENIG LIU YANG CR 2011} Angeleri H\"{u}gel, L.; Koenig, S.; Liu, Q.; Yang, D.  Stratifying derived module categories   C. R. Acad. Sci. Paris, Ser. I 349 (2011), 1139--1144

  \bibitem{ANGELERI KOENIG LIU YANG 2017 472}   Angeleri H\"{u}gel, L.; Koenig, S.; Liu, Q.; Yang, D.  Ladders and simplicity of derived module categories.   J. Algebra 472 (2017), 15--66.

      \bibitem{ANGELERI KOENIG LIU YANG} Angeleri H\"{u}gel, L.; Koenig, S.; Liu, Q.; Yang, D. Recollements and stratifying ideals. J. Algebra 484 (2017), 47--65.

\bibitem{AUSLANDER PLATZECK TODOROV} Auslander, M.; Platzeck, M. I.; Todorov, G. Homological theory of idempotent ideals. Trans. Amer. Math. Soc. 332 (1992), 667--692.



\bibitem{AVRAMOVVIGUE} Avramov, L. L.;  Vigu\'{e}-Poirrier M. Hochschild homology criteria for smoothness. Internat. Math. Res. Notices (1992), 17--25.

\bibitem{BASS} Bass, H. The Morita theorems, mimeographed notes, University of Oregon, (1962), 21 pp.  {\tt http://categorified.net/Bass-MoritaTheorems.pdf}

\bibitem{BEILINSON BERNSTEIN DELIGNE}    Beilinson, A. A.; Bernstein, J.; Deligne, P.
Faisceaux pervers. Analysis and topology on singular spaces, I (Luminy, 1981), 5--171,
Ast\'{e}risque, 100, Soc. Math. France, Paris, 1982.

    \bibitem{BERGHERDMANN} Bergh, P. A.; Erdmann, K. Homology and cohomology of quantum complete intersections. Algebra Number Theory 2 (2008),  501--522.

\bibitem{BERGHMADSEN2009} Bergh, P. A.; Madsen, D. Hochschild homology and global dimension. Bull. Lond. Math. Soc. 41 (2009),  473--482.

\bibitem{BERGHMADSEN2017} Bergh, P. A.; Madsen, D. Hochschild homology and trivial extension algebras. Proc. Amer. Math. Soc. 145 (2017), 1475--1480.

\bibitem{BACH} Buenos Aires Cyclic Homology Group. Cyclic homology of algebras with one generator. (J.A.
Guccione, J. J. Guccione, M. J. Redondo, A. Solotar and O. Villamayor participated in this
research) K-Theory 5 (1991), 51--69.

\bibitem{BUCHWEITZ} Buchweitz, R.-O. Morita contexts, idempotents, and Hochschild cohomology - with applications to invariant rings. Contemp. Math., 331, Amer. Math. Soc., Providence, RI, 2003.

\bibitem{CARTANEILENBERG} Cartan, H.; Eilenberg, S. Homological algebra. Princeton University Press, Princeton, N. J., 1956.

\bibitem{CLMS2019} Cibils, C.; Lanzilotta, M.; Marcos, E.N.; Solotar, A. Hochschild cohomology of algebras arising from categories and from bounded quivers. J. Noncommut. Geom. 13  (2019), 1011--1053.

    \bibitem{CLMS Pacific} Cibils, C.; Lanzilotta, M.; Marcos, E.N.; Solotar, A. Split bounded extension algebras and Han's conjecture.  Pacific J. Math. 307 (2020), 63--77. Erratum to the article Split bounded extension algebras and Han's conjecture. Pacific J. Math. 318 (2022), 229--231.

\bibitem{CLMS2021nearlyexactJZ} Cibils, C.; Lanzilotta, M.; Marcos, E.N.; Solotar, A. Jacobi-Zariski long nearly exact sequences for associative algebras.  Bull. Lond. Math. Soc. 53 (2021), 1636--1650. Corrigendum: Jacobi-Zariski long nearly exact sequences for associative algebras. Bull. Lond. Math. Soc. 53 (2021), 1651--1652.




\bibitem{CLMS2022bounded}  Cibils, C.; Lanzilotta, M.; Marcos, E.N.; Solotar, A. Han's conjecture for bounded extensions. J. Algebra 598 (2022), 48--67.

\bibitem{CIBILSMARCOS2006} Cibils, C.; Marcos, E.N. Skew category, Galois covering and smash product of a k-category. Proc. Amer. Math. Soc. 134 (2006), 39--50.

\bibitem{CIBILSREDONDOSOLOTAR}Cibils, C.; Redondo, M. J.; Solotar, A.
Han's conjecture and Hochschild homology for null-square projective algebras. Indiana Univ. Math. J. 70 (2021) 639--668.

 \bibitem{CLINE PARSHALL SCOTT}    Cline, E.; Parshall, B.; Scott, L. Stratifying endomorphism algebras. Mem. Amer. Math. Soc. 591 (1996), 1--119.


    \bibitem{COHN}  Cohn, P. M. Morita equivalence and duality. Reprinting of the 1966 edition. Queen Mary College Mathematics Notes. Queen Mary College, London, 1976.

\bibitem{DLAB RINGEL 1989} Dlab, V.; Ringel, C. M. Quasi-hereditary algebras.
Illinois J. Math. 33 (1989)  280--291.

\bibitem{GREEN PSAROUDAKIS} Green, E. L.; Psaroudakis, C. On Artin algebras arising from Morita contexts.
Algebr. Represent. Theory 17 (2014), 1485--1525.



\bibitem{HAN2006} Han, Y. Hochschild (co)homology dimension. J. London Math. Soc. 73 (2006), 657--668.

\bibitem{HAN2014} Han, Y. Recollements and Hochschild theory.  J. Algebra 397 (2014), 535--547.

    \bibitem{HERMANN2016} Hermann, R. Homological epimorphisms, recollements and Hochschild cohomology - with a conjecture by Snashall-Solberg in view.  Adv. Math. 299 (2016), 687--759.

\bibitem{HOCHSCHILD1945}  Hochschild, G. On the cohomology groups of an associative algebra, Ann. Math. 46 (1945), 58--67.

\bibitem{HOCHSCHILD1956}  Hochschild, G.    Relative homological algebra, Trans. Amer. Math. Soc. 82 (1956), 246--269.

\bibitem{Iyengar}  Iyengar, S. Andr\'e-Quillen homology of commutative algebras. Interactions between homotopy theory and algebra, 203--234. Contemporary Mathematics, 436. American Mathematical Society, Providence, RI, 2007.

\bibitem{JACOBSON}  Jacobson, N. Basic algebra. II. Second edition. W. H. Freeman and Company, New York, 1989

\bibitem{KAYGUN} Kaygun, A. Jacobi-Zariski exact sequence for Hochschild homology and cyclic (co)homology, Homology Homotopy Appl. 14 (2012), 65--78.
Erratum to ``Jacobi-Zariski exact sequence for Hochschild homology and cyclic (co)homology''
Homology Homotopy Appl. 21 (2019), 301--303.

\bibitem{KELLER1998}  Keller, B. Invariance and localization for cyclic homology of DG algebras, J. Pure Appl. Algebra 123 (1998) 223--273.

\bibitem{KOENIG NAGASE} Koenig, S.; Nagase, H. Hochschild cohomology and stratifying ideals.
J. Pure Appl. Algebra 213 (2009), 886--891.

\bibitem{LIU VITORIA YANG} Liu, Q.; Vit\'{o}ria, J.; Yang, D. Gluing silting objects, Nagoya Math. J. 216 (2014), 117--151.

\bibitem{GAO PSAROUDAKIS} Gao, N.; Psaroudakis, C. Gorenstein Homological Aspects of Monomorphism
Categories via Morita Rings,   Algebr. Represent. Theory 20 (2017) 487--529.

\bibitem{MARCOS MENDOZA SAENZ SANTIAGO} Marcos, E.; Mendoza, O.; S\'{a}enz, C.; Santiago, V.
Standardly stratified lower triangular K-algebras with enough idempotents. J. Pure Appl. Algebra 227 (2023), no. 5.

\bibitem{SOLOTARSUAREZVIVAS} Solotar, A.; Su\'{a}rez-Alvarez, M.; Vivas, Q. Hochschild homology and cohomology of generalized Weyl algebras: the quantum case. Ann. Inst. Fourier (Grenoble) 63 (2013), 923--956.

\bibitem{SOLOTARVIGUE} Solotar, A.; Vigu\'{e}-Poirrier M. Two classes of algebras with infinite Hochschild homology, Proc. Amer. Math. Soc. 138 (2010), 861--869.

\bibitem{WEIBEL} Weibel, C. A. An introduction to homological algebra. Cambridge Studies in Advanced Mathematics, 38. Cambridge University Press, Cambridge, 1994.

\bibitem{WITHERSPOON} Witherspoon, S. Hochschild cohomology for algebras.
Bull. Am. Math. Soc., New Ser. 58, No. 1, 2021.

\bibitem{XI} Xi, C.
Standardly stratified algebras and cellular algebras.
Math. Proc. Cambridge Philos. Soc. 133 (2002), 37--53.

\end{thebibliography}
\end{document}